\definecolor{karin}{RGB}{0,100,200}
\definecolor{laertis}{RGB}{200,0,100}
\definecolor{mads}{RGB}{11,113,147}
\newtheorem{lem}{Lemma}[section]
\newtheorem{prop}[lem]{Proposition}
\newtheorem{cor}[lem]{Corollary}
\newtheorem{thm}[lem]{Theorem}
\theoremstyle{definition}
\newtheorem{defin}[lem]{Definition}
\newtheorem{remark}[lem]{Remark}
\newtheorem{setup}{Setup}
\newtheorem{question}{Question}
\newtheorem{example}[lem]{Example}
\numberwithin{equation}{section}
\newcommand{\isom}{\simeq}
\newcommand{\comp}{\circ}
\newcommand{\graph}[1]{\overline{#1}}
\DeclareMathOperator{\charac}{char}
\DeclareMathOperator{\CP}{\chi}
\DeclareMathOperator{\rad}{rad}
\DeclareMathOperator{\soc}{soc}
\DeclareMathOperator{\Thick}{Thick}
\DeclareMathOperator{\gldim}{gldim}
\DeclareMathOperator{\pdim}{pdim}
\DeclareMathOperator{\idim}{idim}
\DeclareMathOperator{\Ext}{Ext}
\DeclareMathOperator{\gr}{gr}
\DeclareMathOperator{\level}{lvl}
\DeclareMathOperator{\maxlevel}{max.lvl}
\DeclareMathOperator{\minlevel}{min.lvl}
\DeclareMathOperator{\add}{add}
\DeclareMathOperator{\I}{I}
\DeclareMathOperator{\J}{J}
\DeclareMathOperator{\Htpycat}{K}
\DeclareMathOperator{\Hom}{Hom}
\DeclareMathOperator{\End}{End}
\DeclareMathOperator{\modu}{mod}
\DeclareMathOperator{\db}{D^b}
\DeclareMathOperator{\derived}{\db(\modu \Lambda)}
\DeclareMathOperator{\topu}{top}
\DeclareMathOperator{\op}{op}
\DeclareMathOperator{\D}{D}
\DeclareMathOperator{\Img}{Im}
\DeclareMathOperator{\dimv}{\textbf{dim}}
\DeclareMathOperator{\K}{\mathbf{k}}
\DeclareMathOperator{\CC}{\mathbb{C}}
\DeclareMathOperator{\ZZ}{\mathbb{Z}}
\DeclareMathOperator{\QQ}{\mathbb{Q}}
\DeclareMathOperator{\RR}{\mathbb{R}}
\DeclareMathOperator{\proj}{proj}
\DeclarePairedDelimiter\abs{\lvert}{\rvert}%
\begin{document}

\title[\resizebox{5.5in}{!}{Higher homological algebra for one-point extensions of bipartite hereditary algebras and spectral graph theory}]{Higher homological algebra for one-point extensions of bipartite hereditary algebras and spectral graph theory}

\author[K. M. Jacobsen]{Karin M. Jacobsen}
\address{Department of mathematics, Aarhus University, Aarhus, Denmark}
\email{karin.jacobsen@math.au.dk}
\author[M. H. Sand\o y]{Mads Hustad Sand\o y}
\address{Department of mathematical sciences, NTNU, Trondheim, Norway}
\email{mads.sandoy@ntnu.no}
\author[L. Vaso]{Laertis Vaso}
\address{Department of mathematical sciences, NTNU, Trondheim, Norway}
\email{laertis.vaso@ntnu.no}
\subjclass{16G20, 16E65, 16S37, 05C25}

\begin{abstract}
    In this article we study higher homological properties of $n$-levelled algebras and connect them to properties of the underlying graphs. Notably, to each $2$-representation-finite quadratic monomial algebra $\Lambda$ we associate a bipartite graph $\graph{B_{\Lambda}}$ and we classify all such algebras $\Lambda$ for which $\graph{B_{\Lambda}}$ is regular or edge-transitive. We also show that if $\graph{B_{\Lambda}}$ is semi-regular, then it is a reflexive graph.
\end{abstract}

\maketitle

\setcounter{section}{0}

\section{Introduction}
\label{Section:Introduction}
\subsubsection*{Background} A celebrated result of Gabriel classifies hereditary representation-finite algebras over an algebraically closed field $\K$ by the Dynkin diagrams \cite{GAB72}. The representation theory of these algebras is completely understood and can be described using Auslander--Reiten theory, see for example \cite{ARS97, ASS06}. 

In 2004 Iyama introduced an $n$-dimensional version of Auslander--Reiten theory where $n$ is a positive integer \cite{Iya07b, ARtheoryrevisited}. By applying this theory one obtains the class of $n$-representation-finite algebras that can be seen as certain generalisations of hereditary representation-finite algebras. Examples of $n$-representation-finite algebras include higher Auslander algebras of type $A$ \cite{Iya11} and algebras of the form $\Lambda_1\otimes\cdots\otimes\Lambda_k$ where each $\Lambda_i$ is an $\ell$-homogeneous $n_i$-representation-finite algebra over a perfect field $\K$ \cite[Corollary 1.5]{HerschendIyama10}.

Although higher dimensional Auslander--Reiten theory has found applications in fields as diverse as algebraic geometry \cite{IW, IW2, IW3, HIMO, JKM}, combinatorics \cite{OT,HJ, Wil2022}, higher category theory and algebraic K-theory \cite{DJW} and symplectic geometry \cite{DJL, Ded23}, and although many classical results from representation theory have been generalised in this setting \cite{Iya07a,HIO14, Jor2014}, it seems that an explicit classification of $n$-representation-finite algebras for $n>1$ is currently out of reach. However, some more specialised versions of this question have been adressed by imposing additional constrains, e.g. \cite{DI20, VASradsquarezero, HJS22}. In particular, in \cite{HI11} Herschend and Iyama show that all basic $2$-representation-finite algebras are described using cuts of selfinjective quivers with potential and examples of such nature are given in \cite{Pas20, Pet19}. 

In a similar manner the authors of \cite{ST24} tried to classify $n$-representation-finite quadratic monomial bound quiver algebras. They gave a complete classification when $n>2$ and in the case $n=2$ they showed that if a quadratic monomial bound quiver algebra $\Lambda=\K Q/\I$ is $2$-representation-finite, then the quiver $Q$ is a \emph{star quiver $S_{(r,s)}$} given by
\begin{equation}\label{eq:star quiver}
\begin{tikzpicture}[scale=0.8, transform shape, baseline={(current bounding box.center)}]]
    \node (v1) at (-0.5,1.75) {$x_1$};
    \node (vr) at (-0.5,0.25) {$x_r$};    
    \node (u1) at (2.5,1.75) {$y_1$};  
    \node (us) at (2.5,0.25) {$y_s$.};               \node (z) at (1,1) {$z$};
    
    \draw[loosely dotted] (-0.5,0.7) to (-0.5,1.3);  \draw[loosely dotted]  (2.5,0.7) to (2.5,1.3);
    \draw[->] (v1) to node[above] {$a_1$}  (z);
    \draw[->] (vr) to node[below] {$a_r$}  (z);
    \draw[->] (z) to node[above] {$b_1$}  (u1);
    \draw[->] (z) to node[below] {$b_s$}  (us);
\end{tikzpicture}
\end{equation}

\subsubsection*{Main results} One aim of this article is to continue the classification in \cite{ST24} by studying which of these algebras are indeed $2$-representation-finite. Note that the data defining a quadratic monomial bound quiver algebra $\Lambda=\K S_{(r,s)}/\I$ is equivalent to that of a quiver $B_{\Lambda}$ with vertices $\{x_1,\ldots,x_r,y_1,\ldots,y_s\}$ and arrows $\beta_{ji}:y_j\to x_i$ whenever $a_i b_j\not\in\I$. Note that the underlying graph $\overline{B_{\Lambda}}$ is bipartite with partition given by $\{x_1,\ldots,x_r\}$ and $\{y_1,\ldots,y_s\}$; if all of the vertices $x_1,\ldots,x_r$ have the same degree $\Sigma_1$ and all of the vertices $y_1,\ldots,y_s$ have the same degree $\Sigma_2$, then $\overline{B_{\Lambda}}$ his called \emph{semi-regular of bidegree $(\Sigma_1,\Sigma_2)$}. Our main result is the following.
    
\begin{thm}[Corollary \ref{cor:2-RF semi-regular star algebra}]\label{thm:2-rep-fin semiregular}
    Let $\Lambda=\K S_{(r,s)}$ be a $2$-representation-finite $(r,s)$-star algebra. If the underlying graph $\graph{B_{\Lambda}}$ of $B_{\Lambda}$ is semi-regular of bidegree $(\Sigma_1,\Sigma_2)$, then 
    \begin{enumerate}
        \item[(a)] $r\Sigma_1 = s\Sigma_2 $, and
        \item[(b)] $\Sigma_1\Sigma_2+r+s-r\Sigma_1 \in\{1,2,3,4\}$.
    \end{enumerate}
\end{thm}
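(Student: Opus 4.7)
Part (a) is a handshaking argument for the bipartite graph $\graph{B_{\Lambda}}$. Counting edges in two ways, using that every vertex $x_i$ has degree $\Sigma_1$ and every vertex $y_j$ has degree $\Sigma_2$, yields $r\Sigma_1 = \sum_{i=1}^r \deg(x_i) = \sum_{j=1}^s \deg(y_j) = s\Sigma_2$.

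Part (b) is the genuine content. My plan is to interpret the quantity $\Sigma_1\Sigma_2 + r + s - r\Sigma_1$ as a representation-theoretic invariant of $\Lambda$ whose value is forced into a short list by the $2$-representation-finite hypothesis. Writing $m = r\Sigma_1 = s\Sigma_2$, I rewrite the quantity as $\Sigma_1\Sigma_2 + (r+s) - m$, where $r+s$ is the number of vertices and $m$ the number of edges of $\graph{B_{\Lambda}}$, while $\Sigma_1\Sigma_2$ is the largest eigenvalue of the adjacency operator of the semi-regular bipartite graph. I would first use the explicit structure of $\Lambda = \K S_{(r,s)}/\I$ to write down the Cartan matrix $C_{\Lambda}$: the projective $P_z$ has composition factors $S_z$ together with the $r$ simples $S_{x_i}$; the projective $P_{y_j}$ has $S_{y_j}$ on top, $S_z$ in its radical, and the $\Sigma_2$ simples $S_{x_i}$ corresponding to arrows $a_i b_j \notin \I$. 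Once this is in hand, I expect either $\det C_{\Lambda}$, or a distinguished value attached to the Coxeter matrix of $\Lambda$, to equal the expression in question, so that the theorem's bound reduces to the known short-list constraint for $2$-representation-finite algebras.

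The main obstacle I anticipate is the identification step. The set $\{1,2,3,4\}$ is reminiscent of Dynkin-type constraints, which suggests that the right interpretation of $\Sigma_1\Sigma_2 + r + s - r\Sigma_1$ is either as the Cartan determinant of $\Lambda$ or as the order of a small torsion invariant attached to its derived category. Carrying out the bookkeeping to identify it precisely, while tracking the combinatorial contributions of the monomial relations $a_i b_j \in \I$, is the main computational task. The general results on $n$-levelled algebras established earlier in the paper almost certainly supply the tool that makes this identification clean, reducing the corollary to an application of the general theory combined with the semi-regularity assumption, which ensures that all contributions can be expressed purely in terms of $(r,s,\Sigma_1,\Sigma_2)$.
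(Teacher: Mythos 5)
Part (a) is fine and is exactly the paper's argument (edge counting in the semi-regular bipartite graph $\graph{B_{\Lambda}}$). Part (b), however, is only a plan, and the plan has a gap precisely at the step you yourself flag as the main obstacle: the identification of $\Sigma_1\Sigma_2+r+s-r\Sigma_1$ with a representation-theoretic invariant. Your first candidate, $\det C_{\Lambda}$, cannot work: the quiver $S_{(r,s)}$ is acyclic, so $C_{\Lambda}$ is unitriangular and $\det C_{\Lambda}=1$ for every choice of $\I$; it carries none of the information. Your second candidate (``a distinguished value attached to the Coxeter matrix'') points in the right direction but is never made precise, and there is no off-the-shelf ``short-list constraint for $2$-representation-finite algebras'' to reduce to. What the paper actually does is: (i) replace $\Lambda$ by the algebra $\Gamma_{\Lambda}=\K B_{\Lambda}[M]$, a one-point extension of the bipartite hereditary algebra $\K B_{\Lambda}$ by the module $M$ with dimension vector $(1,\dots,1)^T$, using $T(\Lambda)\isom T(\Gamma_{\Lambda})$ and the theorem of \cite{CDIM20} to transport the twisted fractionally Calabi--Yau property (which $\Lambda$ has by Theorem \ref{thm:twisted fractionally CY and n-RF}(a)); (ii) use semi-regularity to see that $(d_x,d_y)=(\mathbf{1},\mathbf{1})$ is a bi-eigenvector of $A(\graph{B_{\Lambda}})$ with bi-eigenvalue $(\Sigma_1,\Sigma_2)$, which produces a small invariant subspace for the Coxeter matrix of $\Gamma_{\Lambda}$ on which its characteristic polynomial has a quadratic factor determined by $p=\Sigma_1\Sigma_2+|d_x|^2+|d_y|^2-|d_x|^2\Sigma_1$; (iii) invoke Proposition \ref{prop:fractionally Calabi--Yau implies eigenvalues of Coxeter matrix are in unit circle} to force the roots of that factor onto the unit circle, which for an integer $p$ gives $p\in\{0,1,2,3,4\}$; and (iv) rule out $p=0$ by a separate argument about the order of the Coxeter matrix. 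Steps (i), (ii) and (iv) are entirely absent from your proposal, and without (i) and (ii) the ``bookkeeping'' you defer is not a routine computation but the core of the proof.

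One further point: even granting the identification, your plan would at best yield membership in $\{0,1,2,3,4\}$, since the unit-circle condition on the roots of a monic self-reciprocal quadratic $t^2+(p-2)t+1$ is $|p-2|\leq 2$. Excluding $p=0$ requires the additional finite-order argument the paper runs through Corollary \ref{cor:proposition about order of matrix holds for Gamma twisted fractionally Calabi-Yau}; your proposal does not address this boundary case at all.
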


By solving the system of linear Diophantine equations given by parts (a) and (b) of Theorem \ref{thm:2-rep-fin semiregular} and using the existing literature on graph theory, we can give the following classification result. 

\begin{thm}[Proposition \ref{prop: classification in the regular case} and Proposition \ref{prop: classification in the edge-transitive case}]\label{thm:classification results}
Assume that $\charac(\K)=0$. Let $\Lambda=\K S_{(r,s)}/\I$ be a $2$-representation-finite $(r,s)$-star algebra. 
\begin{enumerate}
    \item[(i)] If $B_{\Lambda}$ is regular, then the underlying graph $\graph{B_{\Lambda}}$ is one of the following:
\begin{enumerate}[(a)]
    \item the bipartite complement of the path graph $P_2$;  
    \item the cycle graph $C_8$; 
    \item the Heawood graph (G-5 in the Encyclopedia of graphs \cite{EoG}) or its bipartite complement.
\end{enumerate}    
    \item[(ii)] If $B_{\Lambda}$ is edge-transitive, then the underlying graph $\graph{B_{\Lambda}}$ is one of graphs appearing in (a)---(c) or one of the following:
\begin{enumerate}
    \item[(d)] the graph G-9P730 in the Encyclopedia of Graphs or its bipartite complement;
    \item[(e)] the graph G-9P731 in the Encyclopedia of Graphs or its bipartite complement.
\end{enumerate}
\end{enumerate}
\end{thm}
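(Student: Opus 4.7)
The plan is to leverage Theorem~\ref{thm:2-rep-fin semiregular} to reduce the classification to a small Diophantine problem, and then to match the resulting parameter lists against catalogued bipartite graphs.

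First I would verify that in both cases of the theorem the graph $\graph{B_{\Lambda}}$ is semi-regular, so that Theorem~\ref{thm:2-rep-fin semiregular} applies. For part (i) a regular bipartite graph is tautologically semi-regular with $\Sigma_1=\Sigma_2=:\Sigma$, and the handshake equality $r\Sigma_1=s\Sigma_2$ from (a) then forces $r=s$. For part (ii) I would invoke the standard fact that any connected edge-transitive bipartite graph without isolated vertices is biregular; since the bipartition of a connected bipartite graph is canonical, every automorphism either preserves or swaps the two parts, and a short case analysis on an automorphism mapping one edge to another shows that $x$-vertices (respectively $y$-vertices) incident to some edge all have the same degree.

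The next step is to solve the system
\[
r\Sigma_1 = s\Sigma_2, \qquad \Sigma_1\Sigma_2+r+s-r\Sigma_1 \in \{1,2,3,4\}
\]
in positive integers with the natural bounds $\Sigma_1\le s$ and $\Sigma_2\le r$. In the regular case this reduces to $\Sigma^2-r\Sigma+2r\in\{1,2,3,4\}$, a quadratic in $\Sigma$ whose positive integer solutions can be listed directly. In the general biregular case I would substitute $\Sigma_2=r\Sigma_1/s$ into (b), rewrite the constraint as $\Sigma_1(r-\Sigma_2)\ge r+s-4$, and use the remaining bounds to enumerate the finite list of admissible quadruples $(r,s,\Sigma_1,\Sigma_2)$.

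Finally, for each admissible quadruple I would identify all regular (respectively edge-transitive) bipartite graphs realising these parameters, noting that taking bipartite complements preserves both regularity and edge-transitivity and must therefore be incorporated into the final list. For the smallest parameters this can be carried out by direct hand-inspection, while the remaining cases are matched against entries of the Encyclopedia of Graphs \cite{EoG}; the assumption $\charac(\K)=0$ enters only through the earlier results underpinning Theorem~\ref{thm:2-rep-fin semiregular}. The main obstacle is this last matching step for part~(ii): a single biregular bidegree can in principle support several non-isomorphic edge-transitive bipartite graphs, and separating them ultimately appeals to the existing classification of small edge-transitive bipartite graphs rather than to any structural argument internal to the paper.
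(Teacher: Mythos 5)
Your reduction to the Diophantine system via Theorem \ref{thm:2-rep-fin semiregular}, and your observation that regularity or edge-transitivity forces semi-regularity, both match the paper's strategy. But there is a genuine gap at the final step: the conditions $r\Sigma_1=s\Sigma_2$ and $\Sigma_1\Sigma_2+r+s-r\Sigma_1\in\{1,2,3,4\}$ are only \emph{necessary}, and many graphs realising admissible quadruples do \emph{not} come from $2$-representation-finite algebras. Your plan ends with ``identify all regular (respectively edge-transitive) bipartite graphs realising these parameters'' and match them against the catalogue, which would output a strictly larger list than the theorem asserts --- for instance the M\"obius--Kantor graph realises the solution $(8,3,8,3)$, the graph $I_1$ of Koledin--Stani\'c realises $(6,3,6,3)$, and several Encyclopedia entries (G-9P77R, G-9P6Z6, G-9P7N0, \dots) realise the $p=1$ and $p=2$ solutions; none of these appear in the conclusion. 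To prove the theorem one must \emph{exclude} all such candidates, and this is where the bulk of the paper's Appendix B lives: it uses the fact that $p$ determines the order of $-\Phi_{\Gamma_\Lambda}$ (Corollary \ref{cor:all the possible cases}) to discard graphs whose Coxeter matrix has infinite order, the vanishing criteria $\Ext^1_{\Lambda}(D\Lambda,\Lambda)=0$ and $\Ext^1_{\Lambda}(\tau_2 D\Lambda,\Lambda)=0$ (e.g.\ via the ``two vertices with the same neighbours'' obstruction from \cite{ST24}), and QPA computations of whether the $3$-preprojective algebra is selfinjective. Your proposal contains no mechanism for any of these eliminations.

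A secondary omission: in the regular case the paper does not enumerate graphs parameter-by-parameter from scratch, but first invokes Corollary \ref{cor:the graph B_Lambda is reflexive and almost always Salem} to conclude that $\graph{B_{\Lambda}}$ is reflexive, and then uses the Koledin--Stani\'c classification of regular bipartite \emph{reflexive} graphs to cut the candidate pool down to a short explicit list before applying the representation-theoretic tests. Without the reflexivity input, or some substitute for the exclusion arguments above, the argument as you have sketched it does not close.
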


We refer the interested reader to (\ref{eq:the A3 example})--(\ref{eq:the bipartite complement of the graph G-9P731 example}) for a detailed description of these $2$-representation-finite algebras. We note that in all examples we know of a quadratic monomial algebra $\Lambda$ being $2$-representation-finite, the graph $\graph{B_{\Lambda}}$ is both edge-transitive and semi-regular. We thus pose the following question.

\begin{question}\label{question:are they all semi-regular edge-transitive}
    Does there exist a $2$-representation-finite quadratic monomial algebra $\Lambda=\K S_{(r,s)}/\I$ such that $\graph{B_{\Lambda}}$ is not semi-regular or edge-transitive? 
\end{question}

If the answer to Question \ref{question:are they all semi-regular edge-transitive} is negative, then Theorem \ref{thm:classification results} provides a classification of all $2$-representation-finite quadratic monomial algebras when $\charac(\K)=0$.
    
\subsubsection*{Applications} Theorem \ref{thm:2-rep-fin semiregular} provides an unexpected connection between representation theory, graph theory and linear Diophantine equations. Before we mention one application of this connection, let us recall that a graph whose second largest eigenvalue is less than or equal to $2$ is called a \emph{reflexive} graph \cite{KS14}; if furthermore the graph is bipartite and the largest eigenvalue is strictly greater than $2$ then the graph is called a \emph{Salem} graph \cite{McKS05}. Salem graphs can be thought of as a generalization of the Dynkin and extended Dynkin diagrams.  
\begin{thm}[Corollary \ref{cor:the graph B_Lambda is reflexive and almost always Salem}]\label{thm:2-rep-fin gives Salem graph}
    Assume that $\charac(\K)=0$. Let $\Lambda=\K S_{(r,s)}/\I$ be a $2$-representation-finite $(r,s)$-star algebra. Assume that $\Lambda$ is semi-regular. Then $\graph{B_{\Lambda}}$ is a reflexive graph. Moreover, $\graph{B_{\Lambda}}$ is a Salem graph if and only if $(r,s)\neq (1,1)$ and $(r,s)\neq (4,4)$.
\end{thm}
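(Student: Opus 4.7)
The approach is to translate the Salem and reflexive conditions into spectral statements about the biadjacency matrix $M$ of $\graph{B_{\Lambda}}$ and then combine these with the Diophantine constraints from Theorem \ref{thm:2-rep-fin semiregular}.

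First I would set up the spectral framework. Since $\graph{B_{\Lambda}}$ is bipartite with parts of sizes $r$ and $s$, its adjacency matrix has block form $\begin{pmatrix} 0 & M \\ M^{T} & 0 \end{pmatrix}$, so its nonzero eigenvalues are precisely the numbers $\pm\sqrt{\mu}$ for $\mu$ a nonzero eigenvalue of $MM^{T}$. The semi-regular hypothesis gives $M\mathbf{1}_{s}=\Sigma_{1}\mathbf{1}_{r}$ and $M^{T}\mathbf{1}_{r}=\Sigma_{2}\mathbf{1}_{s}$, whence $MM^{T}\mathbf{1}_{r}=\Sigma_{1}\Sigma_{2}\mathbf{1}_{r}$; by Perron--Frobenius applied to the (connected) graph $\graph{B_{\Lambda}}$ this is the largest eigenvalue of $MM^{T}$. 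Hence the largest adjacency eigenvalue of $\graph{B_{\Lambda}}$ equals $\sqrt{\Sigma_{1}\Sigma_{2}}$ and the second-largest equals $\sqrt{\mu_{2}(MM^{T})}$.

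For the Salem characterisation I would argue that Salem is equivalent to $\Sigma_{1}\Sigma_{2}\geq 5$. Rewriting Theorem \ref{thm:2-rep-fin semiregular}(b) as $\Sigma_{1}\Sigma_{2}=r\Sigma_{1}-r-s+k$ with $k\in\{1,2,3,4\}$, substituting $r\Sigma_{1}=s\Sigma_{2}$ from (a), and using the trivial bounds $\Sigma_{1}\leq s$ and $\Sigma_{2}\leq r$, the residual system $\Sigma_{1}\Sigma_{2}\leq 4$ reduces to a finite enumeration. Combined with the connectedness of $\graph{B_{\Lambda}}$, this leaves exactly the quadruples $(\Sigma_{1},\Sigma_{2},r,s)=(1,1,1,1)$ and $(2,2,4,4)$, corresponding respectively to the single edge (with $\lambda_{1}=1$) and to $C_{8}$ (with $\lambda_{1}=2$), matching the excluded pairs $(r,s)\in\{(1,1),(4,4)\}$.

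The main obstacle is the reflexivity claim $\mu_{2}(MM^{T})\leq 4$. My plan would be to exploit the $2$-representation-finiteness of $\Lambda$ via its Cartan matrix $C$, whose block structure has $M$ as an off-diagonal block: $2$-representation-finiteness forces $\tau_{2}$-periodicity on the $2$-cluster tilting module and hence all eigenvalues of the Coxeter transformation $\Phi=-(C^{T})^{-1}C$ to be roots of unity, and this spectral constraint should translate into a bound on the singular values of $M$, yielding $\mu_{2}(MM^{T})\leq 4$. A backup strategy, should this algebraic route prove intractable, is to invoke the classification of connected bipartite graphs whose second eigenvalue exceeds $2$ (cf.~\cite{McKS05}) and check case-by-case that none of them satisfies Theorem \ref{thm:2-rep-fin semiregular}(a,b). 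I expect the main difficulty to lie precisely in the passage between the algebraic Coxeter spectrum and the graph-theoretic adjacency spectrum, especially in handling semi-regular cases not already covered by the regular/edge-transitive classification in Theorem \ref{thm:classification results}.
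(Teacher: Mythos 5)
Your overall architecture (largest eigenvalue equals $\sqrt{\Sigma_1\Sigma_2}$, Salem $\iff \Sigma_1\Sigma_2>4$, reflexivity from the spectrum of a Coxeter matrix) matches the paper's, but two steps as written do not go through. First, the Salem enumeration is wrong: the system $r\Sigma_1=s\Sigma_2$, $\Sigma_1\Sigma_2+r+s-r\Sigma_1\in\{1,2,3,4\}$ together with $\Sigma_1\Sigma_2\le 4$ and connectedness does \emph{not} reduce to $(1,1,1,1)$ and $(2,2,4,4)$. The quadruple $(\Sigma_1,\Sigma_2,r,s)=(2,2,m,m)$ satisfies both Diophantine constraints for \emph{every} $m$ (it gives $p=4$), and the corresponding connected semi-regular graph is the cycle $C_{2m}$; connectedness does nothing to force $m=4$. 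Smaller spurious solutions such as $(\Sigma_1,\Sigma_2,r,s)=(1,2,2,1)$ also pass your test. The paper closes these gaps with representation-theoretic input you omit: Proposition \ref{prop:star algebras which are 2-hereditary}(b) forces either $(r,s)=(1,1)$ or $r,s\ge 4$ with $2\le\Sigma_1\le s-2$, $2\le\Sigma_2\le r-2$, so $\Sigma_1\Sigma_2\le 4$ means $\Sigma_1=\Sigma_2=2$; and then the regular classification (Proposition \ref{prop: classification in the regular case}, itself resting on the finite order of the Coxeter matrix of $\Gamma_\Lambda$ and computer checks) is needed to exclude $C_{2m}$ for $m\neq 4$. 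Pure arithmetic plus connectedness cannot do this.

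Second, the reflexivity claim --- the ``main obstacle'' by your own account --- is not proved but only planned: you assert that the roots-of-unity condition on the Coxeter spectrum ``should translate'' into $\mu_2(MM^T)\le 4$, and offer a case-check of \cite{McKS05} as a fallback, without executing either. This translation is precisely the technical core of the paper's argument: it passes through the one-point extension $\Gamma_\Lambda=\K B_\Lambda[M]$, the trivial-extension equivalence $T(\Lambda)\simeq T(\Gamma_\Lambda)$ (so that $\Gamma_\Lambda$ is twisted fractionally Calabi--Yau, Lemma \ref{lem:twisted fractionally Calabi-Yau of trivial extension}), Proposition \ref{prop:fractionally Calabi--Yau implies eigenvalues of Coxeter matrix are in unit circle}, and finally Proposition \ref{prop:if Gamma is twisted fractionally Calabi-Yau then graph is reflexive} relating the Coxeter matrix of $\Gamma_\Lambda$ to the adjacency matrix of $\graph{B_\Lambda}$. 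Note also that working with the Cartan matrix of $\Lambda$ itself, as you propose, is not obviously equivalent: the paper deliberately replaces $\Lambda$ by the derived-equivalent $\Gamma_\Lambda$ because it is there that the off-diagonal block is literally the biadjacency matrix $R$. Until this step is carried out, the reflexivity statement (and hence the ``only if'' half of the Salem criterion, which presupposes it) remains unproved.
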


Most of our work is done in a more general setting where $S_{(r,s)}$ is replaced by an $n$-levelled quiver and $2$-representation-finiteness is replaced by being twisted fractionally Calabi--Yau; the results in this general setting may be of independent interest.

\subsubsection*{Outline of the paper} In Section \ref{Section:Background} we recall the necessary background for this article. In Section \ref{Section:n-levelled algebras} we study $n$-levelled algebras from the point of view of higher homological algebra. In Section \ref{Section:One-point extensions of bipartite hereditary algebras} we study one-point extensions of bipartite hereditary algebras and explain how they are related to star quivers. In Section \ref{Section:2-representation-finite quadratic monomial algebras} we focus on $2$-representation-finite quadratic monomial algebras and prove the main results of this article. In Appendix \ref{Section:Classification in regular and edge-transitive case} we investigate which solutions of the system of linear Diophantine equations found in Appendix \ref{Section:Solutions to the system of linear Diophantine equations} give rise to $2$-representation-finite algebras in the regular and edge-transitive case.  

\section*{Acknowledgements}
The authors would like to thank Darius Dramburg, Martin Herschend and \O yvind Solberg for useful comments and discussions.

The first named author is grateful for support from the  Danish National Research Foundation (DNRF156) National Research Foundation (DNRF156), and the Norwegian Research Council via the project "Higher homological algebra and tilting theory" (301046).

The second named author is grateful to have been supported by Norwegian Research Council project 301375, ``Applications of reduction techniques and computations in representation theory''.

The third named author gratefully acknowledge the support and resources provided by the Centre for Advanced Studies at the Norwegian Academy of Science for their kind hospitality during the preparation of part of this paper. 

\section{Background}
\label{Section:Background}
\subsection{Conventions and notation}
For a positive integer $d\geq 1$, we denote by $I_d$ or simply $I$ the $d\times d$ identity matrix and by $0_d$ or simply $0$ the $d\times d$ zero matrix. We write the composition of morphisms 
\[
X \xrightarrow{f} Y \xrightarrow{g} Z
\]
as $g \circ f$. 

We fix an algebraically closed field $\K$. Throughout this paper, by an algebra we mean a finite-dimensional associative $\K$-algebra and by a $\Lambda$-module we mean a finitely generated right $\Lambda$-module. All quivers in this article are connected unless stated otherwise. For standard results on representation theory of algebras and quivers we refer the reader to \cite{ARS97,ASS06}.

For a finite-dimensional algebra $\Lambda$, we let $\modu \Lambda$ be the category of finite-dimensional $\Lambda$-modules. Similarly, $\gr \Lambda$ denotes the category of finite-dimensional graded $\Lambda$-modules with morphisms consisting of homogeneous homomorphisms of degree zero. Given a graded $\Lambda$-module $M= \oplus_{i\in\ZZ}M_i$, we define the \emph{$j$-th graded shift} of $M$ to be the graded $\Lambda$-module $M\langle j \rangle$ with $M\langle j \rangle_i = M_{i-j}$. We denote by $D$ the duality $D(-) = \Hom_{\K}(-,\K)$ between $\modu \Lambda$ and $\modu \Lambda^{\text{op}}$. For a $\Lambda$-module $M$ we denote by $\pdim(M)$ respectively $\idim(M)$ the projective, respectively injective, dimension of $M$. 

Let $Q$ be a quiver. We write the composition of arrows
\[
\begin{tikzcd}
i \xrightarrow{\alpha}j \xrightarrow{\beta} k
\end{tikzcd}
\]
in $Q$ as $\alpha\beta$. The underlying graph of the quiver $Q$ will be denoted by $\graph{Q}$. 

Given a bound quiver algebra $\Lambda=\K Q/\I$ and $M\in\modu\Lambda$, the \emph{dimension vector of $M$} is defined to be the vector $\dimv(M)$ given by
\[
\dimv(M) = (\dim_{\K}(Me_1),\dots,\dim_{\K}(Me_n))^T, 
\]
where $e_1,\dots,e_n$ are the idempotents of $\Lambda$ corresponding to the vertices of $Q$. We denote the \emph{Loewy length} of $M$ by $\ell\ell_{\Lambda}(M)$ or simply $\ell\ell(M)$ when the algebra $\Lambda$ is clear from context. We use the convention that the \emph{Cartan matrix} $C_{\Lambda}$ of $\Lambda$ is given by the matrix consisting of the dimension vectors of the indecomposable projective $\Lambda$-modules arranged in order from top to bottom as row vectors. 
Given this, the \emph{Coxeter matrix} of $\Lambda$ is given by $-C_{\Lambda}^{T} C_{\Lambda}^{-1}$. The \emph{Coxeter polynomial} $\CP_{\Lambda}(t)$ of $\Lambda$ is the characteristic polynomial of the Coxeter matrix of $\Lambda$, that is
\[
\CP_{\Lambda}(t) = \det(tI-(-C_{\Lambda}^TC_{\Lambda}^{-1})).
\]
Note that the Coxeter polynomial of an algebra $\Lambda$ is a monic integer polynomial that is self-reciprocal, i.e.\ its coefficients form a palindromic sequence \cite{Lukas}, or, equivalently, it is a polynomial $\CP_{\Lambda}(t)$ of degree $n$ satisfying $$\CP_{\Lambda}(t) = t^n \CP_{\Lambda}(t^{-1}).$$ 

\subsection{Spectral graph theory}

By a \emph{graph} we mean a simple undirected graph $G = (V, E)$ where $V$ is the set of vertices and $E$ is the set of edges. All graphs in this article are \emph{finite}, that is $V$ and $E$ are finite sets. To any graph $G$ one can associate its \emph{adjacency matrix} $A(G)$ by choosing some ordering on the vertices $V$ and setting $(A(G))_{i,j} = 1$ when there is an edge between $i$ and $j$ and letting $(A(G))_{i,j} = 0$ otherwise. Clearly, the adjacency matrix of a graph is symmetric, and since it is also real, its spectrum is real as well. Hence, its eigenvalues are ordered, say as $\lambda_{n} \leq \lambda_{n-1} \leq \ldots \leq \lambda_1$.

Recall that a graph $G$ is called \emph{bipartite} if there is a partition $V = X \cup Y$ such that every edge has one endpoint in $X$ and the other in $Y$. We call such a partition a \emph{coloring} of $G$. It is easy to see that a graph $G$ is bipartite if and only if the adjacency matrix of $G$ can be given in the form
\[
A(G) = 
\begin{bmatrix} 
0_r & R \\
R^T & 0_s
\end{bmatrix}
\]
with $R$ some $r\times s$-matrix with zeroes and ones. The spectrum of a bipartite graph is symmetric about the origin, and this property in fact characterizes the class of bipartite graphs. 

The \emph{degree} of a vertex of a graph is the number of edges incident with it. 
A graph $G$ is \emph{regular} if every vertex has the same degree. 
A bipartite graph $G = (X \cup Y, E)$ is \emph{semi-regular} if every vertex in $X$ has degree $m \in \mathbb{N}$ and every vertex in $Y$ has degree $n \in \mathbb{N}$, and in this case we say that $G$ has \emph{bidegree} $(m,n)$. Note that in this case we have $m\abs{X}=n\abs{Y}$ by counting the number of edges in the graph.

The graphs whose largest eigenvalue is less than or equal to $2$ are called \emph{Smith} or \emph{cyclotomic} graphs and have been classified: they coincide with the class of Dynkin and extended Dynkin diagrams \cite{SMI}. The case where the bound is sharp corresponds to the Dynkin case, whereas equality corresponds to the extended Dynkin case. All but one of the classes of Smith graphs are bipartite. The graphs whose second largest eigenvalue is less than or equal to $2$ form another class of much studied graphs called \emph{reflexive} graphs \cite{KS14}. Furthermore, bipartite reflexive graphs whose largest eigenvalue is strictly greater than $2$ are called \emph{Salem} graphs \cite{McKS05}. These notions have connections to hyperbolic geometry \cite{Discrete-hyperbolic-geometry}. 

Given a bipartite graph $G = (X \cup Y, E)$ one can consider \emph{its bipartite complement}, namely the graph obtained by taking the same vertex set as $G$ but having edge set consisting of every edge going between $X$ and $Y$ that is \emph{not} in $E$. For an example, see figure \ref{fig:bipartite complement}.

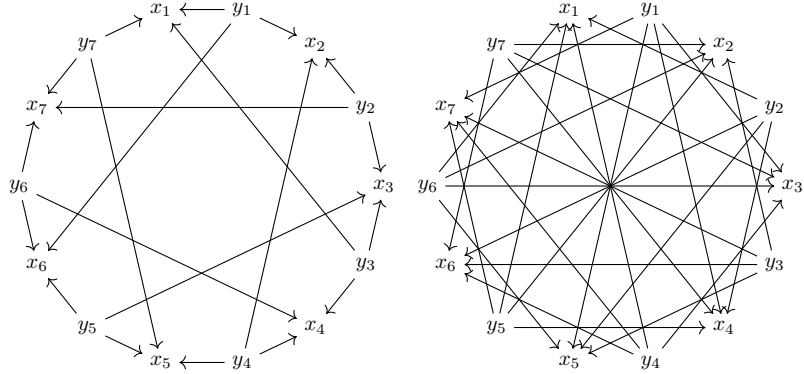
\begin{figure}
    \centering
    \begin{tikzpicture}[scale=0.8, transform shape, baseline={(current bounding box.center)}]

\def\n{14}

\def\radius{3}

    \node (x3) at ({360/\n * (1 - 1)}:\radius) {$x_3$};
    \node (y2) at ({360/\n * (2 - 1)}:\radius) {$y_2$};
    \node (x2) at ({360/\n * (3 - 1)}:\radius) {$x_2$};
    \node (y1) at ({360/\n * (4 - 1)}:\radius) {$y_1$};
    \node (x1) at ({360/\n * (5 - 1)}:\radius) {$x_1$};
    \node (y7) at ({360/\n * (6 - 1)}:\radius) {$y_7$};
    \node (x7) at ({360/\n * (7 - 1)}:\radius) {$x_7$};
    \node (y6) at ({360/\n * (8 - 1)}:\radius) {$y_6$};
    \node (x6) at ({360/\n * (9 - 1)}:\radius) {$x_6$};
    \node (y5) at ({360/\n * (10 - 1)}:\radius) {$y_5$};
    \node (x5) at ({360/\n * (11 - 1)}:\radius) {$x_5$};
    \node (y4) at ({360/\n * (12 - 1)}:\radius) {$y_4$};
    \node (x4) at ({360/\n * (13 - 1)}:\radius) {$x_4$};
    \node (y3) at ({360/\n * (14 - 1)}:\radius) {$y_3$};

    \draw[->] (y1) to (x1);
    \draw[->] (y1) to (x2);
    \draw[->] (y1) to (x6);
    \draw[->] (y2) to (x2);
    \draw[->] (y2) to (x3);
    \draw[->] (y2) to (x7);
    \draw[->] (y3) to (x3);
    \draw[->] (y3) to (x4);
    \draw[->] (y3) to (x1);
    \draw[->] (y4) to (x4);
    \draw[->] (y4) to (x5);
    \draw[->] (y4) to (x2);
    \draw[->] (y5) to (x5);
    \draw[->] (y5) to (x6);
    \draw[->] (y5) to (x3);
    \draw[->] (y6) to (x6);
    \draw[->] (y6) to (x7);
    \draw[->] (y6) to (x4);
    \draw[->] (y7) to (x1);
    \draw[->] (y7) to (x7);
    \draw[->] (y7) to (x5);
\end{tikzpicture}
\begin{tikzpicture}[scale=0.8, transform shape, baseline={(current bounding box.center)}]

\def\n{14}

\def\radius{3}

    \node (x3) at ({360/\n * (1 - 1)}:\radius) {$x_3$};
    \node (y2) at ({360/\n * (2 - 1)}:\radius) {$y_2$};
    \node (x2) at ({360/\n * (3 - 1)}:\radius) {$x_2$};
    \node (y1) at ({360/\n * (4 - 1)}:\radius) {$y_1$};
    \node (x1) at ({360/\n * (5 - 1)}:\radius) {$x_1$};
    \node (y7) at ({360/\n * (6 - 1)}:\radius) {$y_7$};
    \node (x7) at ({360/\n * (7 - 1)}:\radius) {$x_7$};
    \node (y6) at ({360/\n * (8 - 1)}:\radius) {$y_6$};
    \node (x6) at ({360/\n * (9 - 1)}:\radius) {$x_6$};
    \node (y5) at ({360/\n * (10 - 1)}:\radius) {$y_5$};
    \node (x5) at ({360/\n * (11 - 1)}:\radius) {$x_5$};
    \node (y4) at ({360/\n * (12 - 1)}:\radius) {$y_4$};
    \node (x4) at ({360/\n * (13 - 1)}:\radius) {$x_4$};
    \node (y3) at ({360/\n * (14 - 1)}:\radius) {$y_3$};

    \draw[->] (y1) to (x3);
    \draw[->] (y1) to (x4);
    \draw[->] (y1) to (x5);
    \draw[->] (y1) to (x7);
    \draw[->] (y2) to (x1);
    \draw[->] (y2) to (x4);
    \draw[->] (y2) to (x5);
    \draw[->] (y2) to (x6);
    \draw[->] (y3) to (x2);
    \draw[->] (y3) to (x5);
    \draw[->] (y3) to (x6);
    \draw[->] (y3) to (x7);
    \draw[->] (y4) to (x1);
    \draw[->] (y4) to (x3);
    \draw[->] (y4) to (x6);
    \draw[->] (y4) to (x7);
    \draw[->] (y5) to (x1);
    \draw[->] (y5) to (x2);
    \draw[->] (y5) to (x4);
    \draw[->] (y5) to (x7);
    \draw[->] (y6) to (x1);
    \draw[->] (y6) to (x2);
    \draw[->] (y6) to (x3);
    \draw[->] (y6) to (x5);
    \draw[->] (y7) to (x2);
    \draw[->] (y7) to (x3);
    \draw[->] (y7) to (x4);
    \draw[->] (y7) to (x6);
\end{tikzpicture}
\caption{On the left, a bipartite graph known as the \emph{Heawood graph}. On the right, its bipartite complement.}
    \label{fig:bipartite complement}
\end{figure}

A \emph{loopless multigraph} $G$ is a graph with possibly multiple parallel edges, but with no edges starting and ending in the same vertex. Equivalently, it is a graph $G = (V,E)$ with a weight function $w \colon E \rightarrow \mathbb{Z}_{>0}$. The degree of a vertex $v$ is still defined to be the number of edges adjacent to $v$ (or, equivalently, the sum $\sum w(e)$ where $e$ ranges among all edges adjacent to $v$).
Consequently, it makes sense to speak of bipartite loopless multigraphs as well as of regular and semi-regular versions of those. Note however that there is no natural definition of a bipartite complement for loopless multigraphs. 
The adjacency matrix $A(G)$ of such a graph $G$ has $(A(G))_{i,j}$ equal to the number of edges between vertices $i$ and $j$. 
Of course, this is a symmetric integer matrix, one can consider its spectrum, its Perron-Frobenius eigenvector, and so on as usual.
Note also that the facts listed above for the spectrum of a bipartite graph also hold in this generality. 

\subsection{Bi-eigenvectors and bi-eigenvalues}

Let $G=(X\cup Y, E)$ be a semi-regular graph of bidegree $(m,n)$. Assume that $\abs{X}=r$ and $\abs{Y}=s$. Let $A=A(G)=\begin{bsmallmatrix}
    0 & R \\ R^T & 0
\end{bsmallmatrix}$ be the adjacency matrix of $A$. Then we have
\[
\begin{bmatrix}
    0 & R \\ R^T & 0
\end{bmatrix} \begin{bmatrix}
    1 \\ \vdots \\ 1 
\end{bmatrix}_{r+s} = \begin{bmatrix}
    m \\ \vdots \\ m \\ n \\ \vdots \\ n
\end{bmatrix} = \begin{bmatrix}
    m \begin{bmatrix}
        1 \\ \vdots \\ 1 
    \end{bmatrix}_r \\ n \;\begin{bmatrix}
        1 \\ \vdots \\ 1 
    \end{bmatrix}_s
\end{bmatrix}.
\]
We want to study this eigenvector-like behaviour of the all-ones vector with respect to the adjacency matrix of a semi-regular graph. Motivated by this, we introduce the following definition.

\begin{defin}\label{def:bieigenvector}
Let 
\[
A=
\begin{bmatrix}
0_r & R \\
R^T & 0_s
\end{bmatrix}
\]
be a square $\CC$-matrix, let $d_r$ be an $r\times 1$ $\CC$-vector, let $d_s$ be an $s\times 1$ $\CC$-vector and let $\Sigma_1,\Sigma_2\in\CC$.
We say that $d=\begin{bsmallmatrix} d_r \\ d_s \end{bsmallmatrix}$ is a \emph{bi-eigenvector of $A$ with bi-eigenvalue $(\Sigma_1,\Sigma_2)$} if $d$ is nonzero and both $Rd_s=\Sigma_1 d_r$ and $R^T d_r=\Sigma_2 d_s$ hold, or, equivalently, if it is nonzero and
\[
\begin{bmatrix}
0_r & R \\
R^T & 0_s
\end{bmatrix} 
\begin{bmatrix}
d_r \\
d_s
\end{bmatrix}
= 
\begin{bmatrix}
\Sigma_1 d_r \\
\Sigma_2 d_s
\end{bmatrix}.
\]
\end{defin}

\begin{example}\label{ex:eigenvector is bi-eigenvector}
Let 
\[
A=
\begin{bmatrix}
0_r & R \\
R^T & 0_s
\end{bmatrix}.
\]
Then if $v=(v_1,\dots,v_{r+s})^T$ is an eigenvector of $A$ with eigenvalue $\lambda$, we have that $((v_1,\dots,v_r)^T,(v_{r+1},\dots,v_{r+s})^T)$ is a bi-eigenvector of $A$ with bi-eigenvalue $(\lambda,\lambda)$.
\end{example}

\begin{lem}\label{lem:basic properties of bi-eigenvectors}
Let
\[
A=
\begin{bmatrix}
0_r & R \\
R^T & 0_s
\end{bmatrix}
\]
be a square $\CC$-matrix. Let $(d_r,d_s)$ be a bi-eigenvector of $A$ with bi-eigenvalue $(\Sigma_1,\Sigma_2)$. Then the following hold.
\begin{enumerate}[(a)]
    \item We have $RR^T d_r=\Sigma_1\Sigma_2 d_r$. In particular, if $d_r\neq 0$, then $d_r$ is an eigenvector of $RR^T$ with eigenvalue $\Sigma_1\Sigma_2$.
    \item We have $R^TR d_s=\Sigma_1\Sigma_2 d_s$. In particular, if $d_s\neq 0$, then $d_s$ is an eigenvector of $R^TR$ with eigenvalue $\Sigma_1\Sigma_2$.
    \item We have $\Sigma_1 \lvert d_r\rvert^2 = \Sigma_2 \lvert d_s \rvert^2$.
\end{enumerate}
\end{lem}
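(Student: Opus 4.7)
Parts (a) and (b) follow immediately by composing the two defining identities $Rd_s=\Sigma_1 d_r$ and $R^Td_r=\Sigma_2 d_s$. For (a), I would apply $R$ to the second identity:
\[
RR^T d_r = R(\Sigma_2 d_s) = \Sigma_2 (R d_s) = \Sigma_2\Sigma_1 d_r = \Sigma_1\Sigma_2 d_r.
\]
Part (b) is completely symmetric, obtained by applying $R^T$ to the first identity. The ``in particular'' clauses are then just the usual convention that an eigenvector is by definition nonzero, so they come for free once the corresponding component $d_r$ or $d_s$ is assumed nonzero.

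For (c) the strategy is to evaluate the scalar $d_r^T R d_s$ in two different ways using the two defining identities. Substituting $Rd_s=\Sigma_1 d_r$ on the right gives
\[
d_r^T R d_s = d_r^T(\Sigma_1 d_r) = \Sigma_1 (d_r^T d_r) = \Sigma_1 \lvert d_r \rvert^2,
\]
while transposing to the left and substituting $R^Td_r=\Sigma_2 d_s$ gives
\[
d_r^T R d_s = (R^T d_r)^T d_s = (\Sigma_2 d_s)^T d_s = \Sigma_2 (d_s^T d_s) = \Sigma_2 \lvert d_s \rvert^2.
\]
Equating the two expressions yields the identity in (c).

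There is essentially no obstacle: the whole lemma is a routine linear-algebraic consequence of the two identities packaged in the definition of a bi-eigenvector, and no property of the block decomposition beyond those two identities is used. The only minor point worth being careful about is that in (c) the notation $\lvert d_r\rvert^2$ must be read as the bilinear product $d_r^T d_r$ rather than the Hermitian product $d_r^\ast d_r$, so that no complex conjugate is introduced when $R^T$ is moved across the pairing; with this convention the argument goes through verbatim for arbitrary complex $R$, $d_r$, $d_s$, $\Sigma_1$, $\Sigma_2$, and specialises to the genuine squared norm in the real case relevant for adjacency matrices of graphs.
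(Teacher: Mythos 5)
Your proof is correct and follows essentially the same route as the paper: (a) and (b) by composing the two defining identities, and (c) by evaluating $d_r^T R d_s$ in two ways, which is exactly the paper's chain of equalities. Your remark that $\lvert d_r\rvert^2$ must be read as the bilinear product $d_r^T d_r$ is consistent with the paper's own computation, which likewise writes $d_r^T(\Sigma_1 d_r)$ and $\Sigma_2 d_s^T d_s$ without conjugation.
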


\begin{proof}
Parts (a) and (b) follow immediately from $Rd_s=\Sigma_1 d_r$ and $R^T d_r=\Sigma_2 d_s$. For part (c) we have
\[
\Sigma_1 \lvert d_r \rvert^2 = d_r^T(\Sigma_1 d_r) = d_r^T (R d_s) = (R^T d_r)^T d_s = \Sigma_2 d_s^T d_s = \Sigma_2 \lvert d_s\rvert^2.  \qedhere
\] 
\end{proof}

We are especially interested in nonnegative integer bi-eigenvectors of adjacency matrices of bipartite loopless multigraphs.

\begin{lem}\label{lem:bi-eigenvectors and nonnegative integer matrices}
Let $G$ be a bipartite loopless multigraph with adjacency matrix
\[
A= A(G)=
\begin{bmatrix}
0_r & R \\
R^T & 0_s
\end{bmatrix},
\]
and assume that $G$ has at least one edge. Let $(d_r,d_s)$ be a bi-eigenvector of $A$ with nonnegative integer entries and with bi-eigenvalue $(\Sigma_1,\Sigma_2)$. Then the following hold.
\begin{enumerate}[(a)]
    \item $\Sigma_1,\Sigma_2\in\QQ$, $\Sigma_1\Sigma_2\in\ZZ$ and $\Sigma_1\Sigma_2>0$.
    \item $d_r\neq 0$ and $d_s\neq 0$. In particular, $d_r$ is an eigenvector of $RR^T$ with eigenvalue $\Sigma_1\Sigma_2$ and $d_s$ is an eigenvector of $R^TR$ with eigenvalue $\Sigma_1\Sigma_2$.
    \item $\Sigma_1\lvert d_r\rvert^2 = \Sigma_2 \lvert d_s \rvert^2$.
    \item $\begin{bmatrix}
    \sqrt{\Sigma_1} d_r \\ 
    \sqrt{\Sigma_2} d_s
    \end{bmatrix}$ is an eigenvector of $A$ with eigenvalue $\sqrt{\Sigma_1\Sigma_2}$.
    \item The largest eigenvalue of $A$ is $\sqrt{\Sigma_1\Sigma_2}$.
\end{enumerate}
\end{lem}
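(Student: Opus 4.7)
The plan is to handle the parts in the order (c), (b), (a), (d), (e). Part (c) is immediate from Lemma~\ref{lem:basic properties of bi-eigenvectors}(c). I would tackle (b) next, because the real square roots in (d) and (e) and the strict positivity clause in (a) all rest on both $d_r\neq 0$ and $d_s\neq 0$.

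For (b): since $d\neq 0$ has nonnegative integer entries, without loss of generality $d_r\neq 0$. Suppose for contradiction $d_s=0$. Then $Rd_s=0=\Sigma_1 d_r$ forces $\Sigma_1=0$, and $R^T d_r=\Sigma_2\cdot 0=0$, so $d_r\in\ker R^T$. Since $R\neq 0$ (as $G$ has at least one edge) and $R$, $d_r$ both have nonnegative entries, the equation $R^T d_r=0$ forces every positive entry of $d_r$ to sit in an index $i$ for which row $i$ of $R$ is identically zero; hence the support of $d_r$ lies entirely in the isolated $x$-vertices and $(d_r,d_s)$ records no information about any edge of $G$, yielding the desired contradiction. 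A symmetric argument handles the case $d_r=0$, $d_s\neq 0$.

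For (a): once (b) is established, for any index $i$ with $(d_r)_i\neq 0$ we have $\Sigma_1=(Rd_s)_i/(d_r)_i\in\mathbb{Q}_{\geq 0}$, and symmetrically $\Sigma_2\in\mathbb{Q}_{\geq 0}$. By Lemma~\ref{lem:basic properties of bi-eigenvectors}(a), $\Sigma_1\Sigma_2$ is a rational eigenvalue of the integer positive semidefinite matrix $RR^T$, hence a rational algebraic integer, hence in $\mathbb{Z}_{\geq 0}$. Strict positivity follows by noting that $\Sigma_1\Sigma_2=0$ would give $d_r\in\ker RR^T=\ker R^T$, putting us back in the degenerate configuration ruled out in (b).

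Parts (d) and (e) now fall out quickly. A direct computation gives
\[
A\,\bigl[\sqrt{\Sigma_1}\,d_r,\ \sqrt{\Sigma_2}\,d_s\bigr]^T=\bigl[\sqrt{\Sigma_2}\,\Sigma_1 d_r,\ \sqrt{\Sigma_1}\,\Sigma_2 d_s\bigr]^T=\sqrt{\Sigma_1\Sigma_2}\,\bigl[\sqrt{\Sigma_1}\,d_r,\ \sqrt{\Sigma_2}\,d_s\bigr]^T,
\]
where the real square roots are legal by (a); this is (d). For (e), this eigenvector has nonnegative real entries and positive eigenvalue $\sqrt{\Sigma_1\Sigma_2}$, so the Perron--Frobenius theorem applied to the nonnegative symmetric matrix $A$ (for which the spectral radius coincides with the largest eigenvalue) identifies it with $\lambda_1(A)$. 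The main obstacle I expect is the nonvanishing step in (b) (and the strict positivity half of (a) it feeds into), because it must squeeze a rigid pointwise conclusion out of the very weak hypothesis that $G$ merely has one edge.
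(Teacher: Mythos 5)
Your proof is correct and follows essentially the same route as the paper's: reduce to $RR^Td_r=\Sigma_1\Sigma_2\,d_r$ via Lemma \ref{lem:basic properties of bi-eigenvectors}, deduce that $\Sigma_1\Sigma_2$ is a positive integer (you make the rational-eigenvalue-of-an-integer-matrix step explicit, which the paper leaves implicit), verify (d) by the same direct computation, and invoke Perron--Frobenius for (e); the only structural difference is that you establish (b) before (a) by a support argument, whereas the paper proves (a) first and then gets (b) from $\Sigma_1,\Sigma_2\neq 0$. One shared soft spot: the ``desired contradiction'' at the end of your proof of (b) (like the paper's claim that $RR^T\neq 0$ already forces $\Sigma_1\Sigma_2>0$) is only a contradiction when $G$ has no isolated vertices --- otherwise the indicator vector of an isolated $x$-vertex together with $d_s=0$ is a legitimate nonnegative integer bi-eigenvector with $\Sigma_1=0$ --- but this hypothesis holds in every application of the lemma in the paper.
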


\begin{proof}
\begin{enumerate}[(a)]
    \item Since $R$, $d_r$ and $d_s$ have integer entries, it follows that $\Sigma_1,\Sigma_2\in\QQ$. Next, by Lemma \ref{lem:basic properties of bi-eigenvectors}(a) and (b) we have
    \[
    \text{$RR^T d_r = \Sigma_1\Sigma_2 d_r$ \;\;and\;\; $R^TR d_s = \Sigma_1\Sigma_2 d_s$.}
    \]
    Since the entries of $R$ are nonnegative integers, it follows that $RR^T$ and $R^TR$ both have nonnegative integer entries. Since $G$ has at least one edge, we also have that $RR^T\neq 0$ and $R^TR\neq 0$. Since both $d_r$ and $d_s$ have nonnegative integer entries, but by definition $d_r$ and $d_s$ cannot both be zero vectors, it follows that $\Sigma_1\Sigma_2$ is a positive integer.
    \item By definition we have $Rd_s=\Sigma_1 d_r$ and $R^T d_r = \Sigma_2 d_s$. Since by part (a) we have $\Sigma_1\neq 0$ and $\Sigma_2\neq 0$, and since by definition at least one of $d_r$ and $d_s$ is nonzero, it follows that the other one also is nonzero. The claim about eigenvectors and eigenvalues follows by Lemma \ref{lem:basic properties of bi-eigenvectors}(a) and (b).
    \item Follows by Lemma \ref{lem:basic properties of bi-eigenvectors}(c).
    \item By part (a) we have that $\begin{bmatrix}
    \sqrt{\Sigma_1} d_r \\ 
    \sqrt{\Sigma_2} d_s
    \end{bmatrix}$ is nonzero. Then we compute
    \[
    \begin{bmatrix}
    0_r & R \\
    R^T & 0_s
    \end{bmatrix}\begin{bmatrix}
    \sqrt{\Sigma_1} d_r \\ 
    \sqrt{\Sigma_2} d_s
    \end{bmatrix} =
    \begin{bmatrix}
    R\sqrt{\Sigma_2}d_s \\
    R^T\sqrt{\Sigma_1}d_r
    \end{bmatrix}=
    \begin{bmatrix}
    \Sigma_1\sqrt{\Sigma_2}d_r \\
    \Sigma_2\sqrt{\Sigma_1}d_s
    \end{bmatrix}=
    \sqrt{\Sigma_1\Sigma_2}
    \begin{bmatrix}
    \sqrt{\Sigma_1}d_r\\
    \sqrt{\Sigma_2}d_s
    \end{bmatrix}.
    \]
    \item By (d) we have that $\begin{bmatrix}
    \sqrt{\Sigma_1} d_r \\ 
    \sqrt{\Sigma_2} d_s
    \end{bmatrix}$ has $\sqrt{\Sigma_1\Sigma_2}$ as an eigenvalue. Since $\begin{bmatrix}
    \sqrt{\Sigma_1} d_r \\ 
    \sqrt{\Sigma_2} d_s
    \end{bmatrix}$ has nonnegative entries, it follows by the Perron--Frobenius theorem that $\sqrt{\Sigma_1\Sigma_2}$ is largest in absolute value among the eigenvalues of $A$. Since all eigenvalues of $A$ are real and $\sqrt{\Sigma_1\Sigma_2}$ is positive, the claim follows. \qedhere 
\end{enumerate}
\end{proof}

\subsection{Koszul algebras}\label{subsec:Koszul algebras}
Recall that a graded algebra $\Lambda = \bigoplus_{i \geq 0} \Lambda_i$ with semisimple degree $0$ part is a \emph{Koszul algebra} if $\Lambda_0$ satisfies 
$$\Ext^{i}_{\gr \Lambda}(\Lambda_0, \Lambda_0 \langle j \rangle) = 0$$ 
whenever $i \neq j$ \cite[Proposition 2.1.3]{Beilinson-Ginzburg-Soergel}. In that case, we call
\[
\Lambda^! = \bigoplus_{i \geq 0}\Ext_{\gr \Lambda}^{i}(\Lambda_0,\Lambda_0\langle i \rangle)
\]
the\emph{ Koszul dual of }$\Lambda$. If $\Lambda=\K Q/\I$ is a bound quiver algebra, then the quiver of $\Lambda^!$ is given by the opposite quiver of $Q$; see e.g. \cite[Definition 2.8.1 and Proposition 2.10.1]{Beilinson-Ginzburg-Soergel}.

\begin{example}\label{ex:quadratic relations Koszul} 
Let $\Lambda=\K Q/\I$ be a bound quiver algebra, where the ideal $\I$ is generated by quadratic relations and $\gldim\Lambda\leq 2$. 
Then $\Lambda$ is Koszul; see e.g.\ \cite[Proposition 2.19]{Martinez-Villa-survey}. 
\end{example}

\subsection{Higher-dimensional homological algebra}

We recall some notions and results in higher-dimensional homological algebra from \cite{ARtheoryrevisited, HerschendIyama10, HIO14}. Let $\Lambda$ be a finite-dimensional algebra. We denote by
\[
\tau_n\coloneqq \tau\Omega^{n-1}:\modu\Lambda \to \modu \Lambda \text{ and } \tau_n^{-}\coloneqq \tau^{-}\Omega^{-(n-1)}:\modu\Lambda \to \modu \Lambda
\]
the \emph{$n$-Auslander--Reiten translations}. 

\begin{defin}\label{def:n-cluster tilting module}
Let $M\in\modu\Lambda$ and $n\geq 1$ be an integer. We say that $M$ is an \emph{$n$-cluster tilting} module if
\begin{align*}
    \add(M) &= \{N\in\modu \Lambda \mid \Ext^{i}_{\Lambda}(M,N)=0 \text{ for all $0<i<n$} \} \\
    &= \{N\in\modu \Lambda \mid \Ext^{i}_{\Lambda}(N,M)=0 \text{ for all $0<i<n$} \}.
\end{align*}
\end{defin}

Note that if $M\in\modu\Lambda$ is an $n$-cluster tilting module and $X\in\add(M)$ is non-projective, then $\pdim(X)\geq n$; similarly if $X\in\add(M)$ is non-injective, then $\idim(X)\geq n$. Now assume that $\Lambda$ has finite global dimension. We denote by 
\[
\nu = \nu_{\Lambda}\coloneqq (D\Lambda)\overset{\mathbf{L}}{\otimes}_{\Lambda}- : \db(\modu\Lambda)\to \db(\modu\Lambda)
\]
the \emph{(derived) Nakayama functor} and by
\[
\nu^{-} = \nu^{-}_{\Lambda}\coloneqq \mathbf{R}\Hom_{\Lambda}(D\Lambda,-): \db(\modu\Lambda)\to \db(\modu\Lambda)
\]
its quasi-inverse; for more details see \cite{Hap88}. 
It is well-known that $\nu$ is a Serre functor of $\db(\modu \Lambda)$ whenever $\Lambda$ has finite global dimension. 
We denote $\nu_{n}=\nu\comp[-n]$. 
Then we have the formulas
\begin{equation}\label{eq:connect taun and nakayama}
\tau_n = H^{0}(\nu_n(-)):\modu\Lambda \to \modu\Lambda \text{ and } \tau_n^{-}=H^{0}(\nu_n^{-}(-)):\modu\Lambda \to \modu\Lambda,
\end{equation}
see \cite{HIO14}. We also use the following result from \cite{HIO14}.

\begin{prop}\cite[Proposition 2.3]{HIO14} \label{prop:nun remains in mod Lambda} Let $M\in\modu\Lambda$. 
\begin{enumerate}
    \item[(a)] If $\nu_n^{i}(M)\in\modu\Lambda$ for some $i>0$, then $\nu_n^{j}(M)\in\modu\Lambda$ for all $0\leq j\leq i$.
    \item[(b)] If $\nu_n^{-i}(M)\in\modu\Lambda$ for some $i>0$, then $\nu_n^{-j}(M)\in\modu\Lambda$ for all $0\leq j\leq i$.
\end{enumerate}
\end{prop}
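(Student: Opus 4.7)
The plan is to prove part (a) by induction on $i$, deducing part (b) by the dual argument (equivalently by applying (a) to $\Lambda^{\op}$, under which $\nu_n$ and $\nu_n^{-}$ are exchanged). The base case $i=1$ is vacuous. For the inductive step, assume the statement for values less than $i$ and consider $\nu_n^i(M)\in\modu\Lambda$ with $i\geq 2$. It suffices to establish $\nu_n^{i-1}(M)\in\modu\Lambda$: applying the inductive hypothesis (at index $i-1$) to $M$ then yields $\nu_n^j(M)\in\modu\Lambda$ for every $0\leq j\leq i-1$, which combined with the assumption completes the claim.

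For the reduction I would combine two cohomological amplitude bounds. Writing $\nu \simeq D\Lambda\otimes^{\mathbf{L}}_{\Lambda}(-)$, the bound $\pdim_{\Lambda}D\Lambda\leq g\coloneqq\gldim\Lambda$ gives $\nu(\modu\Lambda)\subseteq\db(\modu\Lambda)^{[-g,0]}$, so iterating,
\[
\nu_n^{i-1}(M)\in\db(\modu\Lambda)^{[(i-1)(n-g),\,(i-1)n]}.
\]
Dually, from $\nu^{-}(\modu\Lambda)\subseteq\db(\modu\Lambda)^{[0,g]}$ and applying $\nu_n^{-}$ once to $\nu_n^i(M)\in\modu\Lambda$,
\[
\nu_n^{i-1}(M)=\nu_n^{-}(\nu_n^i(M))\in\db(\modu\Lambda)^{[-n,\,g-n]}.
\]
When $n=g$ the two intervals intersect precisely in $\{0\}$, which combined with the explicit identification $H^j(\nu_n X)\cong D\Ext^{n-j}_{\Lambda}(X,\Lambda)$ for $X\in\modu\Lambda$ forces $\nu_n^{i-1}(M)\in\modu\Lambda$ directly.

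The main obstacle is the case $n\neq g$. The subcase $n>g$ is vacuous: the two amplitude intervals become disjoint, so $\nu_n^{i-1}(M)=0$, and since $\nu_n$ is an auto-equivalence of $\db(\modu\Lambda)$ this forces $M=0$, making the hypothesis trivial. The genuine difficulty is $n<g$, where the intersection strictly contains $\{0\}$ and the amplitude bounds alone do not suffice. Here I would refine by promoting the bounds to vanishing statements: using $H^j(\nu_n X)\cong D\Ext^{n-j}_{\Lambda}(X,\Lambda)$ and the analogous formula $H^j(\nu_n^{-} Y)\cong \Ext^{n+j}_{\Lambda}(D\Lambda,Y)$ for $Y\in\modu\Lambda$, together with the hypercohomology spectral sequences for iterating $\nu_n$, one tracks any putative non-zero cohomology of $\nu_n^{i-1}(M)$ outside degree $0$ forward to $\nu_n^i(M)$ via the surviving edge maps at the extremes of the cohomological strip, contradicting the hypothesis. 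Part (b) then follows by the symmetric argument.
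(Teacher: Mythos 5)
Your argument is complete and correct in the range $n\ge\gldim\Lambda$, and there it is essentially the standard argument from \cite{HIO14} (the paper itself only cites the result): when $n\ge\gldim\Lambda$ your two amplitude bounds say precisely that $\nu_n$ preserves $\db(\modu\Lambda)^{\ge 0}$ and that $\nu_n^{-}$ preserves $\db(\modu\Lambda)^{\le 0}$, so $M\in\modu\Lambda$ forces $\nu_n^{j}(M)\in\db(\modu\Lambda)^{\ge 0}$ and $\nu_n^{i}(M)\in\modu\Lambda$ forces $\nu_n^{j}(M)=\nu_n^{-(i-j)}(\nu_n^{i}(M))\in\db(\modu\Lambda)^{\le 0}$ for all $0\le j\le i$; no induction is needed. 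The gap is your final paragraph: the case $n<\gldim\Lambda$ is not a technical difficulty to be overcome by a refined spectral-sequence argument, because the statement is \emph{false} in that generality. Take $\Lambda=\K(1\to 2\to 3)/\rad^2$, so $\gldim\Lambda=2$, and let $n=1$, $M=S_1$. From the projective resolution $0\to P_3\to P_2\to P_1\to S_1\to 0$ one computes $\Hom_\Lambda(S_1,\Lambda)=\Ext^1_\Lambda(S_1,\Lambda)=0$ and $\Ext^2_\Lambda(S_1,\Lambda)\ne 0$, whence $\nu(S_1)\simeq S_3[2]$ and $\nu_1(S_1)\simeq S_3[1]\notin\modu\Lambda$, while $\nu_1^2(S_1)\simeq\nu(P_3)=I_3\in\modu\Lambda$. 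So with $i=2$ the hypothesis of (a) holds but the conclusion fails at $j=1$. The missing ingredient is the hypothesis $\gldim\Lambda\le n$, which is a standing assumption in the part of \cite{HIO14} containing Proposition 2.3 and holds in every application made in this paper (any $n$-hereditary algebra has $\gldim\Lambda\le n$).

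The proposed repair also fails on its own terms. In the hyper-derived spectral sequence $E_2^{p,q}=H^p(\nu_n(H^q(X)))\Rightarrow H^{p+q}(\nu_n(X))$ for $X=\nu_n^{i-1}(M)$, the corner term in top total degree that survives to $E_\infty$ is $H^{t}(X)\otimes_\Lambda D\Lambda\cong D\Hom_\Lambda(H^{t}(X),\Lambda)$, where $t$ is the largest degree with $H^t(X)\neq 0$; its vanishing only yields $\Hom_\Lambda(H^{t}(X),\Lambda)=0$, not $H^{t}(X)=0$ (and dually at the bottom extreme), which is exactly the loophole the example above exploits. The correct fix is to record the hypothesis $\gldim\Lambda\le n$, after which your first two cases already constitute a complete proof and the third case does not arise.
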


With this setup in mind, we recall the following definitions.

\begin{defin}\label{def:n-RF RI hereditary} Let $n\geq 1$ be a positive integer and $\Lambda$ a finite-dimensional algebra.
\begin{enumerate}
    \item[(a)]\cite[Proposition 2.6]{HIO14} We say that $\Lambda$ is \emph{$n$-representation-finite} if for every indecomposable projective $\Lambda$-module $P\in\modu\Lambda$ there exists some $i\geq 0$ such that $\nu_n^{-i}(P)$ is an indecomposable injective $\Lambda$-module. 
    \item[(b)]\cite[Definition 2.7]{HIO14} We say that $\Lambda$ is \emph{$n$-representation-infinite} if $\gldim\Lambda\leq n$ and for every indecomposable projective module $P\in\modu\Lambda$ and for any $i\geq 0$ we have that $\nu_n^{-i}(P)\in\modu\Lambda$.
    \item[(c)]\cite[Theorem 3.4]{HIO14} We say that $\Lambda$ is \emph{$n$-hereditary} if $\Lambda$ is either $n$-representation-finite or $n$-representation-infinite.
\end{enumerate}
\end{defin}

We collect some important results for $n$-hereditary algebras in the following remark.

\begin{remark}\label{rem:n-representation-finite}
\begin{enumerate}
    \item[(a)] By \cite[Proposition 2.6] {HIO14} we have that $\Lambda$ is $n$-representation-finite if and only if $M=\bigoplus_{i\geq 0}\tau_n^{-i}(\Lambda)$ is an $n$-cluster tilting module and $\gldim(\Lambda)\leq n$. Let $X\in\add(M)$ be non-projective. Since $\pdim(X)\geq n$, we conclude that $\pdim(X)=n$. Similarly, if $X\in\add(M)$ is non-injective, then $\idim(X)=n$.
    \item[(b)] Assume that $\Lambda$ is $n$-representation-finite. Then the functor $\tau_n$ gives a bijection from the isomorphism classes of indecomposable non-projective modules in $\add(M)$ to the isomorphism classes of indecomposable non-injective modules in $\add(M)$, with inverse given by $\tau_n^{-}$, see \cite[Theorem 2.8]{ARtheoryrevisited}. In this case we obtain
    \[
    M = \bigoplus_{i\geq 0} \tau_n^{-i}(\Lambda) = \bigoplus_{i\geq 0} \tau_n^{i}(D\Lambda).
    \]
    In particular, for each indecomposable projective module $P\in\modu\Lambda$, there exists a unique  $\ell_P>0$ such that $\tau_n^{-(\ell_P-1)}(P)$ is indecomposable injective and $\tau_n^{-\ell_P}(P)=0$. If for all indecomposable projective modules $P,P'\in\modu\Lambda$ we have $\ell_P=\ell_{P'}=:\ell$, then we say that $\Lambda$ is \emph{$\ell$-homogeneous} see \cite[Definition 1.2]{HerschendIyama10}. Thus, in general, if $\Lambda=\K Q/\I$ is a bound quiver algebra, then we obtain
    \begin{itemize}
        \item[$\bullet$] a bijection $\sigma:Q_0\to Q_0$, and
        \item[$\bullet$] for each vertex $v\in Q_0$ a positive integer $\ell_v$
    \end{itemize}
    such that $\tau_n^{-(\ell_v-1)}(e_v\Lambda)\isom D(\Lambda e_{\sigma(v)})$ and $\tau_n^{\ell_v-1}(D\Lambda e_v) \isom e_{\sigma^{-1}(v)}\Lambda$.
    \item[(c)] By part (a) and the definition of $n$-hereditary algebras, it follows that if $\Lambda$ is $n$-hereditary, then $\gldim(\Lambda)\leq n$.
    \item[(d)] Let $M\in\modu\Lambda$ be a module and assume that $\nu_n^{-i}(M)\in\modu\Lambda$. Then by (\ref{eq:connect taun and nakayama}) we have that 
    \begin{equation}\label{eq:taun and nun coincide}
    \tau_n^{-i}(M) = H^{0}(\nu_n^{-i}(M)) = \nu_n^{-i}(M).
    \end{equation}
    Assume that $\Lambda$ is $n$-representation-finite. Let $P\in\modu\Lambda$ be indecomposable projective. Then by (\ref{eq:taun and nun coincide}) and Proposition \ref{prop:nun remains in mod Lambda} we obtain that
    \begin{equation}\label{eq:where taun and nun are the same}
    \tau_n^{-j}(P)=\nu_n^{-j}(P)
    \end{equation}
    for $0\leq j\leq \ell_P-1$. 
    Similarly, if $\Lambda$ is $n$-representation-infinite we have that (\ref{eq:where taun and nun are the same}) holds for all $j\geq 0$. Corresponding statements hold by replacing projective with injective, $\tau_n^{-}$ with $\tau_n$ and $\nu_n^{-}$ with $\nu_n$.
\end{enumerate}
\end{remark}

We now recall two more definitions.

\begin{defin}\cite{HerschendIyama10}\label{def:higher preprojective}
Let $\Lambda$ be an $n$-representation-finite algebra. We denote by
\[
\Pi = \Pi(\Lambda) \coloneqq \bigoplus_{i\in\ZZ} \Hom_{\derived}(\Lambda,\nu_{n}^{-i}(\Lambda))
\]
the \emph{$(n+1)$-preprojective algebra}.
\end{defin}

\begin{defin}\cite[Def 0.3]{HerschendIyama10}\label{def:fractionally Calabi--Yau}
Let $\Lambda$ be a finite-dimensional algebra. We say that $\Lambda$ is \emph{twisted fractionally Calabi--Yau of dimension $\tfrac{m}{l}$} if the functor $\nu^{l}$ is isomorphic to the functor $(-)_{\phi}\circ[m]$ for some integers $m$ and $l\neq 0$ and $\phi$ an automorphism of $\Lambda$.
\end{defin}

The following general statements concerning twisted fractionally Calabi--Yau algebras are important for us.

\begin{thm}\cite[Theorem 1.1 and Theorem 1.3]{HerschendIyama10}\label{thm:twisted fractionally CY and n-RF}
Let $\Lambda$ be a finite-dimensional connected algebra, and let $\ell$ and $n$ be positive integers.  
\begin{enumerate}
    \item[(a)] If $\Lambda$ is $n$-representation-finite, then $\Lambda$ is twisted fractionally Calabi--Yau.
    \item[(b)] $\Lambda$ is $\ell$-homogeneous $n$-representation-finite if and only if $\Lambda$ is twisted $\tfrac{n(\ell-1)}{\ell}$-Calabi--Yau and $\gldim(\Lambda)\leq n$.
\end{enumerate}
\end{thm}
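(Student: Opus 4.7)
The plan is to extract from $n$-representation-finiteness the orbital behavior of the Nakayama functor on the indecomposable projectives and then package it as a functor isomorphism on $\derived$. For part (a), Remark \ref{rem:n-representation-finite}(b) supplies, for each indecomposable projective $e_v\Lambda$, a positive integer $\ell_v$ and a vertex bijection $\sigma$ with $\tau_n^{-(\ell_v-1)}(e_v\Lambda)\cong D(\Lambda e_{\sigma(v)})$. Combining this with (\ref{eq:where taun and nun are the same}) and the identity $\nu^{-1}\circ D(\Lambda e_w)\cong e_w\Lambda$, the statement rewrites in $\derived$ as
\[
\nu^{\ell_v}(e_{\sigma(v)}\Lambda)\cong e_v\Lambda[n(\ell_v-1)].
\]
Iterating around a $\sigma$-orbit of size $p$ through $v$ and setting $L_v=\sum_{i=0}^{p-1}\ell_{\sigma^i(v)}$, this yields $\nu^{L_v}(e_v\Lambda)\cong e_v\Lambda[n(L_v-p)]$, so orbit-wise $\nu$ acts as a shift up to a permutation of indecomposable projectives. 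To promote these orbit identities to a single functor isomorphism $\nu^L\cong(-)_\phi\circ[m]$, I would exploit connectedness: if $v$ and $v'$ lie in distinct orbits and there is an arrow between them, then $\Hom_{\derived}(e_v\Lambda,e_{v'}\Lambda[k])\neq 0$ only for $k=0$, since higher $\Ext$'s vanish by projectivity. Applying the auto-equivalence $\nu^{L_vL_{v'}}$ to this $\Hom$ forces the orbit ratios $(L_v-p)/L_v$ and $(L_{v'}-p')/L_{v'}$ to coincide. Propagating across the connected quiver and taking a common multiple $L$ of the $L_v$'s then gives a uniform shift $m$, and the induced vertex permutation lifts to a $\K$-algebra automorphism $\phi$.

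For part (b), the forward direction is immediate: if $\ell_v=\ell$ uniformly, then every orbit contributes the ratio $(\ell-1)/\ell$, so $\nu^\ell\cong(-)_\phi\circ[n(\ell-1)]$, and the bound $\gldim\Lambda\leq n$ is Remark \ref{rem:n-representation-finite}(c). For the converse, assume $\nu^\ell\cong(-)_\phi\circ[n(\ell-1)]$ and $\gldim\Lambda\leq n$. Then $\nu_n^\ell\cong(-)_\phi\circ[-n]$, so for every indecomposable projective $P=e_v\Lambda$ one has $\nu_n^{-\ell}(P)\cong P_{\phi^{-1}}[n]$ and hence
\[
\nu_n^{-(\ell-1)}(P)\cong \nu(P_{\phi^{-1}})\cong D(\Lambda e_{\phi^{-1}(v)}),
\]
an indecomposable injective module. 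Since this lies in $\modu\Lambda$, Proposition \ref{prop:nun remains in mod Lambda}(b) guarantees that each intermediate $\nu_n^{-j}(P)$ for $0\leq j\leq\ell-1$ is a module. By Definition \ref{def:n-RF RI hereditary}(a), $\Lambda$ is $n$-representation-finite, and since the same $\ell$ works for each $v$, it is $\ell$-homogeneous.

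The main obstacle is the connectedness step in part (a): one must not merely equate the fractional Calabi--Yau dimensions orbit-by-orbit, but also verify that the various vertex bijections and shifts patch together into a single natural isomorphism of functors, with the vertex permutation lifting to a well-defined $\K$-algebra automorphism. Once this global assembly is carried out, the remainder of (a) and all of (b) reduce to formal manipulations of $\nu$ and $\nu_n$ using the structural results collected in Remark \ref{rem:n-representation-finite} and Proposition \ref{prop:nun remains in mod Lambda}.
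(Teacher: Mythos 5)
This statement is imported verbatim from \cite[Theorems 1.1 and 1.3]{HerschendIyama10}; the present paper gives no proof of its own, so your attempt can only be measured against the original argument of Herschend and Iyama, which it in fact reconstructs quite faithfully. Your derivation of $\nu^{\ell_v}(e_{\sigma(v)}\Lambda)\cong e_v\Lambda[n(\ell_v-1)]$ from Remark \ref{rem:n-representation-finite}, the orbit-wise identity $\nu^{L_v}(e_v\Lambda)\cong e_v\Lambda[n(L_v-p)]$, and the use of connectedness together with the vanishing of $\Hom_{\derived}(e_v\Lambda,e_{v'}\Lambda[k])$ for $k\neq 0$ to force all orbit ratios to agree are exactly the right mechanisms; and your treatment of part (b) in both directions is correct --- in particular, the observation that $\nu_n^{-(\ell-1)}(P)\cong\nu(P_{\phi^{-1}})$ is an indecomposable injective module, combined with Proposition \ref{prop:nun remains in mod Lambda} and Definition \ref{def:n-RF RI hereditary}(a), does yield $\ell$-homogeneous $n$-representation-finiteness, since an earlier $j$ with $\nu_n^{-j}(P)$ injective would force $\nu_n^{-(\ell-1)}(P)=0$.

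The one step you flag but do not actually supply is the only genuine gap: passing from the object-level isomorphism $\nu^L(\Lambda)\cong\Lambda[m]$ to the functor isomorphism $\nu^L\cong(-)_\phi\circ[m]$. Your phrasing that ``the induced vertex permutation lifts to a $\K$-algebra automorphism'' undersells what is required: $\phi$ is in general not determined by a permutation of vertices (it may act nontrivially on arrows), and the existence of some $\phi$ rests on the fact that $\nu^L\circ[-m]$ is a standard equivalence given by a two-sided tilting complex which, being isomorphic to $\Lambda$ as a one-sided object, must be an invertible $(\Lambda,\Lambda)$-bimodule concentrated in degree zero and free of rank one on each side, hence of the form ${}_1\Lambda_\phi$. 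This is precisely \cite[Proposition 4.3(a)]{HerschendIyama10}, which the paper itself invokes elsewhere (e.g.\ in the proof of Proposition \ref{prop:fractionally Calabi--Yau implies eigenvalues of Coxeter matrix are in unit circle}); it should be cited or proved rather than asserted. With that reference in place, your argument is complete.
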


\begin{prop}\label{prop:fractionally Calabi--Yau implies eigenvalues of Coxeter matrix are in unit circle}
Let $\Lambda$ be a twisted fractionally-Calabi--Yau algebra of finite global dimension. Then all the eigenvalues of the Coxeter matrix of $\Lambda$ are on the unit circle.
\end{prop}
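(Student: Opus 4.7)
The plan is to identify the Coxeter matrix $\Phi_\Lambda = -C_\Lambda^T C_\Lambda^{-1}$ with (the negative inverse of) the matrix representing the derived Nakayama functor $\nu$ on the Grothendieck group, and then to exploit the twisted fractional Calabi--Yau hypothesis to show that a suitable power of this matrix is a signed permutation matrix. The conclusion will then follow from an elementary eigenvalue argument.

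First, I would make the link between $\Phi_\Lambda$ and $\nu$ precise. Writing classes in $K_0(\db(\modu\Lambda))$ in the basis of simples $\{[S_1],\ldots,[S_n]\}$, one has $[P_i]=C_\Lambda^T e_i$, and, using the identity $\dim I_i e_j = \dim P_j e_i$, also $[I_i]=C_\Lambda e_i$. Since $\nu$ sends $P_i$ to $I_i$, the matrix $M_\nu$ representing $\nu$ on $K_0$ in the basis of simples satisfies $M_\nu C_\Lambda^T = C_\Lambda$, that is $M_\nu = C_\Lambda C_\Lambda^{-T}$; a direct computation then gives $\Phi_\Lambda = -M_\nu^{-1}$, so it suffices to prove that all eigenvalues of $M_\nu$ lie on the unit circle.

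Next, I would invoke the twisted fractional Calabi--Yau assumption: there exist integers $\ell\neq 0$ and $m$, and a $\K$-algebra automorphism $\phi$ of $\Lambda$, with $\nu^{\ell}\cong (-)_{\phi}\circ [m]$ as endofunctors of $\db(\modu\Lambda)$. Passing to $K_0$, the shift $[m]$ acts as multiplication by $(-1)^{m}$, while $(-)_{\phi}$ is given by tensoring with the invertible $\Lambda$-bimodule ${}_1\Lambda_{\phi}$ and therefore sends each simple $\Lambda$-module to a simple $\Lambda$-module; it follows that $(-)_\phi$ is represented on $K_0$ by a permutation matrix $P_\phi$. We obtain the identity $M_\nu^{\ell} = (-1)^{m}P_\phi$.

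Finally, since $P_\phi$ is a permutation matrix, all its eigenvalues are roots of unity, so every eigenvalue of $M_\nu^{\ell}=(-1)^m P_\phi$ has absolute value $1$. If $\mu$ is any eigenvalue of $M_\nu$, then $\mu^{\ell}$ is an eigenvalue of $M_\nu^{\ell}$, so $|\mu|^{\ell}=1$ and $|\mu|=1$; the identity $\Phi_\Lambda = -M_\nu^{-1}$ from the first step then transports this conclusion to $\Phi_\Lambda$. The main difficulty is entirely in the first step, where one must carefully match the sign and transpose conventions between the definition of the Coxeter matrix, the Cartan matrix (rows given by $\dimv(P_i)$), and the action of $\nu$; once this bookkeeping is in place the remainder of the argument is purely formal.
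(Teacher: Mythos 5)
Your proposal is correct and follows essentially the same route as the paper: both pass to the Grothendieck group, observe that the twisted fractional Calabi--Yau condition forces the matrix of $\nu^{\ell}$ to be a signed permutation matrix (the paper deduces this from $\nu^{\ell}(\Lambda)\isom\Lambda[m]$ permuting the indecomposable projectives, you from $(-)_{\phi}$ permuting the simples), and then extract the eigenvalue statement for $\nu$ and hence for the Coxeter matrix. The only cosmetic difference is that you carry out the identification of $\Phi_{\Lambda}$ with $-M_{\nu}^{-1}$ explicitly via the Cartan matrix, where the paper cites the literature for the corresponding fact about $\nu\circ[-1]$.
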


\begin{proof}
We follow \cite[Remark 1]{Han2022}. Let the Calabi--Yau dimension of $\Lambda$ be $\tfrac{m}{l}$. 
By \cite[Proposition 4.3(a)]{HerschendIyama10} we have that $\nu^l(\Lambda)\isom \Lambda[m]$. 
Hence $\nu^l\comp[-m]$ induces a group automorphism of the Grothendieck group of the derived category of $\Lambda$ with basis given by the indecomposable projective $\Lambda$-modules.
Moreover, the corresponding matrix $[\nu^{l}\comp[-m]]$ is a permutation matrix up to a sign and so has eigenvalues on the unit circle. 
The eigenvalues of $[\nu^{l}\comp[-m]]$ are, up to sign, equal to the eigenvalues of $[\nu^{l}]$, which are $l$-th powers of the eigenvalues of $[\nu]$, see \cite[Proposition 4.4.5]{MatrixMath}. It follows that the eigenvalues of $[\nu\circ [-1]]$ are on the unit circle. Since the Coxeter matrix corresponds to the induced action of the functor $\nu \circ [-1]$ on the Grothendieck group of the derived category of $\Lambda$ (see e.g.\ point (iii) at the start of the first section in \cite{Lenzing-delaPena'08}), the claim follows.
\end{proof}

\subsection{One-point extensions}

We recall the definition of a one-point extension of an algebra.

\begin{defin}\label{def:one-point extension}
Let $\Lambda$ be a finite-dimensional algebra and $M\in\modu \Lambda$. The \emph{one-point extension $\Lambda[M]$ of $\Lambda$ by $M$} is the algebra
\[
\Lambda[M] = 
\begin{bmatrix}
\K & M\\
0 & \Lambda
\end{bmatrix}.
\]
\end{defin}

Notice that if $\Lambda[M]$ is the one-point extension of $\Lambda$ by $M$, and if $\{e_1,\dots,e_k\}$ is a complete set of orthogonal primitive idempotents of $\Lambda$, then the Cartan matrix of $\Lambda[M]$ is given by 
\begin{equation}\label{eq:cartan matrix of one-point extension}
C_{\Lambda[M]} = 
\begin{bmatrix}
1 & \dimv(M)^T\\
0 & C_{\Lambda}
\end{bmatrix}.
\end{equation}

\begin{lem}\label{lem:quiver of one-point extension}
Let $\Lambda=\K Q/\I$ be a finite-dimensional algebra. Let $M\in\modu\Lambda$ be a module. Then the one-point extension $\Lambda[M]$ of $\Lambda$ by $M$ has a presentation $\Lambda[M]=\K Q[\omega]/\I'$ with
\begin{align*}
    Q[\omega]_0 &= Q_0\cup\{\omega\}, \\
    Q[\omega]_1 &= Q_1\cup\bigcup_{v\in Q_0}A_v, 
\end{align*}
where $A_v$ is a set of arrows $\omega\to v$ which is in bijection with a basis of $(\topu M)e_v$.
\end{lem}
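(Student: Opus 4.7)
The plan is to recover the bound quiver presentation of $\Lambda[M]$ by analysing its radical directly from the $2\times 2$ matrix description.

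First, identify a complete set of primitive orthogonal idempotents of $\Lambda[M]$: namely $e_\omega = \begin{bsmallmatrix}1 & 0 \\ 0 & 0\end{bsmallmatrix}$, together with $\tilde e_v = \begin{bsmallmatrix}0 & 0 \\ 0 & e_v\end{bsmallmatrix}$ for each $v\in Q_0$. This accounts for the vertex set $Q[\omega]_0 = Q_0\cup\{\omega\}$. Next, observe that the Jacobson radical of $\Lambda[M]$ is
\[
\rad(\Lambda[M]) = \begin{bmatrix}0 & M \\ 0 & \rad\Lambda\end{bmatrix},
\]
since this ideal is nilpotent (the nilpotency of $\rad\Lambda$ bounds the nilpotency of the whole matrix ideal) and the quotient by it is $\K\times\Lambda/\rad\Lambda$, which is semisimple. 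A direct computation then gives
\[
\rad^2(\Lambda[M]) = \begin{bmatrix}0 & M\rad\Lambda \\ 0 & \rad^2\Lambda\end{bmatrix},
\qquad
\rad/\rad^2 \;\isom\; \begin{bmatrix}0 & \topu M \\ 0 & \rad\Lambda/\rad^2\Lambda\end{bmatrix}.
\]

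Second, recall that for a basic algebra with primitive orthogonal idempotents $\{\epsilon_i\}$, the arrows $i\to j$ of its Gabriel quiver correspond to a $\K$-basis of $\epsilon_i(\rad/\rad^2)\epsilon_j$ (using the composition convention $\alpha\beta$ for $i\xrightarrow\alpha j\xrightarrow\beta k$). Applying this to the idempotents above and using the description of $\rad/\rad^2$, we compute case by case: $\tilde e_u(\rad/\rad^2)\tilde e_v \isom e_u(\rad\Lambda/\rad^2\Lambda)e_v$ recovers precisely the arrows of $Q$; $\tilde e_v(\rad/\rad^2)e_\omega = 0$ and $e_\omega(\rad/\rad^2)e_\omega = 0$ by the matrix shape; while
\[
e_\omega(\rad/\rad^2)\tilde e_v \;\isom\; (\topu M)e_v,
\]
giving exactly $\dim_{\K}(\topu M)e_v$ arrows $\omega\to v$, which we label by the set $A_v$. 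This yields the desired quiver $Q[\omega]$.

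Finally, since $\Lambda[M]$ is a finite-dimensional basic $\K$-algebra, there is a surjection $\K Q[\omega]\twoheadrightarrow\Lambda[M]$ inducing the identity on $\rad/\rad^2$, and its kernel $\I'$ is an admissible ideal, giving the presentation $\Lambda[M]=\K Q[\omega]/\I'$. The only point requiring care is the direction of the new arrows: with our composition convention, arrows $\omega\to v$ sit in $e_\omega\Lambda[M]\tilde e_v = \begin{bsmallmatrix}0 & Me_v\\ 0 & 0\end{bsmallmatrix}$, so the new generators come from $(\topu M)e_v$ and indeed point from $\omega$ into $Q_0$, rather than the other way around.
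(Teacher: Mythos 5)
Your proof is correct and is essentially the paper's argument in expanded form: the paper simply cites the standard construction of the quiver of a finite-dimensional algebra (\cite[Section II.3]{ASS06}), and your computation of the idempotents, of $\rad(\Lambda[M])$ and $\rad^2(\Lambda[M])$, and of the spaces $\epsilon_i(\rad/\rad^2)\epsilon_j$ is exactly how that construction plays out here. The care you take with the composition convention to get the arrows pointing from $\omega$ into $Q_0$ is the right detail to check, and it matches the paper's conventions (right modules, composition $\alpha\beta$ for $i\xrightarrow{\alpha}j\xrightarrow{\beta}k$).
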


\begin{proof}
Follows by the definition of a quiver of a finite-dimensional algebra, see for example \cite[Section II.3]{ASS06}.
\end{proof}

\section{\texorpdfstring{$n$}{n}-levelled algebras}
\label{Section:n-levelled algebras}
\subsection{Definition and basic properties}

We recall the following notion.

\begin{defin}\cite[Definition 3.1]{Hille'94}\label{def:n-levelled}
Let $Q$ be a quiver. 
\begin{enumerate}
    \item[(a)] We say that $Q$ is \emph{$n$-levelled} if there is a surjective function $\level \colon Q_0 \rightarrow \{0,1, \ldots, n\}$ such that if $\alpha \colon v \rightarrow u$ is an arrow in $Q$ then $\level(v) = \level(u) + 1$. For a vertex $v\in Q_0$ we call $\level(v)$ the \emph{level} of $v$. 
    \item[(b)] Let $\Lambda=\K Q/\I$ be a bound quiver algebra. We say that $\Lambda$ is \emph{$n$-levelled} if $Q$ is $n$-levelled. In this case, if $M\in\modu \Lambda$ is a module, then we say that \emph{$M$ is supported at level $i$} if $M$ is supported at a vertex $v$ with $\level(v)=i$. We also denote
\begin{align*}
\maxlevel(M) &= \max\{ i\in \{0,1,\ldots,n\}\mid \text{$M$ is supported at level i}\},\\
\minlevel(M) &= \min\{ i\in \{0,1,\ldots,n\} \mid \text{$M$ is supported at level i}\}.
\end{align*}
\end{enumerate}
\end{defin}

Note that the definition in \cite{Hille'94} has some additional assumptions which do not matter for us since all our quivers are connected.

\begin{example}\label{ex:bipartite quiver is 1-levelled}
Let $G$ be a bipartite loopless multigraph with a coloring $V=X \cup Y$. Let $Q$ be the quiver with underlying graph $\graph{Q}=G$ and bipartite orientation, that is the arrows in $Q$ are oriented from the vertices in $X$ to the vertices in $Y$. Define 
\[
\level(v)=\begin{cases} 1, &\mbox{if $v\in X$,} \\ 0, &\mbox{if $v\in Y$.}\end{cases}
\]
Then $Q$ is a $1$-levelled quiver and $\K Q$ is a $1$-levelled algebra.
\end{example}

We have the following easy observations.

\begin{lem}\label{lem:n-levelled basic properties}
Let $\Lambda=\K Q/\I$ be an $n$-levelled algebra. Then the following hold.
\begin{enumerate}
    \item[(a)] The quiver $Q$ is acyclic and  $\Lambda$ is finite-dimensional.
    \item[(b)] Let $M\in\modu\Lambda$. Then 
    \begin{enumerate}
        \item[(b1)] $\ell\ell(M)\leq \maxlevel(M)-\minlevel(M)+1$. Moreover, if there exists a path $p$ from a vertex at level $\maxlevel(M)$ to a vertex at level $\minlevel(M)$ such that $Mp\neq 0$, then equality holds.
        \item[(b2)] $\pdim(M)\leq \maxlevel(M)$. Moreover, if $\pdim(M)=n$, then $\maxlevel(M)=n$.
        \item[(b3)] $\idim(M)\leq n-\minlevel(M)$. Moreover, if $\idim(M)=n$, then $\minlevel(M)=0$.
    \end{enumerate}
    \item[(c)] $\gldim(\Lambda)\leq n$.
\end{enumerate}
\end{lem}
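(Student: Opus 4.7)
Part (a) is a direct unpacking of the definition: every arrow strictly decreases $\level$ by one, so every directed path strictly decreases $\level$. Since $\level$ takes values in the finite set $\{0,1,\dots,n\}$, no directed cycle can exist and every path has length at most $n$. Combined with $Q_0$ being finite (implicit in $\Lambda$ being an algebra in the paper's conventions), this shows $\K Q$ has a basis of paths of length $\leq n$, so $\K Q$ and hence $\Lambda=\K Q/\I$ are finite-dimensional.

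For part (b1), $\ell\ell(M)$ is the smallest $k$ with $M\cdot\rad^k(\Lambda)=0$. A nonzero element of $M\cdot\rad^k(\Lambda)$ arises from a nonzero path $q$ of length $k$ whose source $v$ and target $u$ both belong to the support of $M$; then $k=\level(v)-\level(u)\leq \maxlevel(M)-\minlevel(M)$, giving the bound. The assumption on the path $p$ in the moreover part supplies the matching lower bound $\ell\ell(M)\geq \maxlevel(M)-\minlevel(M)+1$.

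The substantive step is (b2), which I would prove by tracking levels through a minimal projective resolution. If $N\in\modu\Lambda$ is supported at levels $\leq k$, then $\topu(N)$ is supported at levels $\leq k$, so the projective cover $P(N)=\bigoplus e_{v_i}\Lambda$ has each $v_i$ at level $\leq k$; but $\rad(e_{v_i}\Lambda)$ is supported only at targets of arrows out of $v_i$, all of which lie at level $\level(v_i)-1$. Thus $\maxlevel(\Omega N)\leq k-1$. Iterating gives $\maxlevel(\Omega^iM)\leq\maxlevel(M)-i$, and after $i=\maxlevel(M)$ steps $\Omega^iM$ is supported only at level $0$. Since vertices at level $0$ admit no outgoing arrows, the associated simples $S_v$ equal $e_v\Lambda$ and are projective; hence $\Omega^{\maxlevel(M)}M$ is semisimple projective, giving $\pdim(M)\leq\maxlevel(M)$. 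The moreover part is the contrapositive.

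Part (b3) I would deduce by duality: the opposite algebra $\Lambda^{\op}$ is $n$-levelled via $\level^{\op}(v):=n-\level(v)$, and for $M\in\modu\Lambda$ the standard duality $DM\in\modu\Lambda^{\op}$ satisfies $\pdim_{\Lambda^{\op}}(DM)=\idim_\Lambda(M)$ and $\maxlevel^{\op}(DM)=n-\minlevel(M)$. Applying (b2) to $DM$ over $\Lambda^{\op}$ yields both the bound and the moreover statement. Finally, (c) is immediate from (b2) since $\maxlevel(M)\leq n$ for every $M\in\modu\Lambda$. The only non-formal step is the level-descent argument in (b2); once that is in place, everything else is either a direct consequence of the defining level condition or follows by the standard duality between $\Lambda$ and $\Lambda^{\op}$.
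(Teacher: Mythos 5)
Your proof is correct and follows essentially the same route as the paper: the key step in both is that passing to $\rad$ (for Loewy length) or to a syzygy (for projective dimension) lowers the maximum level by at least one, with (b3) obtained by passing to $\Lambda^{\op}$ and (c) a formal consequence of (b2). One minor imprecision worth fixing: $\rad(e_{v_i}\Lambda)$ is supported at \emph{all} vertices reachable from $v_i$ by a nonzero path of positive length, not only at the targets of arrows out of $v_i$; since all such vertices still lie at level at most $\level(v_i)-1$, the bound $\maxlevel(\Omega N)\leq k-1$ that you actually use remains valid.
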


\begin{proof}
That $Q$ is acyclic follows immediately by the definition of an $n$-levelled quiver. Hence $\Lambda$ is finite-dimensional and part (a) holds. Part (c) follows immediately from part (b). It remains to show part (b). Since $\Lambda$ is $n$-levelled, it readily follows that 
\[
\minlevel(M) \leq \minlevel(\rad M) \leq \maxlevel(\rad M) \leq \maxlevel(M)-1.
\]
Hence the inequality in (b1) follows by induction on $\maxlevel(M)-\minlevel(M)$ and using $\ell\ell(M)=\ell\ell(\rad M)+1$. Furthermore, if such a path $p$ as in (b1) exists, then 
\[
\rad^{\maxlevel(M)-\minlevel(M)}(M) = M \rad^{\maxlevel(M)-\minlevel(M)}(\Lambda) \supseteq Mp \neq 0,
\]
and equality in (b1) holds.

Next, since $\Omega M\subseteq \rad M$, we also have that 
\[
\maxlevel(\Omega M)\leq \maxlevel(M)-1.
\]
Hence $\pdim(M)\leq \maxlevel(M)$ follows by induction on $\maxlevel(M)$ and using $\pdim(M)=\pdim(\Omega M)+1$. Notice that since $\maxlevel(M)\leq n$, it follows that $\pdim M=n$ implies $\maxlevel(M)=n$. This shows part (b2). Part (b3) follows similarly.
\end{proof}

The following statement establishes some connections between $n$-levelled algebras and the Koszul property. 

\begin{prop}\label{prop:Koszul and n-levelled}
Let $\Lambda=\K Q/\I$ be an $n$-levelled algebra. Then the following hold.  
\begin{enumerate}
    \item[(a)] Let $v,u\in Q_0$. If $\Ext^{i}_{\Lambda}(e_u\Lambda_0, e_v\Lambda_0) \neq 0$ implies that $i=\level(v)-\level(u)$, then $\Lambda$ is Koszul.
\end{enumerate}
Assume moreover that $\Lambda$ is Koszul.
\begin{enumerate}
    \item[(b)] The Koszul dual $\Lambda^!$ is $n$-levelled and we have a triangulated equivalence $\db (\modu \Lambda)\isom \db (\modu \Lambda^!)$ given by a tilting complex $\widetilde{\Lambda_0} := \bigoplus_{u \in Q_0} e_u \Lambda_0 [n-\level(u)]$.
    \item[(c)] Let $\Lambda$ be twisted $\tfrac{m}{l}$-Calabi--Yau with twisting automorphism $\phi$. 
    For a vertex $v\in Q_0$, let $\phi(v)\in Q_0$ be the unique vertex such that $(e_v\Lambda)_{\phi}\isom e_{\phi(v)}\Lambda$. If  $\level (\phi(v)) = \level (v)$ for all $v \in Q_0$, then the Koszul dual $\Lambda^{!}$ is twisted $\tfrac{m}{l}$-Calabi--Yau as well.
\end{enumerate}
\end{prop}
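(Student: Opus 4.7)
The plan is to handle the three parts in succession. Part (a) recognises Koszulity via the natural grading on an $n$-levelled algebra, part (b) transfers this through a tilting complex whose cohomological shifts cancel the internal grading shifts, and part (c) transports the Calabi--Yau structure along the resulting derived equivalence, where the level-invariance of $\phi$ is exactly what prevents extra cohomological shifts from appearing.

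For (a), I would first note that an $n$-levelled algebra carries a canonical non-negative grading by path length: since any two parallel paths between two vertices of $Q$ have the same length (equal to the difference of their levels), the ideal $\I$ is automatically homogeneous, and hence $\Lambda$ becomes graded with $\Lambda_0 = \K Q_0$ semisimple. A direct analysis of the minimal projective resolution of each simple $e_u\Lambda_0$ shows that any occurrence of $P_v$ as a summand of the $i$-th term appears in internal degree equal to the difference of levels of $u$ and $v$; consequently, every non-zero class in $\Ext^i_\Lambda(e_u\Lambda_0, e_v\Lambda_0)$ is concentrated in this single internal degree. Combined with the hypothesis, which forces $i$ itself to equal this level difference whenever the Ext is non-zero, one obtains $\Ext^i_{\gr\Lambda}(\Lambda_0, \Lambda_0\langle j\rangle) = 0$ whenever $i \neq j$, which is the Koszul condition.

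For (b), first observe that the opposite quiver $Q^{\op}$ is again $n$-levelled via the reversed level function $\level^{\op}(v) := n - \level(v)$; since the quiver of $\Lambda^!$ is $Q^{\op}$ by Subsection \ref{subsec:Koszul algebras}, $\Lambda^!$ is $n$-levelled. For the triangulated equivalence, the strategy is to invoke the Koszul duality theorem of Beilinson--Ginzburg--Soergel in the graded setting, which provides an equivalence $\db(\gr\Lambda) \simeq \db(\gr\Lambda^!)$ satisfying $\End^\bullet_{\db(\gr\Lambda)}(\Lambda_0) \simeq \Lambda^!$. To descend to the ungraded derived categories, one must replace each internal shift $\langle j\rangle$ by a cohomological shift $[j]$; the complex $\widetilde{\Lambda_0} = \bigoplus_u e_u\Lambda_0[n - \level(u)]$ is designed precisely to absorb this replacement. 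One then verifies, using the Koszul Ext vanishing from part (a), that $\Hom_{\db(\modu\Lambda)}(\widetilde{\Lambda_0}, \widetilde{\Lambda_0}[i]) = 0$ for $i \neq 0$ and that $\End(\widetilde{\Lambda_0}) \simeq \Lambda^!$. Since the simples generate $\db(\modu\Lambda)$ as a thick subcategory, so does $\widetilde{\Lambda_0}$, making it a tilting complex, and Rickard's theorem yields the claimed equivalence.

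For (c), let $F\colon \db(\modu\Lambda) \to \db(\modu\Lambda^!)$ denote the equivalence from (b). Since $F$ intertwines the two Serre functors, one has $\nu_{\Lambda^!}^l \simeq F \circ \nu_\Lambda^l \circ F^{-1} \simeq F \circ \bigl((-)_\phi \circ [m]\bigr) \circ F^{-1}$. The twist $(-)_\phi$ on $\modu\Lambda$ permutes the indecomposable projectives, and hence the simple summands $e_u\Lambda_0$ of $\Lambda_0$, according to $\phi$. After shifting to form $\widetilde{\Lambda_0}$, the summand $e_u\Lambda_0[n - \level(u)]$ would be sent by the conjugated functor to $e_{\phi(u)}\Lambda_0[n - \level(\phi(u))]$; the hypothesis $\level(\phi(v)) = \level(v)$ ensures that the two cohomological shifts coincide, so the conjugated functor permutes the summands of $\widetilde{\Lambda_0}$ without introducing any extraneous cohomological shift, and therefore descends to a genuine algebra automorphism $\phi^!$ of $\Lambda^! = \End(\widetilde{\Lambda_0})$. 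Since $[m]$ commutes with $F$, this yields $\nu_{\Lambda^!}^l \simeq (-)_{\phi^!} \circ [m]$, proving that $\Lambda^!$ is twisted $\tfrac{m}{l}$-Calabi--Yau. The main obstacle is precisely this last transport step: one must check carefully that the conjugate of $(-)_\phi$ really descends to an algebra automorphism of $\Lambda^!$, rather than merely to an autoequivalence that differs from an automorphism by a non-trivial cohomological shift, and this is exactly what the level-invariance assumption provides.
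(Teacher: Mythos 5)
Your proposal is correct and, for part (b), essentially coincides with the paper's argument (rigidity of $\widetilde{\Lambda_0}$ from the Koszul condition, generation of $\db(\modu\Lambda)$ by the simples, plus finite global dimension). The differences are in (a) and (c). For (a) the paper simply cites \cite[Lemma 3.1]{Hille'94}, whereas you supply the underlying argument: the path-length grading is well defined because parallel paths in an $n$-levelled quiver have equal length, and a minimal graded projective resolution of a simple places each $P_w$ in the internal degree dictated by the levels, so the hypothesis collapses the graded $\Ext$ to the diagonal $i=j$; this is a legitimate filling-in of the citation (just be careful with the sign convention relating arrows $u\to v$ to $\Ext^1_\Lambda(e_u\Lambda_0,e_v\Lambda_0)$ and to $\level(v)-\level(u)$). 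For (c) the routes genuinely diverge: the paper avoids constructing a twisting automorphism of $\Lambda^!$ altogether by reducing, via $\epsilon_i\Lambda^!\simeq\mathbf{R}\Hom_\Lambda(\widetilde{\Lambda_0},\epsilon_i\widetilde{\Lambda_0})$ and uniqueness of the Serre functor, to the object-level statement $\nu^l_{\Lambda^!}(\Lambda^!)\simeq\Lambda^![m]$, which is equivalent to being twisted fractionally Calabi--Yau by \cite[Proposition 4.3(a)]{HerschendIyama10}. You instead conjugate $(-)_\phi\circ[m]$ through the equivalence and claim the resulting autoequivalence ``descends to a genuine algebra automorphism $\phi^!$'' because it permutes the summands of $\widetilde{\Lambda_0}$ without shifts. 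That last step is true but not automatic: an autoequivalence fixing the regular module up to isomorphism is a twist by an automorphism only via a standard-equivalence argument, which is precisely what the cited Proposition 4.3(a) packages. You should either invoke that proposition (at which point your argument and the paper's converge) or supply the derived Picard group argument; as written this is the one step taken on faith. Both approaches correctly isolate the level-invariance of $\phi$ as the reason no extraneous cohomological shift appears.
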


\begin{proof}
\begin{enumerate}
    \item[(a)] 
    This follows by \cite[Lemma 3.1]{Hille'94}.
    \item[(b)] That $\Lambda^!$ is $n$-levelled follows by the fact that its quiver is the opposite of the quiver of $\Lambda$. 
    For the second claim, we note that it can be found (implicitly) in \cite{Hille'94}, but we also include a proof for the convenience of the reader: 
    Observe that our assumption that $\Lambda$ is Koszul combined with part (a) above implies that 
    $\Hom_{\db (\modu \Lambda)}(\widetilde{\Lambda_0}, \widetilde{\Lambda_0} [i]) = 0$ for $i \neq 0$. 
    Since $\Lambda$ is finite-dimensional we know that $\Thick (\Lambda_0) = \db (\modu \Lambda)$. It is straightforward to check that $\Thick (\Lambda_0) = \Thick (\widetilde{\Lambda_0})$, and this then proves the second claim as $\Lambda$ is of finite global dimension. 
    \item[(c)] Let $\epsilon_i = \sum_{\level(j) = i} e_j$. Since $\level(\phi(v))=\level(v)$ for all $v\in Q_0$, we have that $\phi$ acts as a bijection between the vertices of $Q_0$ which are on the same level. Hence $(\epsilon_i \Lambda_0)_{\phi}\isom \epsilon_i\Lambda_0$. Then since $\Lambda$ is twisted $\tfrac{m}{l}$-Calabi--Yau with twisting automorphism $\phi$ we obtain
    \[
    \nu^{l}_{\Lambda}(\epsilon_i \Lambda_0) \isom (\epsilon_i \Lambda_0)_{\phi}[m]\isom \epsilon_{i} \Lambda_0 [m].
    \]

    Now, recall that we let $\widetilde{\Lambda_0} = \bigoplus_{u \in Q_0} e_u \Lambda_0 [n-\level(u)]$, and observe that $\epsilon_i \Lambda^{!} \simeq \mathbf{R}\Hom_{\Lambda}(\widetilde{\Lambda_0}, \epsilon_i \widetilde{\Lambda_0})$ 
    where $\epsilon_i \widetilde{\Lambda_0} = \bigoplus_{\level(j) = i} e_j \Lambda_0 [n - i]$. 
    Consequently, $\epsilon_i \Lambda^{!}$  is sent to (a cohomological shift of) $\epsilon_i \Lambda_0$ by the quasi-inverse of $\mathbf{R}\Hom_{\Lambda}(\widetilde{\Lambda_0}, -)$.
    By the uniqueness of the Serre functor, we thus get that $\nu_{\Lambda^!}^l (\Lambda^!) \simeq \Lambda^![m]$, and so the claim follows by \cite[Proposition 4.3(a)]{HerschendIyama10}. \qedhere 
\end{enumerate}
\end{proof}

\begin{example}\label{ex:2-levelled is Koszul}
Let $\Lambda=\K Q/\I$ be a $2$-levelled algebra. Then $\I$ is generated by quadratic relations and $\gldim\Lambda\leq 2$ by Lemma \ref{lem:n-levelled basic properties}. Hence $\Lambda$ is Koszul by Example \ref{ex:quadratic relations Koszul}.
\end{example}

\subsection{\texorpdfstring{$n$}{n}-levelled algebras and higher homological algebra}

In this section we investigate some properties of algebras which are both $n$-representation-finite and $n$-levelled. We start with some useful properties of indecomposable summands of the $n$-cluster tilting module of such an algebra.

\begin{prop}\label{prop:Loewy length and support for n-RF n-levelled}
Let $\Lambda=\K Q/\I$ be an $n$-representation-finite and $n$-levelled algebra for $n \geq 1$. Let $M=\bigoplus_{i\geq 0}\tau_n^{-i}(\Lambda)=\bigoplus_{i\geq 0}\tau_n^{i}(D\Lambda)$. Let $X$ be an indecomposable summand of $M$, Then the following hold: 
\begin{enumerate}
    \item[(a)] If $X$ is not projective, then $\maxlevel(X)=n$.
    \item[(b)] If $X$ is not injective, then $\minlevel(X)=0$.
    \item[(c)] If $X$ is neither projective nor injective, then $\ell\ell(X)=n+1$.
\end{enumerate}
\end{prop}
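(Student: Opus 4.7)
Parts (a) and (b) are immediate. Since $X$ is a non-projective summand of the $n$-cluster-tilting module $M$ of an $n$-representation-finite algebra, Remark~\ref{rem:n-representation-finite}(a) yields $\pdim(X) = n$, and the second assertion of Lemma~\ref{lem:n-levelled basic properties}(b2) gives $\maxlevel(X) = n$, establishing (a). Part (b) follows dually, using $\idim(X) = n$ together with Lemma~\ref{lem:n-levelled basic properties}(b3) to conclude $\minlevel(X) = 0$.

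For (c), the upper bound $\ell\ell(X) \leq n+1$ is immediate from (a), (b), and the first assertion of Lemma~\ref{lem:n-levelled basic properties}(b1). By the equality assertion of the same lemma, the matching lower bound reduces to showing that $\rad^n X \neq 0$, or equivalently, to exhibiting a path of length $n$ in $Q$ that acts non-trivially on $X$.

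My plan is to argue by contradiction. Suppose $\rad^n X = 0$. Since every path of length $n$ in the $n$-levelled quiver $Q$ starts at a level-$n$ vertex and ends at a level-$0$ vertex, the submodule $N := X_n \Lambda$ satisfies $N_0 = 0$. On the other hand, the level-$0$ part $X_0$ is always a submodule of $X$ (as level-$0$ vertices are sinks) and is nonzero by part (b), so $X_0 \cap N = 0$. The strategy is to reach a contradiction with the indecomposability of $X$ by establishing the decomposition $X = N \oplus X_0$. The cleanest route is to show that $\topu(X)$ is concentrated at level $n$, whence $X = X_n \Lambda = N$, and the contradiction $X_0 \subseteq N_0 = 0$ follows immediately from $\minlevel(X) = 0$. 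The case of a level-$0$ summand $S_u$ of $\topu(X)$ can be excluded directly: $S_u = e_u\Lambda$ is simple projective (since $u$ has no outgoing arrows) and simultaneously lies in $\soc X$, so a standard top-meets-socle argument gives $X \cong S_u$, contradicting $\pdim(X) = n \geq 1$.

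The main obstacle is ruling out summands of $\topu(X)$ at intermediate levels $0 < k < n$, which is not a purely levelled-quiver statement. I expect this step to invoke higher Auslander--Reiten theory: the $n$-cluster-tilting property of $M$ together with its description $M = \bigoplus_{i\geq 0} \tau_n^{-i}(\Lambda)$, combined with Ext-vanishing in $\add(M)$ and the behaviour of the Nakayama functor, should constrain the top of $X$ to force its concentration at level $n$ for any non-projective, non-injective indecomposable $X \in \add(M)$.
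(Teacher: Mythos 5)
Parts (a) and (b), and the reduction of (c) to producing a length-$n$ path acting non-trivially on $X$, all match the paper and are fine; the exclusion of level-$0$ summands of $\topu(X)$ via splitting the surjection onto the simple projective $e_u\Lambda$ is also correct. However, the proof is not complete: the entire content of part (c) is concentrated in the claim that $\topu(X)$ is supported only at level $n$ (equivalently, that $X$ is generated by its level-$n$ components), and you explicitly leave this unproven, offering only the expectation that ``the $n$-cluster-tilting property \ldots should constrain the top of $X$.'' This is not a routine verification that can be waved at: for a general indecomposable module over an $n$-levelled algebra the top can certainly be supported at intermediate levels (e.g.\ the representation with all components $\K$ and all maps the identity over $v_2\to v_1\to v_0\leftarrow v_1'$ has top $S_{v_2}\oplus S_{v_1'}$), so any argument must genuinely use that $X$ lies in $\add(M)$, and you have not identified which consequence of that hypothesis does the work. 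As it stands, the hard half of (c) is missing.

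For comparison, the paper attacks exactly this point head-on rather than by contradiction: it forms the subrepresentation $Y$ with $Y_v=\sum_{p}\Img(f_p)$, the sum over paths $p$ from level-$n$ vertices to $v$ (i.e.\ the submodule generated by the level-$n$ components), asserts that $Y$ is a direct summand of $X$, and concludes $X\isom Y$ by indecomposability; since $X_0\neq 0$ by part (b), some length-$n$ path then acts non-trivially. Note that ``$Y=X$'' is precisely your unproven claim in different clothing (it is equivalent to $\topu(X)$ being concentrated at level $n$), so you have correctly located the crux of the argument --- you just have not supplied it. If you want to complete your write-up, you should either justify why this trace submodule splits off (it does not for arbitrary indecomposables, as the example above shows, so the cluster-tilting hypothesis must enter) or find the higher-AR-theoretic input you allude to and make it explicit.
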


\begin{proof}
Assume that $X$ is not projective. Then $\pdim(X)=n$ by Remark \ref{rem:n-representation-finite}(a). Then part (a) follows by Lemma \ref{lem:n-levelled basic properties}(b2). Part (b) follows similarly. For part (c), we have by parts (a) and (b) that $X$ is supported at level $n$ and at level $0$. We write $X=(X_v,f_{\alpha})$ as a representation of $Q$ bound by $\I$. By Lemma \ref{lem:n-levelled basic properties}(b1) it is enough to show that there exists a path $p$ starting at a vertex at level $n$ and terminating at a vertex at level $0$ such that $f_p$ is a nonzero linear map. For a vertex $v\in Q_0$ let $U_i(v)$ denote the set of all paths in $Q$ starting at a vertex at level $i$ and terminating at the vertex $v$. Consider the representation $Y=(Y_v,g_{\alpha})$ given by
\[
Y_v = \sum_{p\in U_n(v)} \Img(f_p)
\]
and $g_{\alpha}$ given by the restriction of $f_{\alpha}$ to $Y_{s(\alpha)}$. It is easy to see that $Y$ is isomorphic to a direct summand of $X$. Since $X$ is indecomposable, it follows that $X\isom Y$ and so $Y$ is supported at levels $n$ and $0$. By definition of $Y$ there exists a path $p$ starting at a vertex at level $n$ and terminating at a vertex at level $0$ such that $g_p$ is a nonzero linear map, as required. 
\end{proof}

We continue with an important property of the indecomposable projective modules of the higher preprojective algebra in our setting.

\begin{prop}\label{prop:projectives have the same length}
Let $\Lambda$ be an algebra which is $n$-levelled, $n$-representation-finite and Koszul. Then the following hold.
\begin{enumerate}
    \item[(a)] All the indecomposable projective modules of the $(n+1)$-preprojective algebra $\Pi\coloneqq \Pi(\Lambda)$ of $\Lambda$ have the same Loewy length.
    \item[(b)] Let $v,u\in Q_0$ be two vertices. If $\level(v)=\level(u)$, then $\ell\ell_{\Lambda}(e_v\Lambda)=\ell\ell_{\Lambda}(e_u\Lambda)$.
\end{enumerate}
\end{prop}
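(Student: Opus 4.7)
The plan is to address both parts by exploiting the interplay among the $n$-levelled, $n$-representation-finite, and Koszul structures. Under these hypotheses, by Remark \ref{rem:n-representation-finite}(a) the $n$-cluster tilting module $M = \bigoplus_{i \geq 0} \tau_n^{-i}(\Lambda) = \bigoplus_{i \geq 0} \tau_n^{i}(D\Lambda)$ exists, and the $(n+1)$-preprojective algebra $\Pi$ admits a nonnegative grading with $\Pi_i = \Hom_{\db(\modu\Lambda)}(\Lambda, \nu_n^{-i}(\Lambda))$ and radical $\rad \Pi = \rad \Lambda \oplus \Pi_{\geq 1}$.

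For part (b), given vertices $v, u \in Q_0$ with $\level(v) = \level(u) =: i$, I would show $\ell\ell_\Lambda(e_v \Lambda) = \ell\ell_\Lambda(e_u \Lambda) = i + 1$. By Lemma \ref{lem:n-levelled basic properties}(b1), $\ell\ell(e_v\Lambda) \leq \level(v) - \minlevel(e_v\Lambda) + 1$. By Proposition \ref{prop:Loewy length and support for n-RF n-levelled}(b), if $e_v \Lambda$ is not injective then $\minlevel(e_v \Lambda) = 0$, reducing the upper bound to $i + 1$; this is attained by exhibiting a nonzero path of length $i$ from $v$ down to some vertex at level $0$, which exists because $e_v \Lambda$ is supported at level $0$. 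Hence $\ell\ell(e_v\Lambda) = i + 1$ whenever $e_v \Lambda$ is non-injective. When $e_v \Lambda$ is injective, we have $e_v \Lambda \simeq D(\Lambda e_{\sigma(v)})$ with socle $S_{\sigma(v)}$, and its Loewy length equals $i - \level(\sigma(v)) + 1$; to match, one must show that the bijection $\sigma$ from Remark \ref{rem:n-representation-finite}(b) sends projective-injective vertices at level $i$ to vertices of level $0$, and more generally that the level of $\sigma(v)$ depends only on the level of $v$.

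For part (a), the plan is to compute the Loewy length of each indecomposable projective $\Pi$-module directly. Using the radical decomposition $\rad \Pi = \rad \Lambda \oplus \Pi_{\geq 1}$ together with Proposition \ref{prop:Loewy length and support for n-RF n-levelled}, the composition factors of each such module can be organized according to the position of the corresponding indecomposable summand of $M$ in the $n$-cluster tilting subcategory $\add M$; the required uniformity of Loewy lengths should then follow from the regularity imposed on chains of irreducible morphisms by the $n$-levelled and Koszul hypotheses. The main obstacle for both parts is controlling the orbit lengths $\ell_v$ and the bijection $\sigma$ uniformly with respect to the leveling. This rigidity is expected to follow from the twisted fractionally Calabi--Yau property (Theorem \ref{thm:twisted fractionally CY and n-RF}(a)), which forces orbits of $\nu_n^{-1}$ on projectives to return periodically to shifts of projectives; combining this periodicity with the $n$-levelled constraint should force both $\ell_v$ and $\level(\sigma(v))$ to depend only on $\level(v)$, thereby completing both parts.
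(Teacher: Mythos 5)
There are genuine gaps in both parts, and the approach to part (b) rests on a false intermediate claim.

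For part (b), your strategy is to prove the stronger statement $\ell\ell_{\Lambda}(e_v\Lambda)=\level(v)+1$. Your argument for non-injective projectives is fine (a nonzero path from $v$ to a support vertex at level $0$ necessarily has length $\level(v)$), but the claim fails for projective--injectives, and the property of $\sigma$ you say ``one must show'' is false. Take $\Lambda=\K S_{(1,1)}/(a_1b_1)$, i.e.\ $x_1\xrightarrow{a_1}z\xrightarrow{b_1}y_1$ with $a_1b_1=0$: this is $2$-levelled, $2$-representation-finite and Koszul, yet $e_{x_1}\Lambda\isom D(\Lambda e_z)$ has Loewy length $2\neq\level(x_1)+1=3$, and $\sigma(x_1)=z$ sits at level $1$, not level $0$. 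So $\sigma$ need not send projective--injective vertices to level $0$, and the uniform formula you aim for does not hold. The weaker statement that $\level(\sigma(v))$ depends only on $\level(v)$ is not established either: you assert it ``should follow'' from the twisted fractionally Calabi--Yau property, but no argument is given, and in the paper a statement of this kind (Proposition \ref{prop:2-levelled light and 2-RF is twisted fractionally Calabi--Yau}(a)) is proved only for $2$-levelled $i$-light algebras and its proof \emph{uses} the present proposition, so this route risks circularity. The paper instead deduces (b) from (a): the quiver of $\Pi$ is $Q$ with extra arrows only from level $n$ to level $0$, longest nonzero paths in $\Pi$ compute the Loewy lengths of the $e_v\Pi$, and comparing the $Q$-segments of two such maximal paths starting at vertices of equal level forces the maximal nonzero $\Lambda$-paths from $v$ and $u$ to have the same length.

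For part (a), no proof is actually offered. The decomposition $\rad\Pi=\rad\Lambda\oplus\Pi_{\geq 1}$ is correct but does not by itself control Loewy lengths, and ``the required uniformity \ldots should then follow from the regularity imposed on chains of irreducible morphisms'' is not an argument. The essential inputs, which your outline never touches, are that $\Pi$ is \emph{selfinjective} and \emph{almost Koszul} (both from Grant--Iyama), hence graded with relations concentrated in a single degree, at which point a theorem of Mart\'{\i}nez-Villa on graded selfinjective algebras yields that all indecomposable projective $\Pi$-modules have the same Loewy length. Without selfinjectivity of $\Pi$ (a nontrivial fact) there is no reason to expect the conclusion, so this is a missing idea rather than an omitted routine verification.
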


\begin{proof}
\begin{enumerate}
    \item[(a)] By \cite[Corollary 4.13]{GrantIyama2020} we have that the algebra $\Pi$ is selfinjective. By \cite[Theorem 4.21]{GrantIyama2020} we have that $\Pi$ is almost Koszul in the sense of \cite{BBK}. In particular, $\Pi$ has only quadratic relations and thus necessarily has homogeneous relations with respect to the grading putting each arrow in degree $1$. Hence $\Pi$ satisfies the requirements of \cite[Theorem 3.3]{MartinezVilla99} from which the claim follows.
    \item[(b)] Let $w\in Q_0$ be a vertex and $S=S(w)$ be the simple $\Lambda$-module corresponding to $w$. By Lemma \ref{lem:n-levelled basic properties}(b2) we have that $\pdim(S)\leq n$ and if $\pdim(S)=n$, then $\level(w)=n$. In this case it can be easily seen that if $e_x\Lambda$ appears as a direct summand in the last nonzero term of the projective resolution of $S$, then $\level(x)=0$. It follows by \cite[Section 3]{GrantIyama2020} that the quiver of $\Pi$ is given by the quiver $Q$ with possibly some more arrows from vertices at level $n$ to vertices at level $0$ (note that the referenced article uses left modules while we use right modules). Let $l_v^{\Pi}$ be the length of a longest path $p_v^{\Pi}$ in $\Pi$ starting at the vertex $v$; similarly define $l_u^{\Pi}$ and $p_u^{\Pi}$. Then $l_v^{\Pi}=\ell\ell_{\Pi}(e_v\Pi)$ and $l_u^{\Pi}=\ell\ell_{\Pi}(e_u\Pi)$ and so by part (a) we obtain $l_v^{\Pi}=l_u^{\Pi}$. Since $\Lambda$ is $n$-levelled, and by the description of the arrows in $\Pi$, the path $p_v^{\Pi}$ is of the form $p_v^{\Pi}=\alpha_i\alpha_{i+1}\cdots \alpha_{i+k}$ where $\alpha_j$ is an arrow from a vertex in level $j$ to a vertex in level $j-1$ or from a vertex in level $0$ to a vertex in level $n$ and similarly for $p_u^{\Pi}$. Consider the subpath $p_v^{\Lambda}$ of $p_v^{\Pi}$ given by considering only the arrows in the quiver $Q$ and similarly define $p_u^{\Lambda}$. By their construction and since $\level(v)=\level(u)$, it follows that $p_v^{\Lambda}$ and $p_u^{\Lambda}$ have the same length. 
    Note that a path in $\Lambda$ is nonzero if and only if it is nonzero as a path in $\Pi$ since $\Lambda$ can be considered as the degree $0$ part of $\Pi$ with respect to some grading. 
    Hence, it follows that $p_v^{\Lambda}$ and $p_u^{\Lambda}$ are paths of maximal length in $\Lambda$ starting at $v$ and $u$ respectively. Since they have the same length, it follows that $\ell\ell_{\Lambda}(e_v\Lambda)=\ell\ell_{\Lambda}(e_u\Lambda)$. \qedhere
\end{enumerate}
\end{proof}

We are especially interested in the following special cases of $2$-levelled quivers.

\begin{defin}\label{def:light 2-levelled}
A $2$-levelled quiver $Q$ is called \emph{$i$-light} if the level $i$ is the only level which consists of exactly one vertex. A $2$-levelled bound quiver algebra $\Lambda=\K Q/\I$ is called \emph{$i$-light} if $Q$ is a $2$-levelled $i$-light quiver.
\end{defin}

A crucial example for us is the following.

\begin{defin}\label{def:star algebra}
    Let $r,s$ be integers $\geq 1$. The \emph{$(r,s)$-star quiver} $S_{(r,s)}$ is the quiver 
    \[
        \begin{tikzpicture}[scale=1, transform shape, baseline={(current bounding box.center)}]]
        \node (v1) at (-0.5,1.75) {$x_1$};
        \node (vr) at (-0.5,0.25) {$x_r$};
        
        \node (u1) at (2.5,1.75) {$y_1$};
        \node (us) at (2.5,0.25) {$y_s$.};                
        \node (z) at (1,1) {$z$};

        \draw[loosely dotted] (-0.5,0.8) to (-0.5,1.2);         
        \draw[loosely dotted]  (2.5,0.8) to (2.5,1.2);
        \draw[->] (v1) to node[above] {$a_1$}  (z);
        \draw[->] (vr) to node[below] {$a_r$}  (z);
        \draw[->] (z) to node[above] {$b_1$}  (u1);
        \draw[->] (z) to node[below] {$b_s$}  (us);
        \end{tikzpicture}
        \]
    We say that a bound quiver algebra $\Lambda=\K Q/\I$ is an \emph{$(r,s)$-star algebra} if $Q=S_{(r,s)}$.
\end{defin}

In particular a quiver $Q$ is $2$-levelled and $1$-light if and only if $Q=S_{(r,s)}$ for some $r,s\geq 2$. Note also that for a $2$-levelled quiver $Q$ we have that $Q$ is $i$-light if and only if $Q^{\text{op}}$ is $(2-i)$-light, and similarly for a $2$-levelled bound quiver algebra $\Lambda$. In the special case of $i$-light $n$-levelled algebras and under the assumption of $2$-representation-finiteness of $\Lambda$ we know the Loewy length of indecomposable projective and indecomposable injective $\Lambda$-modules.

\begin{lem}\label{lem:Loewy length of projectives light}
Let $\Lambda=\K Q/\I$ be a $2$-levelled $i$-light and $2$-representation-finite finite-dimensional algebra. Let $v\in Q_0$ be a vertex. Then the following hold.
\begin{enumerate}
    \item[(a)] $\ell\ell_{\Lambda}(e_v\Lambda)=\level(v)+1$.
    \item[(b)] $\ell\ell_{\Lambda}(D\Lambda e_v)=3-\level(v)$
\end{enumerate}
\end{lem}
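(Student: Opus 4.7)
The plan is to first reduce the problem via Koszulity and Proposition \ref{prop:projectives have the same length}(b), and then handle the remaining non-trivial case by contradiction via the preprojective algebra. Lemma \ref{lem:n-levelled basic properties}(b1) gives $\ell\ell(e_v\Lambda) \leq \level(v) + 1$, so it suffices to show the reverse inequality. Since $\Lambda$ is $2$-levelled, Example \ref{ex:2-levelled is Koszul} shows that $\Lambda$ is Koszul, and then Proposition \ref{prop:projectives have the same length}(b) gives that $\ell\ell(e_v\Lambda)$ depends only on $\level(v)$; write $\ell_k$ for this common value. The equality $\ell_0 = 1$ is immediate since level $0$ vertices are sinks, and $\ell_1 \geq 2$ follows from connectedness of $Q$ combined with the non-emptiness of levels $0$ and $1$, which forces some arrow from level $1$ to level $0$ to exist.

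The key remaining step is to show $\ell_2 = 3$. I would argue by contradiction, assuming $\ell_2 \leq 2$. Since every length-$2$ path in a $2$-levelled quiver starts at a level $2$ vertex, this is equivalent to $\rad^2(\Lambda) = 0$. Passing to the higher preprojective algebra $\Pi = \Pi(\Lambda)$, one uses that its quiver equals the quiver of $\Lambda$ together with additional arrows from level $0$ to level $2$ (as in the proof of Proposition \ref{prop:projectives have the same length}(b)). For any level $2$ vertex $v$, every path in $\Pi$ starting at $v$ must begin with a $Q$-arrow, and any composition of two consecutive $Q$-arrows is zero by our assumption, so $\ell\ell_\Pi(e_v\Pi) = 2$. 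Proposition \ref{prop:projectives have the same length}(a) then gives that the common Loewy length of indecomposable projective $\Pi$-modules is $2$, so $\rad^2(\Pi) = 0$. Combined with the selfinjectivity of $\Pi$ from \cite[Corollary 4.13]{GrantIyama2020}, each indecomposable projective $\Pi$-module has simple socle, and hence every vertex in the quiver of $\Pi$ has exactly one outgoing arrow.

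The contradiction is then obtained by case analysis. For $i = 1$, the unique level $1$ vertex $z$ must have at least $s \geq 2$ outgoing $Q$-arrows in order for each of the $\geq 2$ level $0$ vertices to receive an incoming arrow (by connectedness of $Q$), violating the single outgoing arrow condition. For $i = 0$, the unique level $0$ vertex $u$ has at most one outgoing arrow in $\Pi$, but the additional arrows at $u$ count non-zero length-$2$ compositions $v \to w \to u$ in $Q$ up to relations in $\Lambda$, and a connectedness argument forces each of the $r \geq 2$ level $2$ vertices to contribute such a composition, giving $u$ at least $r \geq 2$ outgoing arrows in $\Pi$, a contradiction. The case $i = 2$ reduces to $i = 0$ by passing to $\Lambda^{\op}$, which is $2$-levelled, $0$-light, and $2$-representation-finite. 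The main obstacle here is the $i = 0$ case, where the combinatorial argument requires some care: one must check that no single level $1$ ``bridge'' vertex can simultaneously connect all level $2$ vertices to $u$ while only contributing a single additional arrow.

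Part (b) then follows by applying (a) to $\Lambda^{\op}$, which is $(2-i)$-light, $2$-levelled, and $2$-representation-finite. The duality $D$ gives $D\Lambda e_v \cong D(\Lambda e_v)$ as right $\Lambda$-modules and preserves Loewy length, so $\ell\ell_\Lambda(D\Lambda e_v) = \ell\ell_{\Lambda^{\op}}(e_v\Lambda^{\op}) = (2 - \level(v)) + 1 = 3 - \level(v)$.
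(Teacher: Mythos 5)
Your overall strategy is sound and, in the crucial case $\level(v)=2$, genuinely different from the paper's. The paper does not pass through the preprojective algebra there at all: for $i=1$ it invokes \cite[Lemma 4.24]{ST24} to produce two nonzero length-two paths out of $v$ directly, and for $i=0$ it derives $\rad^2(\Lambda)=0$ exactly as you do but then concludes immediately by citing \cite[Lemma 2.3]{VASradsquarezero} (a radical square zero $2$-representation-finite algebra cannot have a vertex with two incoming arrows). Your route --- deducing $\rad^2(\Pi)=0$ from Proposition \ref{prop:projectives have the same length}(a), combining this with selfinjectivity of $\Pi$ to force at most one outgoing arrow at every vertex of the quiver of $\Pi$, and then exhibiting a vertex with at least two --- is self-contained and replaces the second citation. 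The steps $\ell\ell_{\Pi}(e_v\Pi)=2$ for level-$2$ vertices (using that a path of $Q$-arrows is zero in $\Pi$ iff it is zero in $\Lambda$), the identification $\rad(e_v\Pi)=\soc(e_v\Pi)$ simple, the $i=1$ contradiction at $z$, the reduction of $i=2$ to $i=0$ via $\Lambda^{\op}$, and part (b) are all correct and match or legitimately replace the paper's arguments.

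There is, however, one concrete misstatement that, taken literally, breaks the $i=0$ case. You assert that the additional arrows of $\Pi$ at the unique level-$0$ vertex $u$ ``count non-zero length-$2$ compositions $v\to w\to u$ in $Q$ up to relations in $\Lambda$''. This is backwards: the new arrows $u\to v$ of the $3$-preprojective algebra are in bijection with a basis of $\Ext^2_{\Lambda}(S_v,S_u)$, i.e.\ with the minimal relations from $v$ to $u$ --- precisely the length-two paths of $Q$ that are \emph{zero} in $\Lambda$. Under your standing assumption $\rad^2(\Lambda)=0$ there are no nonzero length-two compositions at all, so your count as stated yields zero new arrows and no contradiction. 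The fix is immediate: every level-$2$ vertex has an outgoing arrow (it has no incoming ones and $Q$ is connected), and every level-$1$ vertex has an arrow to $u$ (this is your $\ell_1\geq 2$ combined with the fact that $u$ is the only possible target), so each of the $r\geq 2$ level-$2$ vertices $v$ admits a length-two path to $u$ in $Q$; all such paths are relations, hence $\Ext^2_{\Lambda}(S_v,S_u)\neq 0$ and there is at least one new arrow $u\to v$ for each $v$. These arrows have distinct targets, so $u$ has at least $r\geq 2$ outgoing arrows in $\Pi$, the desired contradiction. This also disposes of the ``bridge vertex'' worry you flag at the end: the number of new arrows is governed by the $\Ext^2$-spaces between simples at level $2$ and level $0$, not by how the intermediate level-$1$ vertices are shared.
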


\begin{proof}
\begin{enumerate}
    \item[(a)] If $\level(v)=0$, then $v$ is a sink and the claim follows immediately. If $\level(v)=1$, then it is enough to show that there exists an arrow $\alpha\in Q_1$ starting at $v$. Assume to a contradiction that $\level(v)=1$ and there is no arrow from $v$ to the unique vertex in level $0$. Then $\ell\ell_{\Lambda}(e_v\Lambda)=1$. It follows by Proposition \ref{prop:projectives have the same length}(b) that the indecomposable projective at every vertex at level $1$ has Loewy length $1$. In particular, there exists no arrow from any vertex at level $1$ to the vertex at level $0$, contradicting $Q$ being connected and $2$-levelled.

    Now assume that $\level(v)=2$. If $i=1$, then $Q=S_{(r,s)}$ is a star quiver with $r,s\geq 2$. Let $\alpha:v\to z$ be the unique arrow starting at the vertex $v$. By \cite[Lemma 4.24]{ST24} we have that there exist at least $2$ arrows $\beta$, $\beta'$ starting at $z$ such that $\alpha\beta$ and $\alpha\beta'$ are nonzero in $\Lambda$. It follows that $\ell\ell_{\Lambda}(e_v\Lambda)$ is at least $3$. Clearly it can be at most $3$ which shows the claim in this case.

    Now assume that $i\in\{0,2\}$. Since $\Lambda$ is $0$-light if and only if $\Lambda^{\text{op}}$ is $2$-light and using the duality $D$ between $\modu\Lambda$ and $\modu\Lambda^{\text{op}}$, it is enough to show the result for $i=0$. Hence assume that $i=0$. Clearly there exists an arrow starting at $v$ since $Q$ is $2$-levelled and connected. Hence $\ell\ell_{\Lambda}(e_v\Lambda)\geq 2$. By Lemma \ref{lem:n-levelled basic properties}(b1) it is enough to show that there exists a nonzero path from $v$ to the unique vertex at level $0$. Assume to a contradiction that no such path exists. Then $\ell\ell_{\Lambda}(e_v\Lambda)=2$ and by Proposition \ref{prop:projectives have the same length}(b) we conclude that the same is true for all vertices at level $0$. Hence $\Lambda$ is a radical square zero algebra since $\Lambda$ is $2$-levelled. Then the unique vertex $v$ with $\level(v)=0$ has at least $2$ incoming arrows, since $\Lambda$ is $2$-levelled $0$-light. But this contradicts $\Lambda$ being $2$-representation-finite via \cite[Lemma 2.3]{VASradsquarezero}.
    
    \item[(b)] Notice that $\Lambda^{\text{op}}$ is also an $n$-levelled algebra with $\level_{\Lambda^{\text{op}}}(v)=2-\level_{\Lambda}(v)$ for a vertex $v\in Q_0$. Then using part (a) we have
    \[
    \ell\ell_{\Lambda}(D\Lambda e_v) = \ell\ell_{\Lambda^{\text{op}}}(e_v \Lambda^{\text{op}}) = \level_{\Lambda^{\text{op}}}(v)+1 = 2-\level_{\Lambda}(v)+1 = 3-\level_{\Lambda}(v)
    \]
    as required. \qedhere
\end{enumerate}
\end{proof}

The following is the main result for this section.

\begin{prop}\label{prop:2-levelled light and 2-RF is twisted fractionally Calabi--Yau}
Let $\Lambda=\K Q/\I$ be a $2$-levelled $i$-light and $2$-representation-finite finite-dimensional algebra. Let $\sigma:Q_0\to Q_0$ be as in Remark \ref{rem:n-representation-finite}(b). Let $v\in Q_0$ be a vertex.
\begin{enumerate}
    \item[(a)] We have $\level(v)=\level(\sigma(v))$ and $\ell\ell_{\Lambda}(e_v\Lambda)+\ell\ell_{\Lambda}(D\Lambda e_{\sigma(v)})=4$. 
    \item[(b)] There exists some positive integer $\ell$ such that $\Lambda$ is $\ell$-homogeneous. In particular, $\Lambda$ is twisted $\frac{2(\ell-1)}{\ell}$-Calabi--Yau.
    \item[(c)] The algebra $\Lambda$ is Koszul and the Koszul dual $\Lambda^!$ is $2$-levelled and $2$-representation-finite.
\end{enumerate}
\end{prop}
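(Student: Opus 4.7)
The plan is to establish the three parts in order, with part (a) as the crux; parts (b) and (c) then follow fairly mechanically. For part (a), I would first use Lemma \ref{lem:Loewy length of projectives light} to reduce the Loewy-length identity to the level identity $\level(v) = \level(\sigma(v))$: indeed,
\[
\ell\ell_\Lambda(e_v\Lambda) + \ell\ell_\Lambda(D\Lambda e_{\sigma(v)}) = (\level(v)+1) + (3 - \level(\sigma(v))) = 4 + \level(v) - \level(\sigma(v)).
\]

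To prove $\level(v) = \level(\sigma(v))$, I would pass to the $3$-preprojective algebra $\Pi = \Pi(\Lambda)$. As recalled in the proof of Proposition \ref{prop:projectives have the same length}(b), the quiver of $\Pi$ is obtained from $Q$ by adding ``jump'' arrows from level-$0$ vertices to level-$2$ vertices, so $\Lambda$-arrows decrease the $\Lambda$-level by one while the new arrows increase it by two. Combining the orbit description $\tau_2^{-(\ell_v-1)}(e_v\Lambda) \simeq D\Lambda e_{\sigma(v)}$ from Remark \ref{rem:n-representation-finite}(b) with the selfinjectivity of $\Pi$, any longest path in $\Pi$ from $v$ ends at $\sigma(v)$ and contains exactly $\ell_v - 1$ jump arrows; solving for the number of $\Lambda$-arrows from the net level change yields
\[
\ell\ell_\Pi(e_v\Pi) = 3\ell_v - 2 + \level(v) - \level(\sigma(v)).
\]
Proposition \ref{prop:projectives have the same length}(a) forces this quantity to be constant in $v$, so $K := 3\ell_v + \level(v) - \level(\sigma(v))$ is a well-defined invariant of $\Lambda$. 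A short case analysis, using $i$-lightness to pin down the contribution from the unique vertex at level $i$ together with the range constraint $\level(v) - \level(\sigma(v)) \in \{-2,\ldots,2\}$, then forces $\ell_v$ to be independent of $v$ and $\level(v) = \level(\sigma(v))$.

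For part (b), the constancy of $\ell_v$ from the above argument gives an integer $\ell$ making $\Lambda$ $\ell$-homogeneous; combined with $\gldim\Lambda \leq 2$ from Lemma \ref{lem:n-levelled basic properties}(c), Theorem \ref{thm:twisted fractionally CY and n-RF}(b) yields that $\Lambda$ is twisted $\tfrac{2(\ell-1)}{\ell}$-Calabi--Yau. For part (c), Koszulness of $\Lambda$ is Example \ref{ex:2-levelled is Koszul}, and Proposition \ref{prop:Koszul and n-levelled}(b) gives that $\Lambda^!$ is $2$-levelled (so $\gldim\Lambda^! \leq 2$ by Lemma \ref{lem:n-levelled basic properties}(c)). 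To invoke Proposition \ref{prop:Koszul and n-levelled}(c), I need to identify the twisting automorphism $\phi$ of $\Lambda$ on vertices and verify that it preserves levels; a short derived-category computation using $\nu_2^{-(\ell-1)}(e_v\Lambda) \simeq D\Lambda e_{\sigma(v)}$ and $\nu^\ell \simeq (-)_\phi \circ [2(\ell-1)]$ shows $\phi$ agrees with $\sigma^{-1}$ on vertices, whence level-preservation follows from part (a). Proposition \ref{prop:Koszul and n-levelled}(c) then gives that $\Lambda^!$ is twisted $\tfrac{2(\ell-1)}{\ell}$-Calabi--Yau, and applying the converse direction of Theorem \ref{thm:twisted fractionally CY and n-RF}(b) shows that $\Lambda^!$ is $\ell$-homogeneous $2$-representation-finite.

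The main obstacle is the case analysis in part (a): deriving the formula $\ell\ell_\Pi(e_v\Pi) = 3\ell_v - 2 + \level(v) - \level(\sigma(v))$ rigorously (especially the identification of longest paths in the quiver of $\Pi$ with the $\tau_2^{-}$-orbit endpoints via the selfinjective socle structure), and then carefully using $i$-lightness to extract from the invariant $K$ both the constancy of $\ell_v$ and the equality $\level(v) = \level(\sigma(v))$.
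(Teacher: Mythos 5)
Your proposal is correct and follows essentially the same route as the paper's proof: pass to the selfinjective $3$-preprojective algebra, use the constancy of $\ell\ell_{\Pi}(e_v\Pi)$ from Proposition \ref{prop:projectives have the same length}(a) to obtain the invariant $3\ell_v+\level(v)-\level(\sigma(v))$, reduce modulo $3$, and use $i$-lightness to handle the unique level-$i$ vertex, with parts (b) and (c) then argued exactly as in the paper. The only differences are minor: the paper derives $\ell\ell_{\Pi}(e_v\Pi)=3\ell_v-2+\level(v)-\level(\sigma(v))$ by citing the Loewy-length sum formula of Grant--Iyama together with Proposition \ref{prop:Loewy length and support for n-RF n-levelled}(c) and Lemma \ref{lem:Loewy length of projectives light}, rather than by your path-counting in the quiver of $\Pi$, and in your ``short case analysis'' you should make explicit the paper's key observation that the mod-$3$ congruence forces $\sigma$ to permute the level sets (hence preserve their cardinalities), which is what yields $\sigma(x)=x$ for the unique vertex $x$ at level $i$.
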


\begin{proof} Let $\ell_v>0$ be as in Remark \ref{rem:n-representation-finite}(b). Let $\Pi=\Pi(\Lambda)$ be the $3$-preprojective algebra of $\Lambda$. By \cite[Proposition 2.12]{GrantIyama2020} we have that the Loewy length of $e_v\Pi$ satisfies
\begin{equation}\label{eq:computation of Loewy length 1} 
    \ell\ell_{\Pi}(e_v\Pi) = \sum_{j=0}^{l_v-1}\ell\ell_{\Lambda}\left(\nu_2^{-j}(e_v\Lambda)\right).
\end{equation}
By Remark \ref{rem:n-representation-finite}(d) we have that $\nu_2^{-j}(e_v\Lambda)=\tau_2^{-j}(e_v\Lambda)$ for every $0\leq j\leq \ell_v-1$. If $0<j<\ell_v-1$, then $\tau_2^{-j}(e_v\Lambda)$ is neither projective nor injective and so $\ell\ell_{\Lambda}(\tau_2^{-j}(e_v\Lambda))=3$ by Proposition \ref{prop:Loewy length and support for n-RF n-levelled}(c). Hence (\ref{eq:computation of Loewy length 1}) becomes
\begin{equation*}
    \ell\ell_{\Pi}(e_v\Pi)=\ell\ell_{\Lambda}(e_v\Lambda) + 3(\ell_v-2)+\ell\ell_{\Lambda}(D\Lambda e_{\sigma(v)}).
\end{equation*}
Using Lemma \ref{lem:Loewy length of projectives light}, we can rewrite the above as
\begin{equation}\label{eq:computation of Loewy length 2}
    \ell\ell_{\Pi}(e_v\Pi)=4+\level(v) + 3(\ell_v-2)-\level(\sigma(v)).
\end{equation}
Moreover, by Proposition \ref{prop:projectives have the same length}(a) we have for any vertex $u\in Q_0$ that 
\begin{equation}\label{eq:equal Loewy length}
    \ell\ell_{\Pi}(e_v\Pi) = \ell\ell_{\Pi}(e_{u}\Pi).
\end{equation}

\begin{enumerate}
\item[(a)] 
For $j\in\{0,1,2\}$, let $Q_0^{j} = \{ x\in Q_0 \mid \level(x)=j\}$ be the set of vertices in $Q$ at level $j$. 
 Replacing (\ref{eq:computation of Loewy length 2}) in (\ref{eq:equal Loewy length}) and viewing the equality modulo $3$ we obtain
\begin{equation}\label{eq:computation of Loewy length 3}
\level(v)-\level(\sigma(v))\equiv \level(u)-\level(\sigma(u)) \mod{3}.
\end{equation}
In particular, we have that $\level(v)=\level(u)$ implies that $\level(\sigma(v))=\level(\sigma(u))$. Hence for any $j\in \{0,1,2\}$ there exists $\sigma(j)\in\{0,1,2\}$ such that $\sigma(Q_0^{j}) \subseteq Q_0^{\sigma(j)}$. Since $Q_0$ is the disjoint union of $Q_0^0$, $Q_0^1$ and $Q_0^2$ and $\sigma:Q_0\to Q_0$ is a bijection, we conclude that $\abs{Q_0^{j}}=\abs{Q_0^{\sigma(j)}}$. Let $x\in Q_0$ be the unique vertex with $\level(x)=i$. Then
\[
\abs{Q_0^{\sigma(i)}} = \abs{Q_0^i} = \abs{\{x\}} = 1.
\]
Since $Q$ is an $2$-levelled $i$-light quiver, we obtain that $\sigma(i)=i$ and so $\sigma(x)=x$. Since (\ref{eq:computation of Loewy length 3}) holds for any two vertices in $Q_0$, for $u=x$ we obtain that 
\[
\level(v)-\level(\sigma(v)) \equiv 0 \mod{3}.
\]
Since $\level(v),\level(\sigma(v))\in\{0,1,2\}$, we conclude that $\level(v)=\level(\sigma(v))$ for any $v\in Q_0$. Using this and Lemma \ref{lem:Loewy length of projectives light} we obtain
\[
\ell\ell_{\Lambda}(e_v\Lambda)+\ell\ell_{\Lambda}(D\Lambda e_{\sigma(v)}) =  \level(v)+1+3-\level(\sigma(v))=4,
\]
which shows part (a).

\item[(b)] By part (a) we have that (\ref{eq:computation of Loewy length 2}) becomes
\[
\ell\ell_{\Pi}(e_v\Pi)=4+3(\ell_v-2),
\]
for every $v\in Q_0$. Replacing this in (\ref{eq:equal Loewy length}) we obtain that $\ell_v=\ell_u$ for any $v,u\in Q_0$. Setting $\ell\coloneqq \ell_v$ we obtain that $\Lambda$ is $\ell$-homogeneous. Finally, $\Lambda$ is twisted $\tfrac{2(\ell-1)}{\ell}$-Calabi--Yau by Theorem \ref{thm:twisted fractionally CY and n-RF}.

\item[(c)] We have that $\Lambda$ is Koszul by Example \ref{ex:2-levelled is Koszul}. Hence the Koszul dual $\Lambda^!$ is Koszul by Proposition \ref{prop:Koszul and n-levelled}(b).

Next let $v\in Q_0$ be a vertex. By part (b) we have that $\Lambda$ is $\ell$-homogeneous. Hence Remark \ref{rem:n-representation-finite}(b) and (d) give that
\begin{equation}\label{eq:first use 2-rep-fin}
\nu_2^{\ell-1}(D \Lambda e_v) \isom \tau_2^{\ell-1}(D\Lambda e_v) \isom e_{\sigma^{-1}(v)} \Lambda.
\end{equation}
On the other hand, by part (b) we have that $\Lambda$ is twisted $\tfrac{2(\ell-1)}{\ell}$-Calabi--Yau. Using this and $\nu_2=\nu\circ[-2]$ we compute
\begin{align}\label{eq:then use twisted CY}
\begin{split}
\nu_2^{\ell-1}(D \Lambda e_v) &= (\nu\circ [-2])^{\ell-1} (D \Lambda e_v) \isom \nu^{-} \nu^{\ell}(D \Lambda e_v)[-2(\ell-1)] \\
&\isom \nu^{-} (D \Lambda e_{\phi(v)})[-2(\ell-1)][2(\ell-1)] \isom e_{\phi(v)}\Lambda.
\end{split}
\end{align}
Comparing (\ref{eq:first use 2-rep-fin}) and (\ref{eq:then use twisted CY}) we see that $e_{\sigma^{-1}(v)}\Lambda\isom e_{\phi(v)}\Lambda$ from which we conclude that the bijections $\sigma: Q_0\to Q_0$ and $\phi:Q_0\to Q_0$ are inverse to each other. By part (a) we conclude that the conditions of part (c) of Proposition \ref{prop:Koszul and n-levelled} are satisfied and so $\Lambda^!$ is twisted fractionally Calabi--Yau of dimension $\tfrac{2(\ell-1)}{\ell}$. 
Since $\gldim(\Lambda^!) = 2$, the claim follows by Theorem \ref{thm:twisted fractionally CY and n-RF}(b). \qedhere
\end{enumerate}
\end{proof}

\section{One-point extensions of bipartite hereditary algebras}
\label{Section:One-point extensions of bipartite hereditary algebras}
Our aim in this section is to derive a formula for the Coxeter polynomial of a one-point extension of a $1$-levelled algebra which satisfies certain conditions. Note that the Coxeter polynomial of a one-point extension algebra has been computed in \cite{Hap09}.

\subsection{Setup}

For the rest of the section we work under the following setup.

\begin{setup}\label{setup:graph, quiver, one-point extension}
We consider a quadruple $(Q,M,\Sigma_1,\Sigma_2)$ where $Q$ is a quiver with at least one arrow, $M\in\modu\K Q$ is a nonzero module and $\Sigma_1,\Sigma_2\in\RR$ such that:
\begin{enumerate}[(i)]
    \item The underlying graph $\graph{Q}$ of $Q$ is a bipartite loopless multigraph with coloring $X\cup Y$ and $Q$ has bipartite orientation as in Example \ref{ex:bipartite quiver is 1-levelled}. In particular $Q$ is a $1$-levelled quiver. Moreover, we denote the vertices in $X$ by $\{x_1,\dots,x_r\}$ and the vertices in $Y$ by $\{y_1,\dots,y_s\}$ and we order them by 
    \[
    x_1 < x_2 < \cdots < x_r < y_1 < \cdots <y_{s-1} < y_s.
    \]
    \item $\topu(M)$ is supported only in level $1$. We set $\Gamma = \K Q[M]$ and $d_M=\dimv(M)^T$. Furthermore, we set 
    \[
    d_x = (\dim_{\K}(Me_{x_1}),\dots,\dim_{\K}(Me_{x_r}))^T \text{ and } d_y = (\dim_{\K}(Me_{y_1}),\dots,\dim_{\K}(Me_{y_s}))^T,
    \]
    where $e_{x_1},\dots,e_{x_r},e_{y_1},\dots,e_{y_s}$ are the idempotents of $\K Q$ corresponding to the vertices $x_1,\dots,x_r,y_1,\dots,y_s$.
    \item $(d_x,d_y)$ is a bi-eigenvector of $A(\graph{Q})$ with bi-eigenvalue $(\Sigma_1,\Sigma_2)$.
\end{enumerate}
\end{setup}

By part (i) in Setup \ref{setup:graph, quiver, one-point extension} we have that the adjacency matrix $A(\graph{Q})$ has the form
\[
A(\graph{Q}) = 
\begin{bmatrix} 
0_r & R \\
R^T & 0_s
\end{bmatrix}
\]
where $R$ is an $r\times s$-matrix with nonnegative integer entries. 
By part (ii) in Setup \ref{setup:graph, quiver, one-point extension} we have $
d_M= \begin{bmatrix}
d_x \\
d_y
\end{bmatrix}$. We collect some properties that we have in this setting.

\begin{prop}\label{prop:main properties of setup}
\begin{enumerate}[(a)]
    \item The Cartan matrix of $\K Q$ respectively $\Gamma$ is
    \[
    C_{\K Q} =
    \begin{bmatrix}
    I_r & R \\
    0_s & I_s
    \end{bmatrix}
    \text{ respectively }
    C_{\Gamma} = 
    \begin{bmatrix}
    1 & d_x^T & d_y^T \\ 
    0_r & I_r & R \\
    0_s & 0_s & I_s
    \end{bmatrix}.
    \]
    The Coxeter polynomial of $\K Q$ respectively $\Gamma$ is
    \[
    \CP_{\K Q}(t) = \det(tC_{\K Q}+C_{\K Q}^T)
    \text{ respectively }
    \CP_{\Gamma}(t) = \det(tC_{\Gamma}+C_{\Gamma}^T).
    \]
    \item The algebra $\Gamma$ is a $2$-levelled $0$-light Koszul algebra.
    \item $\Sigma_1,\Sigma_2\in \QQ$, $\Sigma_1\Sigma_2\in \ZZ$ and $\Sigma_1\Sigma_2>0$. 
    \item $d_x\neq 0$ and $d_y\neq 0$. In particular, $d_x$ is an eigenvector of $RR^T$ with eigenvalue $\Sigma_1\Sigma_2$ and $d_y$ is an eigenvector of $R^TR$ with eigenvalue $\Sigma_1\Sigma_2$.
    \item $\Sigma_1\lvert d_x\rvert^2 = \Sigma_2 \lvert d_y \rvert^2$.
    \item The largest eigenvalue of $A(\graph{Q})$ is $\sqrt{\Sigma_1\Sigma_2}$.
\end{enumerate}
\end{prop}

\begin{proof}
That the Cartan matrix of $\K Q$ has the form 
\begin{equation}\label{eq:cartan matrix of kQ}
C_{\K Q} = 
\begin{bmatrix}
I_r & R \\ 
0_s & I_s
\end{bmatrix}
\end{equation}
follows immediately by part (i) of Setup \ref{setup:graph, quiver, one-point extension}. Using (\ref{eq:cartan matrix of one-point extension}) and (\ref{eq:cartan matrix of kQ}), we compute the Cartan matrix of $\Gamma$ to be
\begin{equation}\label{eq:cartan matrix of one-point extension of bipartite quiver}
C_{\Gamma} = 
\begin{bmatrix}
1 & d_x^T & d_y^T \\ 
0_r & I_r & R \\
0_s & 0_s & I_s
\end{bmatrix}.
\end{equation}
Note that in particular we have $\det(C_{\K Q})=1$ and $\det(C_{\Gamma})=1$. Then we have
\begin{align*}
    \CP_{\Gamma}(t) &= \det(tI -(-C_{\Gamma}^T C_{\Gamma}^{-1})) \\
    &= \det(tC_{\Gamma}+C_{\Gamma}^T)\det(C_{\Gamma}^{-1}) \\
    &=\det(tC_{\Gamma}+C_{\Gamma}^T),
\end{align*}
and similarly for $\CP_{\K Q}(t)$. This shows part (a). That $\Gamma$ is $2$-levelled $0$-light follows from Lemma \ref{lem:quiver of one-point extension} and using the assumption that $\topu(M)$ is supported only in level $1$. Then $\Gamma$ is Koszul by Example \ref{ex:2-levelled is Koszul}. This shows part (b). Parts (c), (d), (e) and (f) follow by Lemma \ref{lem:bi-eigenvectors and nonnegative integer matrices}.
\end{proof}

\subsection{The Coxeter polynomial of \texorpdfstring{$\Gamma$}{G}}

In this section we continue working under Setup \ref{setup:graph, quiver, one-point extension} and we compute the Coxeter polynomial $\CP_{\Gamma}(t)$ of $\Gamma$. We need the following standard result about determinants of block matrices for our computations.

\begin{prop}\cite[Propositions 2.8.3 and 2.8.4]{MatrixMath}\label{prop:determinant of block matrix}
Consider the block matrix
$\begin{bmatrix}
A & B\\
C & D
\end{bmatrix}$.
\begin{enumerate}[(a)]
    \item If $A$ is invertible, then 
$
\det\begin{bmatrix}
A & B\\
C & D
\end{bmatrix} 
= 
\det(A) \det(D - CA^{-1}B)$.
\item If $D$ is invertible, then
$
\det\begin{bmatrix}
A & B\\
C & D
\end{bmatrix} 
= 
\det(D) \det(A - CD^{-1}B)$.
\end{enumerate}
\end{prop}

In the sequel, we set
\begin{equation}\label{eq:the w polynomial}
    w(t) \coloneqq (t+1)^2-t\Sigma_1\Sigma_2 = t^2 - (\Sigma_1\Sigma_2-2)t+1
\end{equation}

We have the following trivial observation.

\begin{lem}\label{lem:roots of w(t)}
Let 
\[
f_{\pm}(t) = \frac{(t^2-2)\pm t\sqrt{t^2-4}}{2}.
\]
Then the roots of $w(t)$ are $f_{+}(\sqrt{\Sigma_1\Sigma_2})$ and $f_{-}(\sqrt{\Sigma_1\Sigma_2})$. In particular, $-1$ is not a root of $w(t)$.
\end{lem}

\begin{proof}
That these are the roots follows by a direct computation. Also, evaluating $w(t)$ at $-1$ we obtain $w(-1)=\Sigma_1\Sigma_2$ which is nonzero by Proposition \ref{prop:main properties of setup}(c).
\end{proof}

\begin{lem}\label{lem:w(t) is a factor of characteristic polynomial of kQ}
The polynomial $w(t)$ is a factor of $\CP_{\K Q}(t)$.
\end{lem}

\begin{proof}
Using Proposition \ref{prop:main properties of setup}(a) we have
\begin{align*}
    \CP_{\K Q}(t) =\det(tC_{\K Q}+C_{\K Q}^T) 
    =\det\begin{bmatrix}
    (t+1)I_r & tR \\
    R^T & (t+1)I_s
    \end{bmatrix}.
\end{align*}
For $t\neq -1$ and using Proposition \ref{prop:determinant of block matrix} we obtain
\begin{align*}
    \CP_{\K Q}(t) 
    & = \det((t+1)I_r) \det((t+1)I_s - R^T((t+1)I_r)^{-1} tR)\\
    & = (t+1)^r \det((t+1)I_s - (t+1)^{-1}tR^T R)\\
    & = (t+1)^r \det((t+1)^{-1}((t+1)^2I_s-tR^T R))\\
    & = (t+1)^{r-s} \det((t+1)^2 I_s - tR^T R)
\end{align*}
Now notice that $R^T R d_y = \Sigma_1 \Sigma_2 d_y$ implies
\[
((t+1)^2I_s - tR^TR)d_y = w(t)  d_y.
\]
By Proposition \ref{prop:main properties of setup}(c) we have that $d_y\neq 0$. Hence if $\lambda$ is a root of $w(t)$, we obtain that the matrix $(\lambda+1)^2 I_s-\lambda R^T R$ has $w(\lambda)=0$ as an eigenvalue, and so
\[
\det((\lambda+1)^2 I_s - \lambda R^TR)=0.
\]
It follows that all roots of $w(t)$ are roots of $\det((t+1)^2 I_s - t R^TR)$. Since by Lemma \ref{lem:roots of w(t)} we have that $-1$ is not a root of $w(t)$, and since $w(t)$ is a monic polynomial we conclude that $w(t)$ is a factor of $\CP_{\K Q}(t)$.
\end{proof}

\begin{lem}\label{lem:middle computation}
If $z$ is not a root of $\CP_{\K Q}(t)$, then we have
\begin{align*}
    w(z)
    \begin{bmatrix}
    d_x^T & d_y^T
    \end{bmatrix}
    \begin{bmatrix}
    (z + 1)I_r & zR\\
     R^T & (z+1)I_s
    \end{bmatrix}^{-1}
    \begin{bmatrix}
    d_x\\
    d_y
    \end{bmatrix}
    & = 
    \lvert d_x \rvert^2 (z+1 - z\Sigma_1) + \lvert d_y \rvert^2 (z+1 - \Sigma_2).
\end{align*}
\end{lem}

\begin{proof}
If $z$ is not a root of $\CP_{\K Q}(t)$, then 
\[
0\neq \CP_{\K Q}(z) = \det\begin{bmatrix}
    (z+1)I_r & zR \\
    R^T & (z+1)I_s
    \end{bmatrix}
\]
and so the matrix
\[
\begin{bmatrix}
    (z + 1)I_r & zR\\
     R^T & (z+1)I_s
    \end{bmatrix}
\]
is invertible. First, using $R d_y=\Sigma_1 d_x$ and $R^{T}d_x=\Sigma_2 d_y$, we compute 
\begin{align}\label{eq:middle computation first step}
    \begin{bmatrix}
    (z + 1)I_r & zR\\
     R^T & (z+1)I_s
    \end{bmatrix}
    \begin{bmatrix}
    (z+1 - z\Sigma_1) d_x\\
    (z+1 - \Sigma_2) d_y
    \end{bmatrix}
    &=
    w(z)
    \begin{bmatrix}
    d_x \\
    d_y
    \end{bmatrix}.
\end{align}
By multiplying both sides of (\ref{eq:middle computation first step}) by 
\[
\begin{bmatrix}
    d_x^T & d_y^T
    \end{bmatrix}
\begin{bmatrix}
    (z + 1)I_r & zR\\
     R^T & (z+1)I_s
    \end{bmatrix}^{-1}
\]
we obtain
\begin{align*}
    w(z) \begin{bmatrix}
    d_x^T & d_y^T
    \end{bmatrix}
    \begin{bmatrix}
    (z + 1)I_r & zR\\
     R^T & (z+1)I_s
    \end{bmatrix}^{-1}
    \begin{bmatrix}
    d_x \\
    d_y
    \end{bmatrix}
    &=
    \begin{bmatrix}
    d_x^T & d_y^T
    \end{bmatrix}
    \begin{bmatrix}
    (z+1 - z\Sigma_1) d_x\\
    (z+1 - \Sigma_2) d_y
    \end{bmatrix}\\
    &=\lvert d_x \rvert^2 (z+1 - z\Sigma_1) + \lvert d_y \rvert^2 (z+1 - \Sigma_2),
\end{align*}
as claimed.
\end{proof}

\begin{lem}\label{lem:the characteristic polynomial of Gamma}
We have
\[
\CP_{\Gamma}(t) = \frac{\CP_{\K Q}(t)}{w(t)}p(t),
\]
where 
\[
p(t) = (t + 1)(t^2 - (\Sigma_1\Sigma_2 + \lvert d_x \rvert^2 + \lvert d_y \rvert^2 - \lvert d_x \rvert^2\Sigma_1 - 2)t + 1).
\]
\end{lem}

\begin{proof}
Using Proposition \ref{prop:main properties of setup}(a), we have
\begin{align*}
    \CP_{\Gamma}(t) &= \det (tC_{\Lambda} + C_{\Lambda}^T) = 
    \det
    \begin{bmatrix}
    (t + 1) & t d_x^T & t d_y^T\\
    d_x & (t + 1)I_r & tR\\
    d_y & R^T & (t+1)I_s
    \end{bmatrix}.
\end{align*}
Assume that $z$ is not a root of $\CP_{\K Q}(t)$. Then the matrix
\[
\begin{bmatrix}
    (z + 1)I_r & zR\\
     R^T & (z+1)I_s
    \end{bmatrix}
\]
is invertible and so we may apply Proposition \ref{prop:determinant of block matrix}(b) to continue our computation
\begin{align*}
    \CP_{\Gamma}(z) & = \det\begin{bmatrix}
    (z + 1) & z d_x^T & z d_y^T\\
    d_x & (z + 1)I_r & zR\\
    d_y & R^T & (z+1)I_s
    \end{bmatrix}\\
    &=
    \det
    \begin{bmatrix}
    (z + 1)I_r & zR\\
    R^T & (z+1)I_s
    \end{bmatrix}
    \left((z+1) - 
    z
    \begin{bmatrix}
    d_x \\ d_y
    \end{bmatrix}
    \begin{bmatrix}
    (z + 1)I_r & zR\\
     R^T & (z+1)I_s
    \end{bmatrix}^{-1}
    \begin{bmatrix}
    d_x^T &
    d_y^T
    \end{bmatrix}\right)\\
    & = 
    \CP_{\K Q}(z)
    \left((z+1) - 
    z
    \begin{bmatrix}
    d_x^T & d_y^T
    \end{bmatrix}
    \begin{bmatrix}
    (z + 1)I_r & zR\\
     R^T & (z+1)I_s
    \end{bmatrix}^{-1}
    \begin{bmatrix}
    d_x\\
    d_y
    \end{bmatrix}\right).\\
\end{align*}
Assume that $z$ is not a root of $w(t)$ either. Using Lemma \ref{lem:middle computation}, we obtain
\begin{align*}
    w(z)\CP_{\Gamma}(z) &= \CP_{\K Q}(z)\left((z+1)w(z) - 
    zw(z)
    \begin{bmatrix}
    d_x^T & d_y^T
    \end{bmatrix}
    \begin{bmatrix}
    (z + 1)I_r & zR\\
     R^T & (z+1)I_s
    \end{bmatrix}^{-1}
    \begin{bmatrix}
    d_x\\
    d_y
    \end{bmatrix}\right) \\
    &= \CP_{\K Q}(z)\left((z+1)w(z) - 
    z\left(\lvert d_x \rvert^2 (z+1 - z\Sigma_1) + \lvert d_y \rvert^2 (z+1 - \Sigma_2)\right)\right).
\end{align*}
Set
\[
r(t) = (t+1)w(t) - 
    t\left(\lvert d_x \rvert^2 (t+1 - t\Sigma_1) + \lvert d_y \rvert^2 (t+1 - \Sigma_2)\right).
\]
We claim that $r(t)=p(t)$. Indeed, by expanding $r(t)$ we obtain
\[
r(t) = t^3 - (\Sigma_1\Sigma_2 + \lvert d_x \rvert^2 + \lvert d_y \rvert^2 - \Sigma_1\lvert d_x \rvert^2 - 3)t^2 - (\Sigma_1\Sigma_2 + \lvert d_x \rvert^2 + \lvert d_y \rvert^2 - \Sigma_2\lvert d_y \rvert^2 - 3)t + 1.
\]
Using $\Sigma_1\lvert d_x \rvert^2=\Sigma_2\lvert d_y \rvert^2$ which holds by Proposition \ref{prop:main properties of setup}(e), we obtain
\[
r(t) = (t + 1)(t^2 - (\Sigma_1\Sigma_2 + \lvert d_x \rvert^2 + \lvert d_y \rvert^2 - \lvert d_x \rvert^2\Sigma_1 - 2)t + 1) = p(t),
\]
as claimed. Hence we have shown that for all but finitely many $z\in\CC$ (namely the roots of $w(t)$ and $\CP_{\K Q}(t)$) we have 
\[
w(z)\CP_{\Gamma}(z) = \CP_{\K Q}(z) p(z).
\]
Since $w(t)$, $\CP_{\Gamma}(t)$ and $p(t)$ are all polynomials, we obtain an equality of polynomials
\[
w(t)\CP_{\Gamma}(t)=\CP_{\K Q}(t)p(t).
\]
Since by Lemma \ref{lem:w(t) is a factor of characteristic polynomial of kQ} we have that $w(t)$ is a factor of $\CP_{\K Q}(t)$, the claim follows by dividing both sides by $w(t)$.
\end{proof}

We end the section with the following result, giving a correspondence between the eigenvalues of the Coxeter matrix of $\K Q$ and the eigenvalues of the adjacency matrix of the associated bipartite graph $A = A(\graph{Q})$.

\begin{prop}\label{prop: eigenvalue correspondence}
Let $\CP_A(t)$ be the characteristic polynomial of $A=A(\graph{Q})$ and let $\CP_{\K Q}(t)$ be the Coxeter polynomial of $\K Q$. Then the roots of $\CP_A(t)$ come in pairs of the form $\{\lambda,\lambda^{-1}\}$, the roots of $\CP_{\K Q}(t)$ come in pairs of the form $\{z,z^{-1}\}$, and furthermore there is a correspondence between these pairs of roots. The correspondence is given by
\[
\{\lambda,-\lambda\} \mapsto \{z=f_{+}(\lambda), z^{-1}=f_{-}(\lambda)\},
\]
and
\[
\{z, z^{-1}\} \mapsto \{\lambda = \sqrt{z} + \frac{1}{\sqrt{z}},-\lambda\}, 
\]
where $f_{\pm}(t)$ is as in Lemma \ref{lem:roots of w(t)}. 
\end{prop}

This can be found stated in the beginning of Section 3 of \cite{McKS05}. 
For the reader's convenience, we have included a proof.

\begin{proof}
First of all it is straightforward to compute that the maps defined in the statement are inverse to each other. Also, since $\overline{Q}$ is a bipartite graph, the roots of $\CP_{A}(t)$ are symmetric around $0$.

Now, using Proposition \ref{prop:determinant of block matrix}(a) we compute
\begin{align*}
    \CP_{A}(\lambda) &= \det 
\begin{bmatrix}
\lambda I_r & -R \\
-R^T & \lambda I_s
\end{bmatrix} \\
&= \det(\lambda I_r)\det(\lambda I_s-(-R^T)(\lambda I_r)^{-1}(-R)) \\
&= \lambda^{r}\det(\lambda I_s - \lambda^{-1}R^T R).
\end{align*}

Making the substitution $\lambda = \sqrt{z} + \frac{1}{\sqrt{z}}$, we obtain
\begin{equation}\label{eq:q(sqrt(t)+1/sqrt(t)}
    \CP_{A}\left(\sqrt{z}+\frac{1}{\sqrt{z}}\right)= (z+1)^r z^{-\frac{r}{2}}\det((z+1)z^{-\frac{1}{2}}I_s - (z+1)^{-1}z^{\frac{1}{2}}R^TR).
\end{equation}
On the other hand, using Proposition \ref{prop:main properties of setup}(a) we have
\begin{align*}
    \CP_{\K Q}(z) =\det(zC_{\K Q}+C_{\K Q}^T) =\det\begin{bmatrix}
    (z+1)I_r & zR \\
    R^T & (z+1)I_s
    \end{bmatrix}.
\end{align*}
Using Proposition \ref{prop:determinant of block matrix}(a) we obtain
\begin{align*}
    \CP_{\K Q}(z) 
    & = \det((z+1)I_r) \det((z+1)I_s - R^T((z+1)I_r)^{-1} zR)\\
    & = (z+1)^r \det((z+1)I_s - (z+1)^{-1}zR^T R)\\
    & = (z+1)^r \det(z^{\frac{1}{2}}((z+1)z^{-\frac{1}{2}}I_s-(z+1)^{-1}z^{\frac{1}{2}}R^T R))\\
    & = (z+1)^{r} z^{\frac{s}{2}}\det((z+1)z^{-\frac{1}{2}}I_s - (z+1)^{-1}z^{\frac{1}{2}}R^T R)\\
    & \overset{(\ref{eq:q(sqrt(t)+1/sqrt(t)})}{=} (z+1)^{r} z^{\frac{s}{2}}\frac{\CP_{A}\left(\sqrt{z}+\frac{1}{\sqrt{z}}\right)}{(z+1)^r z^{-\frac{r}{2}}}  \\
    &=z^{\frac{r+s}{2}}\CP_{A}\left(\sqrt{z}+\frac{1}{\sqrt{z}}\right).
\end{align*}
Hence, we see that roots of $\CP_{\K Q}(t)$ indeed come in pairs $\{z,z^{-1}\}$ and that every pair of roots $\{z, z^{-1}\}$ of $\CP_{\K Q}(t)$ corresponds to a pair of roots $\{\lambda, -\lambda\}$ of $\CP_A(t)$ as claimed. 
\end{proof}
\subsection{One-point extensions and higher homological algebra}

In this section we continue working under Setup \ref{setup:graph, quiver, one-point extension}. Our aim is to use the Coxeter polynomial of $\Gamma$ to obtain necessary conditions that have to be satisfied if $\Gamma$ has certain properties coming from higher homological algebra. 
First we obtain a condition on the graph $\graph{Q}$.

\begin{prop}\label{prop:if Gamma is twisted fractionally Calabi-Yau then graph is reflexive}
Assume that $\Gamma$ is twisted fractionally Calabi--Yau. Then $\graph{Q}$ is a reflexive graph. Furthermore, $\graph{Q}$ is a Salem graph if and only if $\sqrt{\Sigma_1\Sigma_2}> 2$.
\end{prop}

\begin{proof}
Let
\begin{equation}\label{eq:characteristic polynomial of adjacency matrix}
\CP_A(t) = \det 
\begin{bmatrix}
tI_r & -R \\
-R^T & tI_s
\end{bmatrix}
\end{equation}
be the characteristic polynomial of $A=A(\graph{Q})$. Recall that by Proposition \ref{prop:main properties of setup}(f) we have that $\sqrt{\Sigma_1\Sigma_2}$ is the largest eigenvalue of $A$. Hence it is enough to show that all other eigenvalues belong to $[-2,2]$. Since eigenvalues of $A$ are symmetric around $0$, it is enough to show that if $\lambda>0$ is a root of $\CP_A(t)$ with $\lambda\neq \sqrt{\Sigma_1\Sigma_2}$, then $\lambda\leq 2$.

Assume towards a contradiction that there exists a root $\lambda$ of $\CP_A(t)$ with $\lambda > 2$ and $\lambda\neq\sqrt{\Sigma_1\Sigma_2}$, and recall the correspondence between the roots of $\CP_A(t)$ and $\CP_{\K Q}(t)$ described in Proposition \ref{prop: eigenvalue correspondence}.
In particular, $z=f_{+}(\lambda)$ where $f_{+}(t)$ is as in Lemma \ref{lem:roots of w(t)}, is a root of $\CP_{\K Q}(t)$. Since $\lambda\neq\sqrt{\Sigma_1\Sigma_2}$, Lemma \ref{lem:roots of w(t)} implies that $z$ is not a root of $w(t)$. We conclude by Lemma \ref{lem:the characteristic polynomial of Gamma} that $z$ is a root of $\CP_{\Gamma}(t)$. Since $\Gamma$ is twisted fractionally Calabi--Yau, we conclude that $\abs{z}=1$ by Proposition \ref{prop:fractionally Calabi--Yau implies eigenvalues of Coxeter matrix are in unit circle}. But then
\[
1 = \abs{z} = \abs{f_{+}(\lambda)}= \abs*{\frac{\left(\lambda^2-2\right)+\lambda\sqrt{\lambda^2-4}}{2}} > \frac{(4-2)+2\sqrt{4-4}}{2}=1, 
\]
is a contradiction.
\end{proof}

Next we obtain a condition on the data $(\lvert d_x \rvert, \lvert d_y \rvert, \Sigma_1, \Sigma_2)$. We start with the following easy lemma.

\begin{lem}\label{lem:monic polynomial of degree 2 with roots on unit circle}
Let $s(t)=t^2+at+b$ be a polynomial such that $a,b\in\QQ$ and all roots of $s(t)$ are on the unit circle. Then $s(t)\in\{t^2+1,t^2\pm t+1, t^2\pm 2t+1\}$.
\end{lem}

\begin{proof}
Let $z$ and $\overline{z}$ be the two roots of $s(t)$ in $\CC$. Since by assumption we have $\lvert z \rvert=1$ it follows that $z=\cos\theta + i \sin\theta$ where $0\leq \theta < 2\pi$. Then
\[
t^2 + at + b = s(t) = (t-z)(t-\overline{z}) = t^2-(z+\overline{z})t+z\overline{z} = t^2-2\cos\theta t+1.
\]
This shows that $2\cos\theta\in\QQ$ and that $b=1$. Hence $\cos\theta\in\QQ$ and so, by Niven's theorem \cite{Niven56}, we obtain that $\cos\theta\in\{0,\pm\tfrac{1}{2},\pm 1\}$. The claim follows.
\end{proof}

\begin{prop}\label{prop:properties when one-point extension is fractionally Calabi-Yau}
Assume that $\Gamma$ is twisted fractionally Calabi--Yau. Then the following hold.
\begin{enumerate}[(a)]
    \item $\Sigma_1,\Sigma_2\in \QQ$, $\Sigma_1\Sigma_2\in \ZZ$ and $\Sigma_1\Sigma_2>0$. 
    \item $d_x\neq 0$ and $d_y\neq 0$. In particular, $d_x$ is an eigenvector of $RR^T$ with eigenvalue $\Sigma_1\Sigma_2$ and $d_y$ is an eigenvector of $R^TR$ with eigenvalue $\Sigma_1\Sigma_2$.
    \item $\Sigma_1\lvert d_x\rvert^2 = \Sigma_2 \lvert d_y \rvert^2$.
    \item $\Sigma_1\Sigma_2 + \lvert d_x \rvert^2 + \lvert d_y \rvert^2 -\lvert d_x \rvert^2 \Sigma_1\in\{0,1,2,3,4\}$.
\end{enumerate}
\end{prop}

\begin{proof}
Parts (a), (b) and (c) are just parts (c), (d) and (e) of Proposition \ref{prop:main properties of setup}. It remains to show part (d). Notice that by Lemma \ref{lem:the characteristic polynomial of Gamma} we have
\begin{equation}\label{eq:char pol lambda}
w(t)\CP_{\Gamma}(t) =\CP_{\K Q}(t)(t+1)q(t),
\end{equation}
where
\[
q(t)\coloneqq t^2 - (\Sigma_1\Sigma_2 + \lvert d_x \rvert^2 + \lvert d_y \rvert^2 - \lvert d_x \rvert^2\Sigma_1 - 2)t + 1.
\]
Since by Lemma \ref{lem:w(t) is a factor of characteristic polynomial of kQ} we have that $w(t)$ divides $\CP_{\K Q}(t)$, we conclude that the roots of $q(t)$ are roots of $\CP_{\Gamma}(t)$. Since $\Gamma$ is twisted fractionally Calabi--Yau, it follows that the roots of $\CP_{\Gamma}(t)$ lie on the unit circle from Proposition \ref{prop:fractionally Calabi--Yau implies eigenvalues of Coxeter matrix are in unit circle}. Hence the roots of $q(t)$ lie on the unit circle as well. By part (a) we have that $q(t)$ is a rational polynomial. Then by Lemma \ref{lem:monic polynomial of degree 2 with roots on unit circle} we obtain that
\[
q(t)\in\{t^2+1,t^2\pm t+1, t^2\pm 2t+1\}.
\]
and the claim follows.
\end{proof}

\begin{cor}\label{cor:properties when one-point extension is 2-RF} 
Assume that $\Gamma$ is $2$-representation-finite. Then the statements (a)--(d) of Proposition \ref{prop:properties when one-point extension is fractionally Calabi-Yau} hold.
\end{cor}

\begin{proof}
By Proposition \ref{prop:main properties of setup}(b) we have that $\Gamma$ is a $2$-levelled $0$-light algebra. Since $\Gamma$ is also $2$-representation-finite, by Proposition \ref{prop:2-levelled light and 2-RF is twisted fractionally Calabi--Yau}(b) we have that $\Gamma$ is twisted fractionally Calabi--Yau, which is the requirement of Proposition \ref{prop:properties when one-point extension is fractionally Calabi-Yau}.
\end{proof}

\subsection{The order of the negative of the Coxeter matrix}

We finish this section with a small computation that will be useful later. As before we work under Setup \ref{setup:graph, quiver, one-point extension}. Then the Cartan matrix of $\Gamma$ is of the form
\[
C_{\Gamma} = 
\begin{bmatrix}
1 & d_x^T & d_y^T \\ 
0_{r,1} & I_r & R \\
0_{s,1} & 0_{s,r} & I_s
\end{bmatrix},
\]
where $R$ is the adjacency matrix of the underlying graph of the quiver $Q$ and where $(d_x^T, d_y^T)$ is the dimension vector of $M$. In the sequel, whenever we have an identity matrix $I_k$, we simply write $I$; the dimension $k$ is always clear from context. Our aim is to compute the order of the matrix $-\Phi_{\Gamma}$ under certain assumptions. 

Let us first set $C_+ := C_{\Gamma} - I$. Since $R$ is bipartite, we have that $C_+^3 = (C_{\Gamma} - I)^3 = 0$, and we thus have that 
\[
C_{\Gamma}^{-1} = C_+^{2} - C_{+} + I = \begin{bmatrix}
    0 & 0_{1,r} & d_x R^T \\ 0_{r,1} & 0_{r,r} & 0_{r,s} \\ 0_{s,1} & 0_{s,r} & 0_{s,s}   
\end{bmatrix} -
    \begin{bmatrix}
0 & d_x^T & d_y^T \\ 
0_{r,1} & 0_{r,r} & R \\
0_{s,1} & 0_{s,r} & 0_{s,s}
\end{bmatrix} + I.
\]
Recall that the Coxeter matrix of $\Gamma$ is given by $\Phi_{\Gamma} = -C_{\Gamma}^{T}C_{\Gamma}^{-1}$.
Hence, using that $R d_y=\Sigma_1 d_x$ and $R^{T} d_y=\Sigma_2 d_y$, we can compute 
\begin{align*}
C_{\Gamma}^{T}C_{\Gamma}^{-1}
 &= C_{\Gamma}^{T} C_{+}^2 -C_{\Gamma}^{T} C_{+} + C_{\Gamma} \\
 &=
\begin{bmatrix}
    1 & -d_x^T & (\Sigma_2 - 1)d_y^T \\ 
    d_x & -d_x d_x^T + I & (\Sigma_2 - 1)d_x d_y^T - R \\
    d_y & -d_y d_x^T + R^T & (\Sigma_2 - 1)d_yd_y^T - R^T R + I
    \end{bmatrix}. 
\end{align*}

From this, we get 
\[
C_{\Gamma}^{T}C_{\Gamma}^{-1} 
\begin{bmatrix}
a \\
bd_x\\
cd_y
\end{bmatrix}
= 
\begin{bmatrix}
a - b\lvert d_x \rvert^2 + c(\Sigma_2 -1)\lvert d_y\rvert^2 \\
d_x(a + b(1-\lvert d_x\rvert^2) + c((\Sigma_2 -1)\lvert d_y \rvert^2-\Sigma_1)\\
d_y(a + b(\Sigma_2 - \lvert d_x \rvert^2) + c((\Sigma_2 - 1)\lvert d_y \rvert^2 - \Sigma_1 \Sigma_2 + 1))
\end{bmatrix}.
\]
Let now 
\[
\begin{bmatrix}
f_1 \\
f_2 \\
f_3
\end{bmatrix}
:=
\begin{bmatrix}
(\Sigma_2 -1)\lvert d_y\rvert^2 \\
(\Sigma_2 -1)\lvert d_y \rvert^2 - \Sigma_1 \\
(\Sigma_2 - 1)\lvert d_y \rvert^2 - \Sigma_1 \Sigma_2 + 1
\end{bmatrix},
\]
so that
\begin{equation}\label{eq:order of -Phi_Gamma 1}
(C_{\Gamma}^{T}C_{\Gamma}^{-1})^{m}
\begin{bmatrix}
1 \\
d_x\\
d_y
\end{bmatrix}=
\begin{bmatrix}
1 \\
d_x\\
d_y
\end{bmatrix}
\end{equation}
holds if and only if 
\begin{equation}\label{eq:order of -Phi_Gamma 2}
\begin{bmatrix}
1 & -\lvert d_x \rvert^2 & f_1 \\
1 & 1 - \lvert d_x \rvert^2 & f_2 \\
1 & \Sigma_2 - \lvert d_x \rvert^2 & f_3 \\
\end{bmatrix}^{m}
\begin{bmatrix}
1 \\
1\\
1
\end{bmatrix}=
\begin{bmatrix}
1 \\
1\\
1
\end{bmatrix}
\end{equation}
holds. With this in mind we have the following.

\begin{prop}\label{prop: p and the order of the Coxeter matrix when bi-eigenvector} 
Assume that $\Sigma_1 \lvert d_x \rvert^2=\Sigma_2 \lvert d_y\rvert^2$ and that $\Sigma_1\Sigma_2 + \lvert d_x \rvert^2 + \lvert d_y \rvert^2 -\lvert d_x \rvert^2 \Sigma_1=p$ for some $p\in\{0,1,2,3,4\}$. Then the following hold:
\begin{enumerate}[(a)]
    \item If $p \neq 4$, then the negative of the Coxeter matrix $\Phi_{\Gamma}$ is of order described in the following table:
    \[
    \begin{tabular}{c|c}
         $p$ & Order of $-\Phi_\Gamma$  \\
         $0$ & $\infty$\\
         $1$ & $6$ \\
         $2$ & $4$ \\
         $3$ & $3$ \\
     \end{tabular}
     \]
    \item If $p = 4$, then $(r, \Sigma_1, s, \Sigma_2) = (4,2,4,2)$.
\end{enumerate}
\end{prop}

\begin{proof}
Using the equivalence of (\ref{eq:order of -Phi_Gamma 1}) and (\ref{eq:order of -Phi_Gamma 2}), it is enough to show that
\[
\begin{bmatrix}
1 & -\lvert d_x \rvert^2 & f_1 \\
1 & 1 - \lvert d_x \rvert^2 & f_2 \\
1 & \Sigma_2 - \lvert d_x \rvert^2 & f_3 \\
\end{bmatrix}^{m}
\begin{bmatrix}
1 \\
1\\
1
\end{bmatrix} = \begin{bmatrix}
1 \\
1\\
1
\end{bmatrix}
\]
for $m > 0$ never happens if $p = 0$, happens first for $m = 6$ for $p = 1$, for $m = 4$ for $p = 2$, for $m = 3$ for $p = 3$, and only ever happens if $(\lvert d_x\rvert^2, \Sigma_1, \lvert d_y\rvert^2, \Sigma_2) = (4,2,4,2)$ for $p = 4$, in which case it happens first for $m = 4$. This can be showed using $\Sigma_1 \lvert d_x \rvert^2=\Sigma_2 \lvert d_y\rvert^2$ and that $\Sigma_1\Sigma_2 + \lvert d_x \rvert^2 + \lvert d_y \rvert^2 -\lvert d_x \rvert^2 \Sigma_1=p$ and by noting that
\[
\begin{bmatrix}
1 & -\lvert d_x \rvert^2 & f_1 \\
1 & 1 - \lvert d_x \rvert^2 & f_2 \\
1 & \Sigma_2 - \lvert d_x \rvert^2 & f_3 \\
\end{bmatrix}
\begin{bmatrix}
0 \\
1\\
1
\end{bmatrix}
=
\begin{bmatrix}
0 \\
1\\
1
\end{bmatrix}
+
\begin{bmatrix}
\Sigma_1\Sigma_2\\
(\Sigma_2 - 1)\Sigma_1\\
\Sigma_2
\end{bmatrix}
-
\begin{bmatrix}
p \\
p\\
p
\end{bmatrix},
\]
that
\[
\begin{bmatrix}
1 & -\lvert d_x \rvert^2 & f_1 \\
1 & 1 - \lvert d_x \rvert^2 & f_2 \\
1 & \Sigma_2 - \lvert d_x \rvert^2 & f_3 \\
\end{bmatrix}
\begin{bmatrix}
1 \\
1\\
1
\end{bmatrix}
=
\begin{bmatrix}
0 \\
1\\
1
\end{bmatrix}
+
\begin{bmatrix}
1 \\
1\\
1
\end{bmatrix}
+
\begin{bmatrix}
\Sigma_1\Sigma_2\\
(\Sigma_2 - 1)\Sigma_1\\
\Sigma_2
\end{bmatrix}
-
\begin{bmatrix}
p \\
p\\
p
\end{bmatrix},
\]
and that
\[
\begin{bmatrix}
1 & -\lvert d_x \rvert^2 & f_1 \\
1 & 1 - \lvert d_x \rvert^2 & f_2 \\
1 & \Sigma_2 - \lvert d_x \rvert^2 & f_3 \\
\end{bmatrix}
\begin{bmatrix}
\Sigma_1\Sigma_2\\
(\Sigma_2 - 1)\Sigma_1\\
\Sigma_2
\end{bmatrix} 
=
\begin{bmatrix}
\Sigma_1\Sigma_2\\
(\Sigma_2 - 1)\Sigma_1\\
\Sigma_2
\end{bmatrix}.
\]
Indeed, if $p = 0$ and $M$ is the matrix 
\[
\begin{bmatrix}
1 & -\lvert d_x \rvert^2 & f_1 \\
1 & 1 - \lvert d_x \rvert^2 & f_2 \\
1 & \Sigma_2 - \lvert d_x \rvert^2 & f_3 \\
\end{bmatrix},
\]
an easy induction gives that 
\[
M^m \begin{bmatrix}
1 \\
1\\
1
\end{bmatrix} 
=
m\cdot
\begin{bmatrix}
0 \\
1\\
1
\end{bmatrix}
+ 
\begin{bmatrix}
1 \\
1\\
1
\end{bmatrix}
+ 
\frac{1}{2}m(m+1)\cdot 
\begin{bmatrix}
\Sigma_1\Sigma_2\\
(\Sigma_2 - 1)\Sigma_1\\
\Sigma_2
\end{bmatrix}.
\]
The rest of the cases are similar.
\end{proof}

The additional conditions in Proposition \ref{prop: p and the order of the Coxeter matrix when bi-eigenvector} are of course not arbitrary: they are always satisfied when $\Gamma$ is twisted fractionally Calabi--Yau by Proposition \ref{prop:properties when one-point extension is fractionally Calabi-Yau}. Hence we obtain the following.

\begin{cor}\label{cor:proposition about order of matrix holds for Gamma twisted fractionally Calabi-Yau}
Assume that $\Gamma$ is twisted fractionally Calabi--Yau. Then the assumptions and results of Proposition \ref{prop: p and the order of the Coxeter matrix when bi-eigenvector} hold. 
\end{cor}

\section{2-representation-finite quadratic monomial algebras}
\label{Section:2-representation-finite quadratic monomial algebras}
Our aim in this section is to classify certain quadratic monomial $2$-hereditary algebras. This is achieved by relating star algebras to one-point extensions of the form described in Setup \ref{setup:graph, quiver, one-point extension}.

\subsection{Trivial extensions of star algebras}

We start by introducing some notation about star algebras. Let $S_{(r,s)}$ be an $(r,s)$-star quiver as in (\ref{eq:star quiver}) and let $\Lambda=\K S_{(r,s)}/\I$ be an $(r,s)$-star algebra. In particular $\Lambda$ is $2$-levelled, and if $r,s\geq 2$ then $\Lambda$ is also $1$-light. For $i\in \{1,\ldots,r\}$ we define the set $\mathcal{Z}(x_i)=\{b_j \mid a_i b_j\in \I\}$ and for $j\in \{1,\ldots,s\}$ we define the set $\mathcal{Z}(y_j)=\{a_i \mid a_i b_j\in \I\}$. Note that the relations generating $\I$ are necessarily monomial and can be described by a subset of $\{1,\ldots,i\}\times \{1,\ldots,j\}$. We encode this information in a quiver $B_{\Lambda}$ as follows:
\begin{itemize}
    \item The set of vertices of $B_{\Lambda}$ is the set $\{x_1,\ldots,x_r,y_1,\ldots,y_s\}$.
    \item The arrows of $B_{\Lambda}$ are of the form $\beta_{ji}:y_j\to x_i$ whenever $a_i b_j\not\in \I$.
\end{itemize}
Notice that the underlying graph of $B_{\Lambda}$ is bipartite with partition $\{x_1,\ldots,x_r\}\cup\{y_1,\ldots,y_s\}$.

\begin{defin}\label{def:balanced algebra}
    We say that a star algebra $\Lambda=\K S_{(r,s)}/\I$ is \emph{balanced} if every arrow in $S_{(r,s)}$ appears in at least one nonzero path of length $2$ in $\Lambda$ and in at least one zero path of length $2$ in $\Lambda$.
\end{defin}

Note that a star algebra $\Lambda=\K S_{(r,s)}/\I$ is balanced if and only if $1\leq \mathcal{Z}(x_i)\leq s-1$ for all $i\in\{1,\ldots,r\}$ and $1\leq \mathcal{Z}(y_j)\leq r-1$ for all $j\in\{1,\ldots,r\}$.

Recall that the \emph{trivial extension} of a finite-dimensional algebra $A$ is the symmetric algebra $T(A)$ given by $A \oplus D(A)$ as an $A$-bimodule and endowed with the multiplication induced by letting $D(A)$ square to zero, i.e.\ by setting $(a, f)\cdot (a', g) := (aa', a g + f a')$ for $(a, f), (a', g) \in T(\Lambda)$. In the case of balanced star algebras, we can compute the quiver with relations of the trivial extension of an $(r,s)$-star algebra using the following proposition.

\begin{prop}\label{prop: quiver and relations for trivial extension of star quiver algebra}
Let $\Lambda=\K S_{(r,s)}/\I$ be a balanced $(r,s)$-star algebra where $\I=(\rho)$. Then $T(\Lambda)=Q_{T(\Lambda)}/(\rho_{T(\Lambda)})$ where 
\begin{enumerate}
    \item[(i)] $Q_{T(\Lambda)}$ can be obtained by adjoining to the quiver $S_{(r,s)}$ the arrows of the quiver $B_{\Lambda}$, i.e.\ by adjoining an arrow $\beta_{ji} \colon y_j \rightarrow x_i$ for every non-zero path $a_i b_j$ in $\Lambda$, and
    \item[(ii)] $\rho_{T(\Lambda)}$ can be obtained by adjoining to $\rho$ the sets of relations given by
    \[
    \{ \beta_{ji} a_i - \beta_{ji'} a_{i'} \, \lvert \, a_{i} b_{j}, a_{i'}b_j \not \in \rho \, \, \text{for} \, \, j \, \, \text{such that} \, \,  1 \leq j \leq s \},
    \]
    and
    \[
    \{ b_{j} \beta_{ji} - b_{j'}\beta_{j'i} \, \lvert \, a_{i} b_{j}, a_{i}b_{j'} \not \in \rho \, \, \text{for} \, \, i \, \, \text{such that} \, \,  1 \leq i \leq r \},
    \]
    and the set of all paths of length greater than $3$. 
\end{enumerate}
\end{prop}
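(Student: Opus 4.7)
My approach follows the standard three-step argument for describing the Gabriel quiver with relations of a trivial extension.

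For part (i), I would compute the Gabriel quiver of $T(\Lambda)$ via the radical filtration. Since $T(\Lambda) = \Lambda \oplus D\Lambda$ with $D\Lambda \cdot D\Lambda = 0$ in $T(\Lambda)$, one has $\rad T(\Lambda) = \rad\Lambda \oplus D\Lambda$ and $\rad^2 T(\Lambda) = \rad^2\Lambda + \rad\Lambda \cdot D\Lambda + D\Lambda \cdot \rad\Lambda$, so $\rad T(\Lambda)/\rad^2 T(\Lambda)$ splits as a direct sum. The $\Lambda$-part recovers the arrows of $S_{(r,s)}$, while the $D\Lambda$-part is $D\Lambda/(\rad\Lambda \cdot D\Lambda + D\Lambda \cdot \rad\Lambda)$. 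Using the bimodule formulas $(x\cdot f)(z) = f(zx)$ and $(f\cdot y)(z) = f(yz)$ on $D\Lambda$, I would check that the class of $p^*$ (for a nonzero path $p$) is nonzero in this quotient precisely when $p$ cannot be extended to a longer nonzero path by an arrow on either side. The balanced hypothesis ensures every arrow $a_i$ and every arrow $b_j$ extends to some nonzero length-$2$ path, so the maximal nonzero paths in $\Lambda$ are exactly the nonzero products $a_i b_j$; their duals produce the arrows $\beta_{ji}\colon y_j \to x_i$.

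For part (ii), I verify each listed relation directly in $T(\Lambda)$. The original relations $\rho$ remain relations since $\Lambda$ embeds into $T(\Lambda)$. For the first commutativity family, the bimodule formula yields $\beta_{ji}\cdot a_i = (a_i b_j)^*\cdot a_i = b_j^*$, which is independent of $i$; hence $\beta_{ji} a_i = \beta_{ji'} a_{i'}$ whenever $a_i b_j, a_{i'} b_j \notin \rho$. The symmetric computation gives $b_j \cdot \beta_{ji} = a_i^*$, independent of $j$, establishing the second family. For the vanishing of paths of length $>3$, a case analysis on the starting vertex and on the number of $\beta$-arrows in the path shows that any such path in $Q_{T(\Lambda)}$ either contains two $\beta$-arrows, in which case the product vanishes by $D\Lambda\cdot D\Lambda = 0$, or is of one of the forms $a_i b_j \beta_{jk} a_k$ and $b_j \beta_{jk} a_k b_l$ (together with a length-$5$ extension), which vanish by direct bimodule computation: the inner factor reduces to $b_j^*$ or $e_z^*$, and multiplying by the outer factor produces zero because of a length mismatch with the surviving basis elements of $D\Lambda$.

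The main obstacle is the third step: showing that the listed relations generate the full ideal, equivalently that the natural surjection $\K Q_{T(\Lambda)}/(\rho_{T(\Lambda)}) \twoheadrightarrow T(\Lambda)$ is an isomorphism. I would establish this by a dimension count, using $\dim T(\Lambda) = 2\dim\Lambda = 2(2r+2s+1+N)$, where $N$ is the number of nonzero paths $a_i b_j$. On the quotient-algebra side, I would enumerate path representatives at each length up to $3$: the commutativity relations identify the length-$2$ paths $\{\beta_{ji} a_i\}$ into $s$ classes (one per $j$) and $\{b_j \beta_{ji}\}$ into $r$ classes (one per $i$), while the interplay of both commutativities with the original relations $\rho$ forces the length-$3$ paths $a_i b_j \beta_{jk}$ with $i\ne k$ and $\beta_{ji} a_i b_l$ with $j\ne l$ to collapse to zero, and cascades the remaining length-$3$ loops $b_j \beta_{jk} a_k$ at $z$ into a single class. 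The balanced hypothesis enters crucially to ensure that every commutativity chain is non-vacuous, and matching the counts on both sides then yields the desired isomorphism.
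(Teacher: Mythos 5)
Your route is genuinely different from the paper's: the paper proves this proposition by citing the general presentation of trivial extensions due to Fern\'andez and Platzeck \cite{FP02}, whereas you compute everything directly. Your treatment of part (i) via $\rad T(\Lambda)/\rad^2 T(\Lambda)$ and the identification of the new arrows with duals of the maximal nonzero paths $a_ib_j$ is correct, as are your verifications that the two commutativity families hold (via $\beta_{ji}\cdot a_i=b_j^*$ and $b_j\cdot\beta_{ji}=a_i^*$) and that all paths of length $>3$ vanish.

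The gap is in your final step. You assert that the listed relations force the length-$3$ paths $a_ib_j\beta_{jk}$ with $i\neq k$ (and dually $\beta_{ji}a_ib_l$ with $j\neq l$) to vanish, and the cycles at $z$ to collapse to a single class; neither follows from balancedness alone. Writing $N(x_i)=\{y_j\mid a_ib_j\notin\I\}$, the degree-$3$ component of $(\rho_{T(\Lambda)})$ from $x_i$ to $x_k$ is spanned by the differences $a_i(b_j\beta_{jk}-b_{j'}\beta_{j'k})$ together with the monomials $(a_ib_j)\beta_{jk}$ for $y_j\in N(x_k)\setminus N(x_i)$, so the path $a_ib_j\beta_{jk}$ dies only when $N(x_k)\not\subseteq N(x_i)$. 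Concretely, take $r=s=3$ and $\I=(a_1b_3,a_2b_3,a_3b_1,a_3b_2)$: this algebra is balanced, $N(x_1)=N(x_2)=\{y_1,y_2\}$, and the only degree-$3$ element of the ideal from $x_1$ to $x_2$ is $a_1b_1\beta_{12}-a_1b_2\beta_{22}$, so $a_1b_1\beta_{12}\neq 0$ in $\K Q_{T(\Lambda)}/(\rho_{T(\Lambda)})$ even though $a_1b_1\cdot(a_2b_1)^*=0$ in $T(\Lambda)$. Moreover $\graph{B_{\Lambda}}$ is disconnected here, so the cycles $b_j\beta_{jk}a_k$ fall into two classes rather than one. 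Your dimension count then gives $41$ for the quotient of the path algebra against $\dim T(\Lambda)=2\cdot 18=36$, and since each $\beta_{ji}$ must be sent to a nonzero scalar multiple of $(a_ib_j)^*$ (the relevant $\rad^2$-components vanish, so there is no freedom in the presentation map), the mismatch is decisive. The argument therefore cannot close as outlined: one needs either extra monomial relations or extra hypotheses on $B_{\Lambda}$ (no containments among neighbourhoods, connectedness) that do hold in the semi-regular $2$-representation-finite situations used later in the paper but are not consequences of balancedness. This is precisely the subtlety that the paper's appeal to the general theorem of \cite{FP02} is meant to absorb, so if you want a self-contained proof you must either add these hypotheses or adjoin the missing monomials to $\rho_{T(\Lambda)}$.
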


\begin{proof}
The computation of the quiver and relations of the trivial extension of an algebra can be found in \cite{FP02}; see also the related works \cite{Schroer99} and \cite{FSTTV22}.
\end{proof}

In other words, if $\Lambda=\K S_{(r,s)}/\I$ is a balanced $(r,s)$-star algebra, then the quiver $Q_{T(\Lambda)}$ of $T(\Lambda)$ has the form
\begin{equation}\label{eq:quiver of T(Lambda)}
\begin{tikzpicture}[scale=1, transform shape, baseline={(current bounding box.center)}]]
    \node (v1) at (-0.5,1.75) {$x_1$};
    \node (vr) at (-0.5,0.25) {$x_r$};
    \node (u1) at (2.5,1.75) {$y_1$};
    \node (us) at (2.5,0.25) {$y_s$};            
    \node (w1) at (4,1.75) {$x_1$};
    \node (wr) at (4,0.25) {$x_r$,};          
    \node (z) at (1,1) {$z$};
    \node (BL) at (3.25,-0.25) {$B_{\Lambda}$};

    \draw[loosely dotted] (-0.5,0.8) to (-0.5,1.2);  \draw[loosely dotted]  (2.5,0.8) to (2.5,1.2);
    \draw[loosely dotted]  (4,0.8) to (4,1.2);
    \draw[loosely dotted]  (3,1.75) to (3.4,1.75);
    \draw[loosely dotted]  (3,0.25) to (3.4,0.25);
    \draw[loosely dotted]  (3,1) to (3.4,1);
        
    \draw[->] (v1) to node[above] {$a_1$}  (z);
    \draw[->] (vr) to node[below] {$a_r$}  (z);
    \draw[->] (z) to node[above] {$b_1$}  (u1);
    \draw[->] (z) to node[below] {$b_s$}  (us);

    \tikzset{
	contour/.style={
		double distance=3mm,
		line cap=round,
		rounded corners=1pt,thick
        }
    }
    \draw[rounded corners=6pt] (2.25,2)-- (4.25,2)--(4.25,0)--(2.25,0)--cycle;
\end{tikzpicture}
\end{equation}
where we identify the two copies of the vertices $x_i$ for $i\in\{1,\ldots,r\}$ and the boxed area is a copy of the quiver $B_{\Lambda}$.

In certain cases we may obtain two algebras with isomorphic trivial extensions as per the following result.

\begin{thm}\cite[Theorem A]{FernandezPlatzeck06}\label{thrm:isomorphic trivial extensions}
Let $A$ be a finite-dimensional algebra such that all cycles in the quiver of $A$ are zero in $A$, let $\J$ be an ideal of $T(A)$ generated by exactly one arrow from each non-zero cycle in $T(A)$, and let $A' = T(A)/\J$. Then $T(A) \simeq T(A')$.
\end{thm}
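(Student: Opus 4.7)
The plan is to work throughout with a quiver-and-relations description of trivial extensions in the style of Proposition~\ref{prop: quiver and relations for trivial extension of star quiver algebra}, generalized to any finite-dimensional algebra whose quiver has only zero cycles. First, I would describe $Q_{T(A)}$ explicitly: under the assumption that every cycle in the quiver of $A$ vanishes in $A$, the quiver $Q_{T(A)}$ is obtained from $Q_A$ by adjoining one new ``socle arrow'' $\beta\colon j \to i$ for each element of a basis of the appropriate quotient of $e_i (DA) e_j$, equivalently one arrow for each minimal non-zero path in $A$ from $i$ to $j$. In particular, every non-zero cycle in $Q_{T(A)}$ must traverse at least one socle arrow, and conversely every such cycle arises from concatenating $A$-paths with socle arrows.

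Next, I would analyze $A' = T(A)/\J$. Its quiver $Q_{A'}$ is $Q_{T(A)}$ with the generating arrows of $\J$ deleted. The first key step is to show that every cycle in $Q_{A'}$ is zero in $A'$: if a cycle $p$ were nonzero in $A'$ then it would also be a nonzero cycle in $T(A)$, so by the hypothesis defining $\J$ one of its arrows would lie in $\J$, contradicting that this arrow has been removed. Hence the quiver-and-relations description can be applied to $A'$ as well, and $Q_{T(A')}$ is obtained from $Q_{A'}$ by adjoining new socle arrows indexed by a basis of the appropriate quotient of $DA'$.

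The heart of the argument is to exhibit a natural bijection between these new arrows of $T(A')$ and the deleted arrows of $\J$: each deleted $\alpha \in \J$ lies on a non-zero cycle of $T(A)$ which, after removing $\alpha$, contributes precisely one new basis element to $\soc(A')$, giving rise to exactly one extension arrow in $T(A')$ positioned as $\alpha$ was. Using this bijection I would define a $\K$-algebra homomorphism $\Phi\colon T(A') \to T(A)$ sending each vertex and each surviving arrow to itself and each new extension arrow to the corresponding element of $\J$. Well-definedness amounts to verifying that the commutativity and length-three-vanishing relations of $T(A')$ (in the spirit of Proposition~\ref{prop: quiver and relations for trivial extension of star quiver algebra}) hold in $T(A)$, which follows from the symmetric structure of $T(A)$. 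A dimension count using $\dim_\K T(B) = 2\dim_\K B$ then yields bijectivity, once one checks that $\dim_\K A' = \dim_\K A$.

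The main obstacle will be making the bijection in the third paragraph fully rigorous: one must verify carefully that cutting a single $\alpha \in \J$ produces exactly one new socle element of $A'$ (no more and no fewer, which relies on the fact that $\J$ is generated by \emph{exactly one} arrow from each non-zero cycle) and that the new extension arrow of $T(A')$ it produces is attached in precisely the right position and direction, so that $\Phi$ is well-defined, respects all the relations, and provides a genuine $\K$-algebra isomorphism $T(A') \xrightarrow{\sim} T(A)$.
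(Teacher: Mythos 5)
First, a point of comparison: the paper does not prove this statement at all --- it is imported verbatim as Theorem A of \cite{FernandezPlatzeck06}, so there is no internal argument to measure your proposal against; I can only judge it on its own merits and against the strategy of the cited source. Your overall plan (use a quiver-with-relations presentation of $T(A)$, check that all cycles of $A'=T(A)/\J$ are again zero, match the new ``socle arrows'' of $T(A')$ with the arrows cut out by $\J$, and finish with a dimension count) is indeed the shape of the Fern\'andez--Platzeck argument, and your steps 1 and 2 are fine. But the proposal has a genuine gap exactly where you flag one, and it is not a technicality: the claim that cutting a single $\alpha\in\J$ produces \emph{exactly one} new element of the bimodule socle of $A'$, sitting between the right pair of vertices, is essentially the whole theorem. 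An arrow of $\J$ typically lies on many non-zero cycles of $T(A)$; the new arrows of $T(A')$ are indexed by a basis of the bimodule top of $D(A')$ (dually, of $\soc_{(A')^{e}}(A')$), and nothing said so far shows this space matches the span of the $\J$-arrows bijectively, nor that the relations among the new arrows of $T(A')$ are carried to valid identities in $T(A)$ by your map $\Phi$. Two smaller issues: the new arrows of $T(A)$ correspond to \emph{maximal} non-zero paths of $A$ (a basis dual to the bimodule socle), not minimal ones; and the concluding dimension count requires $\dim_{\K}\J=\dim_{\K}A$, which is likewise not free.

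If you want to close the gap, the cleanest route avoids the arrow-by-arrow bijection entirely and uses the symmetric structure of $T(A)$. Grade $T(A)$ by declaring the chosen arrows of $\J$ to be of degree $1$ and all other arrows of degree $0$ (one must check the defining relations are homogeneous, which follows from the presentation of $T(A)$ together with the hypothesis on $\J$). Since $T(A)$ is symmetric, every non-zero path extends to a non-zero cycle, and every non-zero cycle has total degree exactly $1$ by the choice of $\J$; hence $T(A)=T(A)_0\oplus T(A)_1$, the ideal $\J$ equals $T(A)_1$, and $A'\simeq T(A)_0$. The symmetrizing form vanishes on $T(A)_0\times T(A)_0$ and on $T(A)_1\times T(A)_1$ (either would produce a non-zero cycle of degree $0$ or $2$), so it induces a nondegenerate pairing between $T(A)_0$ and $T(A)_1$, giving $T(A)_1\simeq D(T(A)_0)$ as $T(A)_0$-bimodules with $T(A)_1^2=0$. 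This exhibits $T(A)\simeq T(A)_0\ltimes D(T(A)_0)=T(A')$, which is the desired isomorphism and, as a byproduct, yields your bijection between cut arrows and new socle arrows rather than requiring it as an input.
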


We can apply Theorem \ref{thrm:isomorphic trivial extensions} in the case of a balanced star algebra as in the following lemma.

\begin{lem}\label{lem:trivial extension without arrows is setup 1}
Let $\Lambda=\K S_{(r,s)}/\I$ be a balanced $(r,s)$-star algebra. Let $\J$ be the ideal of $T(\Lambda)$ generated by $\{a_i\mid 1\leq i\leq r\}$ and define the algebra
\begin{equation}
\Gamma_{\Lambda}= T(\Lambda)/\J.
\end{equation}
Then the following hold.
\begin{enumerate}
    \item[(a)] $\Gamma_{\Lambda}=\K B_{\Lambda}[M]$ where $M$ is the unique indecomposable $B_{\Lambda}$-module with dimension vector $(1,\ldots,1)^{T}$. 
    \item[(b)] $T(\Lambda)\isom T(\Gamma_{\Lambda})$.
\end{enumerate}
\end{lem}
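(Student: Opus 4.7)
The plan is to prove (a) by directly matching bound quiver presentations of $\Gamma_{\Lambda}$ and $\K B_{\Lambda}[M]$, and to prove (b) by invoking Theorem \ref{thrm:isomorphic trivial extensions}.

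For (a) I would first describe $\Gamma_{\Lambda}=T(\Lambda)/\J$ using the presentation of $T(\Lambda)$ supplied by Proposition \ref{prop: quiver and relations for trivial extension of star quiver algebra} together with (\ref{eq:quiver of T(Lambda)}). Quotienting by $\J=\langle a_1,\ldots,a_r\rangle$ deletes the arrows $a_i$ from $Q_{T(\Lambda)}$, leaving a quiver with the arrows $b_j\colon z\to y_j$ together with the arrows $\beta_{ji}$ forming $B_{\Lambda}$. Every original relation in $\rho$ has the form $a_ib_j$ and so lies in $\J$; every relation $\beta_{ji}a_i-\beta_{ji'}a_{i'}$ lies in $\J$ as well; and every path of length $\geq 3$ in $Q_{T(\Lambda)}$ necessarily traverses some $a_i$, so the length-$(\geq 3)$ relations become trivial after the quotient. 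Consequently the only relations surviving in $\Gamma_{\Lambda}$ are those of the form $b_j\beta_{ji}-b_{j'}\beta_{j'i}$ with $a_ib_j,a_ib_{j'}\notin\I$.

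Next I would compute the presentation of $\K B_{\Lambda}[M]$ via Lemma \ref{lem:quiver of one-point extension}. Since $\Lambda$ is balanced, every $x_i$ has at least one incoming arrow in $B_{\Lambda}$ and every $y_j$ has at least one outgoing arrow in $B_{\Lambda}$. Taking $M$ to be the representation with $Me_v\isom\K$ at every vertex and each $\beta_{ji}$ acting as the identity, one sees that $(\topu M)e_{y_j}\isom\K$ while $(\topu M)e_{x_i}=0$. Thus Lemma \ref{lem:quiver of one-point extension} produces precisely one new arrow $\omega\to y_j$ for each $j$, and after identifying $\omega$ with $z$ these match the arrows $b_j$ from above. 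The relations of $\K B_{\Lambda}[M]$ encode the module structure: for each $x_i$, every composite $\omega\to y_j\to x_i$ evaluates to $1\in Me_{x_i}$ independently of $j$, producing the relations $b_j\beta_{ji}=b_{j'}\beta_{j'i}$. This matches the presentation of $\Gamma_{\Lambda}$ obtained above, proving (a).

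For (b) I would apply Theorem \ref{thrm:isomorphic trivial extensions} with $A=\Lambda$. The quiver $S_{(r,s)}$ is acyclic, so the hypothesis that all cycles in the quiver of $\Lambda$ vanish is trivially verified. The nonzero cycles in $T(\Lambda)$ are, by Proposition \ref{prop: quiver and relations for trivial extension of star quiver algebra}, cyclic rotations of the length-$3$ paths $a_ib_j\beta_{ji}$ with $a_ib_j\notin\I$; the balanced assumption guarantees that each $a_i$ lies in at least one such cycle, and each such cycle contains exactly one arrow from $\{a_1,\ldots,a_r\}$. Hence $\J$ is generated by exactly one arrow chosen from each nonzero cycle of $T(\Lambda)$, and Theorem \ref{thrm:isomorphic trivial extensions} yields $T(\Lambda)\isom T(\Gamma_{\Lambda})$. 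The main obstacle will be the bookkeeping in (a): carefully tracking which relations of Proposition \ref{prop: quiver and relations for trivial extension of star quiver algebra} survive in the quotient while simultaneously verifying that the one-point-extension relations in $\K B_{\Lambda}[M]$ reduce to the same form. A related subtlety is the choice of $M$, which is unique up to isomorphism only when $\graph{B_{\Lambda}}$ is a tree; in general one must fix the gauge (all structure maps equal to $1$) so that $\K B_{\Lambda}[M]$ coincides on the nose with $\Gamma_{\Lambda}$.
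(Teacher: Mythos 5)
Your proposal is correct and follows essentially the same route as the paper: part (a) by matching the bound-quiver presentation of $T(\Lambda)/\J$ (arrows $a_i$ deleted, only the commutativity relations $b_j\beta_{ji}-b_{j'}\beta_{j'i}$ surviving) against the one-point extension $\K B_{\Lambda}[M]$, and part (b) by the identical application of Theorem \ref{thrm:isomorphic trivial extensions}, using balancedness to see that every nonzero cycle $a_ib_j\beta_{ji}$ contains exactly one $a_i$ and every $a_i$ lies in such a cycle. Your explicit computation of $\topu M$ and your caveat about fixing the gauge on $M$ (all structure maps equal to $1$) fill in details the paper compresses into ``a direct computation,'' and are welcome.
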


\begin{proof}
\begin{enumerate}
    \item[(a)] By definition, the quiver $Q_{\Gamma_{\Lambda}}$ is the quiver of $T(\Lambda)$ with the arrows $a_1,\ldots,a_r$ removed. In other words, by (\ref{eq:quiver of T(Lambda)}) we conclude that $Q_{\Gamma_{\Lambda}}$ has the form
    \begin{equation*}
    \begin{tikzpicture}[scale=1, transform shape, baseline={(current bounding box.center)}]]
       
        \node (u1) at (2.5,1.75) {$y_1$};
        \node (us) at (2.5,0.25) {$y_s$};            
        \node (w1) at (4,1.75) {$x_1$};
        \node (wr) at (4,0.25) {$x_r$.};          
        \node (z) at (1,1) {$z$};
        \node (BL) at (3.25,-0.25) {$B_{\Lambda}$};
        
        \draw[loosely dotted]  (2.5,0.8) to (2.5,1.2);
        \draw[loosely dotted]  (4,0.8) to (4,1.2);
        \draw[loosely dotted]  (3,1.75) to (3.4,1.75);
        \draw[loosely dotted]  (3,0.25) to (3.4,0.25);
        \draw[loosely dotted]  (3,1) to (3.4,1);
        
        \draw[->] (z) to node[above] {$b_1$}  (u1);
        \draw[->] (z) to node[below] {$b_s$}  (us);

        \tikzset{
            contour/.style={
                double distance=3mm,
			line cap=round,
			rounded corners=1pt,thick
            }
        }

        \draw[rounded corners=6pt] (2.25,2)-- (4.25,2)--(4.25,0)--(2.25,0)--cycle;
    \end{tikzpicture}
    \end{equation*}
    Moreover, by Proposition \ref{prop: quiver and relations for trivial extension of star quiver algebra} we see that the only relations in  $\Gamma_{\Lambda}$ are commutativity relations. More specifically, any two distinct paths in $Q_{T(\Lambda)}$ starting at the vertex $z$ and terminating at a vertex $x_i$ for some $i\in\{1,\ldots,r\}$ are of the form $\beta_{ji}b_j, \beta_{j'i}b_{j'}$, and we have $\beta_{ji}b_j-\beta_{j'i}b_{j'}\in \rho_{T(\Lambda)}$. It follows that $\beta_{ji}b_j-\beta_{j'i}b_{j'}\in\rho_{\Gamma_{\Lambda}}$ as well. Then $\Gamma_{\Lambda}=\K B_{\Lambda}[M]$ follows by a direct computation.

    \item[(b)] In view of Theorem \ref{thrm:isomorphic trivial extensions} we need to show that the set $\{a_i\mid 1\leq i\leq r\}$ consists of exactly one arrow from each non-zero cycle in $T(\Lambda)$. Since all paths of length greater than $3$ in $T(\Lambda)$ are zero, the only nonzero cycles in $\Lambda$ are of the form $a_i b_j \beta_{ji}$ with $a_i b_j\not\in \I$. In particular, every nonzero cycle in $\Lambda$ has exactly one arrow $a_i$. Since $\Lambda$ is balanced, each $a_i$ belongs to at least one cycle, and the claim follows. \qedhere
\end{enumerate}    
\end{proof}

We will need a way of relating the spectrum of the Coxeter matrices of $\Lambda$ and $\Gamma_{\Lambda}$. 
It turns out that the algebras $\Lambda$ and $\Gamma = \Gamma_{\Lambda}$ are derived equivalent, and this is sufficient as derived equivalent algebras have the same Coxeter polynomials; see e.g.\ \cite[Lemma 4.1]{Hap09}. 

To show that $\Lambda$ and $\Gamma_{\Lambda}$ are derived equivalent, we use a tilting complex that is Koszul dual to a $2$-APR tilting complex in the sense of \cite[Definition 3.14]{IO11}. 
Nevertheless, it takes less work to simply show that it is a tilting complex from scratch than to use Koszul duality and the results in \cite{IO11}.
Recall that we let $\epsilon_i = \Sigma_{\level(j) = i} e_{j}$ for $e_j$ a primitive idempotent for $\Gamma$.

\begin{prop}\label{prop: Gamma and Lambda have the same Coxeter polynomial}
Let $\Lambda=\K S_{(r,s)}/\I$ be a balanced $(r,s)$-star algebra. Let $X := \nu(\epsilon_0 \Gamma_0) \oplus \epsilon_2 \Gamma_0 \oplus \epsilon_1 \Gamma_0 [1]$ be a complex in $\D^b(\modu \Gamma)$. Then the following hold.
\begin{enumerate}
    \item[(a)] The Koszul dual $\Lambda^!$ is the bound quiver algebra with quiver
\[
\begin{tikzpicture}[scale=1, transform shape, baseline={(current bounding box.center)}]]
    \node (v1) at (-0.5,1.75) {$y_1$};
    \node (vr) at (-0.5,0.25) {$y_s$};
    \node (u1) at (2.5,1.75) {$x_1$};
    \node (us) at (2.5,0.25) {$x_r$.};      
    \node (z) at (1,1) {$z$};

    \draw[loosely dotted] (-0.5,0.8) to (-0.5,1.2);  \draw[loosely dotted]  (2.5,0.8) to (2.5,1.2);
        
    \draw[->] (v1) to node[above] {$b_1^{\ast}$}  (z);
    \draw[->] (vr) to node[below] {$b_s^{\ast}$}  (z);
    \draw[->] (z) to node[above] {$a_1^{\ast}$}  (u1);
    \draw[->] (z) to node[below] {$a_r^{\ast}$}  (us);
\end{tikzpicture}
\]
and relations given by all paths $b_j^{\ast}a_i^{\ast}$ such that $a_i b_j\not\in I$. In particular, $B_{\Lambda^!}$ is the bipartite complement of $B_{\Lambda}$.
    \item[(b)] $X$ is a tilting complex; 
    \item[(c)] $\End_{\D^b(\modu \Gamma)} (X) \simeq \Lambda^{!}$; and
    \item[(d)] $\Gamma = \Gamma_{\Lambda}$ is derived equivalent to $\Lambda$.
\end{enumerate}
\end{prop}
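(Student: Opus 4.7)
The plan is to address the four parts in order, with (a) being a direct Koszul-duality computation, (b)--(c) forming the technical core, and (d) following via Rickard's theorem combined with Proposition~\ref{prop:Koszul and n-levelled}(b).

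For part (a), I would use that $\Lambda$ is Koszul by Example~\ref{ex:2-levelled is Koszul} (as $\Lambda$ is $2$-levelled), so $\Lambda^! = \K Q^{\op}/R^{\perp}$ where $R$ is spanned by the quadratic monomials $\{a_i b_j \mid a_i b_j \in \I\}$ and $R^{\perp}$ is the orthogonal complement under the natural pairing between length-$2$ paths in $Q$ and in $Q^{\op}$. Since $R$ is monomial, $R^{\perp}$ is spanned by the complementary monomials $\{b_j^{*} a_i^{*} \mid a_i b_j \notin \I\}$, yielding the quiver $S_{(r,s)}^{\op}$ with the claimed relations. The statement on $B_{\Lambda^!}$ then follows immediately: an arrow $y_j \to x_i$ exists in $B_{\Lambda^!}$ if and only if $b_j^{*} a_i^{*}$ is not a relation in $\Lambda^!$, that is $a_i b_j \in \I$, that is there is no arrow $y_j \to x_i$ in $B_\Lambda$.

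For parts (b) and (c), I would set $T_1 := \nu(\epsilon_0 \Gamma_0) = \bigoplus_i I(x_i)$, $T_2 := \epsilon_2 \Gamma_0 = S_z$, and $T_3 := \epsilon_1 \Gamma_0[1] = \bigoplus_j S_{y_j}[1]$, and compute all Hom spaces $\Hom_{\D^b(\modu \Gamma)}(T_a, T_b[k])$ between pairs of summands. Useful tools are: $\nu$ as a triangulated auto-equivalence of $\D^b(\modu \Gamma)$ (available since $\Gamma$ has finite global dimension as a $2$-levelled algebra) to reduce Homs involving $\nu(\epsilon_0 \Gamma_0)$; Serre duality $\Hom(A, \nu(B)) \simeq D\Hom(B, A)$ to handle Homs into injectives; and explicit descriptions of $I(x_i)$ as a representation of $\Gamma$ together with short injective resolutions of the simples $S_z, S_{y_j}$. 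The key outputs are $\Hom(X, X[k]) = 0$ for $k \neq 0$ (yielding the tilting self-orthogonality property), the only nonzero cross Homs in degree $0$ being $\Hom(T_{1, i}, T_2) \simeq \K$ (given by the surjection $I(x_i) \twoheadrightarrow S_z$ onto the top) and $\Hom(T_2, T_{3, j}) \simeq \K$ (represented by the extension class of the arrow $b_j \colon z\to y_j$ in $\Gamma$), and the composition $\Hom(T_2, T_{3, j}) \circ \Hom(T_{1, i}, T_2)$ vanishing in $\Ext^1(I(x_i), S_{y_j})$ precisely when $a_i b_j \notin \I$ (since in that case the arrow $b_j$ appears inside $I(x_i)$, providing a lift of the surjection onto $S_z$ that kills the extension class). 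Identifying $T_{1,i} \leftrightarrow x_i$, $T_2 \leftrightarrow z$, $T_{3,j} \leftrightarrow y_j$, and recalling that arrows of the quiver of $\End(X)$ correspond to irreducible morphisms between summands in the reverse direction, this matches the quiver with relations of $\Lambda^!$ as computed in (a). For generation, I would use that $\widetilde{\Gamma_0}$ generates $\D^b(\modu \Gamma)$ by Proposition~\ref{prop:Koszul and n-levelled}(b), and show $\Thick(X) = \Thick(\widetilde{\Gamma_0})$ using the short exact sequence $0 \to S_{x_i} \to I(x_i) \to I(x_i)/S_{x_i} \to 0$, whose cokernel is filtered by $S_z$ and the $S_{y_j}$ and hence lies in $\Thick(T_2, T_3)$.

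For part (d), once (b) and (c) are established, Rickard's derived Morita theorem yields $\D^b(\modu \Gamma) \simeq \D^b(\modu \Lambda^{!})$, which combined with the derived equivalence $\D^b(\modu \Lambda) \simeq \D^b(\modu \Lambda^{!})$ from Proposition~\ref{prop:Koszul and n-levelled}(b) gives the conclusion. The main obstacle is the verification in (b)--(c) that all higher Ext groups between summands of $X$ vanish and that the explicit isomorphism $\End(X) \simeq \Lambda^!$ correctly matches both the arrows and the monomial relations of $\Lambda^!$; this amounts to a careful analysis of composites of morphisms involving $I(x_i)$, $S_z$, and $S_{y_j}$ inside $\D^b(\modu \Gamma)$ and tracking the effect of the commutativity relations of $\Gamma$ on these composites.
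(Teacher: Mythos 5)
Your proposal is correct and follows essentially the same route as the paper: establish (a) via the quadratic dual of a monomial quadratic algebra, verify directly that $X$ is a rigid generator with endomorphism algebra $\Lambda^!$ by computing Hom spaces between the injectives $I(x_i)$, the simple $S_z$, and the shifted simples $S_{y_j}[1]$, and conclude (d) by combining Rickard's theorem with the Koszul duality equivalence of Proposition \ref{prop:Koszul and n-levelled}(b). The only cosmetic differences are that the paper computes the relevant Ext groups from the minimal projective resolution of $D(\Gamma e_x)$ and pins down the relations by a dimension count of $\Hom(D(\Gamma e_x), e_y\Gamma_0[1])$, whereas you use Serre duality and verify directly which composites $T_{1,i}\to T_2\to T_{3,j}$ vanish; both computations yield the same conclusion.
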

\begin{proof}
\begin{enumerate}
    \item[(a)] As we have mentioned, in Section \ref{subsec:Koszul algebras}, the quiver of $\Lambda^{!}$ is given by the opposite quiver of $S_{(r,s)}$. This shows the claim about the quiver of $\Lambda!$. 
    The description of the relations follows by the description of the quadratic dual of $\Lambda$ as a quotient of a path algebra of a quiver with relations; see e.g.\ \cite[Definition 2.8.1]{Beilinson-Ginzburg-Soergel}.
    In particular, since there are only (quadratic) monomial relations, $\Lambda^!$ is also quadratic monomial, and its relations are exactly those quadratic monomials that are not relations for $\Lambda$.
    \item[(b)] Since $\epsilon_0 \Gamma_0$ is projective, we know $\nu(\epsilon_0 \Gamma_0)$ is injective and thus to check that it is rigid, it suffices to check that 
\[
\Hom_{\D^b (\modu \Gamma^)}(\nu(\epsilon_0 \Gamma_0), (\epsilon_2 \Gamma_0 \oplus \epsilon_1 \Gamma_0 [1])[i \neq 0]) = 0
\]
and that
\[
\Hom_{\D^b (\modu \Gamma^)}(\epsilon_2 \Gamma_0 \oplus \epsilon_1 \Gamma_0 [1], (\epsilon_2 \Gamma_0 \oplus \epsilon_1 \Gamma_0 [1])[i \neq 0]) = 0.
\]
The latter of these follows by the fact that $\Gamma$ is $2$-levelled Koszul, see also the proof of Proposition \ref{prop:Koszul and n-levelled}(b). For the former we consider the minimal projective resolution of an indecomposable summand of $\nu(\epsilon_0 \Gamma_0)$.
Such a summand must be of the form $\nu(e_x \Gamma) \simeq D(\Gamma e_x)$, hence we see that it must have a minimal projective resolution of the form 
\[
P^{-2} \rightarrow P^{-1} \rightarrow e_z \Gamma, 
\]
with $P^{-1}$ a direct sum including a copy of $e_y \Gamma$ for each vertex $y$ such that there is no arrow from $y$ to $x$ in the quiver of $\Gamma$. 
In particular, note that it does not contain any summand of the form $\epsilon_2 \Gamma = e_z \Gamma$.
Moreover, any summand of $P^{-2}$ is of the form $e_x \Gamma$ for $x$ some vertex in level $0$. 
Consequently, by using that $\Ext^{i}_{\Gamma}(M, S) \simeq \Hom_{\Gamma}(P^{-i}(M), S)$ whenever $P^{\bullet}(M)$ is minimal projective resolution of $M$ and $S$ is a simple module, it is clear that 
\[
\Hom_{\D^b (\modu \Gamma^)}(\nu(\epsilon_0 \Gamma_0), (\epsilon_2 \Gamma_0 \oplus \epsilon_1 \Gamma_0 [1])[i \neq 0]) = 0.
\]
Hence, the complex $X$ is rigid. 
Since $\Gamma$ is finite-dimensional and of finite global dimension, we can use the radical filtration of $\nu(\epsilon_0 \Gamma)$ to deduce that the thick subcategory generated by $X$ equals $\Htpycat^b(\proj \Gamma)$, which is itself equivalent to $\D^b(\modu \Gamma)$ since $\Gamma$ is of finite global dimension. 
Consequently, we can conclude that $X$ is a tilting complex.  
\item[(c)] Note that
\[
\Hom_{\D^b (\modu \Gamma^)}(D(\Gamma e_x), e_z \Gamma_0) \simeq k \simeq \Hom_{\D^b (\modu \Gamma^)}(e_z \Gamma_0, e_y \Gamma_0 [1])
\]
for any vertex $x$ in level $0$ and any vertex $y$ in level $1$. 
Hence, the quiver of $\End_{\D^b (\modu \Gamma^)} (X)$ is the same as that of $\Lambda^!$. 

Since the quiver is the same as that of $\Lambda^!$, any relation must be a quadratic monomial, i.e.\ a path of length $2$.
Hence, to determine the relations, it suffices to note that, by the form of the minimal projective resolution of $D(\Gamma e_x)$, we have
\[
\Hom_{\D^b (\modu \Gamma^)}(D(\Gamma e_x), e_y \Gamma_0 [1]) \simeq k
\]
if there is no arrow in the quiver of $\Gamma$ from a vertex $x$ in level $0$ to a vertex $y$ in level $1$, and
\[
\Hom_{\D^b (\modu \Gamma^)}(D(\Gamma e_x), e_y \Gamma_0 [1]) = 0
\]
otherwise. 
\item[(d)] Finally we note that parts (b) and (c) give that $\Gamma_{\Lambda}$ is derived equivalent to $\Lambda^{!}$. Since $\Lambda$ is derived equivalent to $\Lambda^{!}$ by Proposition \ref{prop:Koszul and n-levelled}(b) (as $\Lambda$ is $2$-levelled and Koszul), it follows that $\Gamma_{\Lambda}$ and $\Lambda$ are also derived equivalent.  \qedhere
\end{enumerate}
\end{proof}

\subsection{Star algebras and higher homological algebra}

The motivation for studying balanced $(r,s)$-star algebras comes from the following result.

\begin{prop}\label{prop:star algebras which are 2-hereditary}
Let $A=\K Q/\I$ be a quadratic monomial algebra.
\begin{enumerate}
    \item[(a)] \cite[Corollary 4.21]{ST24} Assume that $\Ext^{1}_{A}(D A, A) = 0$. Then $A$ is an $(r,s)$-star algebra. In particular, every basic $2$-hereditary quadratic monomial algebras is an $(r,s)$-star algebra.
    \item[(b)] \cite[Lemma 4.16 and Lemma 4.24]{ST24} Assume that $A$ is $2$-hereditary. Then one of the following holds.
    \begin{enumerate}
        \item[(i)] $(r,s)=(1,1)$, $\I=(a_1 b_1)$ and $\Lambda$ is $2$-representation-finite, or
        \item[(ii)] $r,s\geq 4$, for every $i\in\{1,\ldots,r\}$ we have $2\leq \abs{\mathcal{Z}(x_i)}\leq s-2$, and for every $j\in\{1,\ldots,s\}$ we have $2\leq \abs{\mathcal{Z}(y_j)}\leq r-2$. 
    \end{enumerate}
\end{enumerate}
\end{prop}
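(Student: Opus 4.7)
The plan is to observe that both parts are attributed to \cite{ST24}, so the proof formally consists of invoking the cited results. Nevertheless, I will sketch how one would approach the arguments from scratch.

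For part (a), the strategy is to translate the homological condition $\Ext^1_A(DA, A) = 0$ into a combinatorial condition on the quiver $Q$. Since $A$ is quadratic monomial, each indecomposable injective $I_u = D(A e_u)$ and each indecomposable projective $P_v = e_v A$ admits an explicit basis indexed by paths, and the minimal projective resolution of $I_u$ can be computed from the paths in $Q$ together with the quadratic relations. One then argues that $\Ext^1_A(I_u, P_v) \neq 0$ whenever a certain path configuration from $u$ to $v$ survives the relations; requiring this to vanish for every pair $(u,v)$ rules out paths of length $\geq 3$ in $A$ and forces the vertex set to split into sources, at most one central vertex, and sinks. Combining this with connectedness of $Q$ forces $Q = S_{(r,s)}$ for some $r,s \geq 1$. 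For the second sentence of part (a), since a basic $2$-hereditary algebra has $\gldim \leq 2$, one shows that $\Ext^1_A(DA, A)$ sits in a short exact sequence controlled by $\Ext^2$ data, which vanishes by the $n$-cluster-tilting hypothesis.

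For part (b), the approach builds on (a) by exploiting the star structure and the $2$-Auslander--Reiten translate. In the case $(r,s) = (1,1)$, a direct computation on the three-vertex quiver $x_1 \to z \to y_1$ shows that $\gldim \Lambda = 2$ forces the single quadratic relation $\I = (a_1 b_1)$; the resulting algebra is small enough to verify $2$-representation-finiteness by hand using the formulas in Remark~\ref{rem:n-representation-finite}. For the remaining cases, one must show that extreme values of $\abs{\mathcal{Z}(x_i)}$ or $\abs{\mathcal{Z}(y_j)}$ obstruct $2$-heredity. The values $\abs{\mathcal{Z}(x_i)} \in \{0,s\}$ are ruled out by a projective dimension calculation: if no (resp.\ every) path $a_i b_j$ is zero, then the projective at $x_i$ has the wrong top/socle structure to fit into a $2$-cluster-tilting subcategory. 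The intermediate values $\abs{\mathcal{Z}(x_i)} \in \{1, s-1\}$ are the more delicate ones, and here the argument should proceed by computing $\nu_2^{-}(e_{x_i}\Lambda)$ explicitly and showing that the result either leaves $\modu \Lambda$ or fails to be indecomposable, either of which contradicts $\Lambda$ being $n$-hereditary (cf.\ Remark~\ref{rem:n-representation-finite}(d) and Proposition~\ref{prop:nun remains in mod Lambda}). The symmetric analysis at each $y_j$ gives $2 \leq \abs{\mathcal{Z}(y_j)} \leq r-2$, and these two bounds together immediately force $r, s \geq 4$.

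The main obstacle I anticipate is the analysis of the borderline cases $\abs{\mathcal{Z}(x_i)} \in \{1, s-1\}$ in part (b). The extremes $\{0, s\}$ are controlled by clean projective-dimension arguments, but the borderline cases require a careful tracking of how the syzygy and Nakayama functors interact with the quadratic monomial relations, and it is there that the detailed combinatorics of which paths are nonzero must be used. In part (a), the subtle step is excluding the possibility of a quiver with two or more vertices that are neither sources nor sinks; here one likely uses that any such vertex $w$ produces a nontrivial extension in $\Ext^1_A(DA, A)$ via a path passing through $w$.
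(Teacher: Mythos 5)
The paper gives no proof of this proposition: both parts are imported verbatim from \cite{ST24} (Corollary 4.21, Lemma 4.16 and Lemma 4.24), so your formal proof-by-citation coincides exactly with what the paper does. Your supplementary sketch of how the arguments in \cite{ST24} would go is plausible at this level of detail, but it cannot be compared against anything in this paper, since the actual proofs live entirely in the cited reference.
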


By Proposition \ref{prop:star algebras which are 2-hereditary} we have that every $2$-hereditary quadratic monomial algebra is either of linearly oriented type $A_3$ modulo the ideal generated by all paths of length two, or a balanced $(r,s)$-star algebra. The first case is trivial and hence we may focus on the second one. Our aim is to obtain more necessary conditions in this case. To do this we use Lemma \ref{lem:trivial extension without arrows is setup 1} and the results of Section \ref{Section:One-point extensions of bipartite hereditary algebras}. Our first observation is the following.

\begin{lem}\label{lem:twisted fractionally Calabi-Yau of trivial extension}
Let $\Lambda=\K S_{(r,s)}/\I$ be a balanced $(r,s)$-star algebra. Then $\Lambda$ is twisted fractionally Calabi--Yau if and only if $\Gamma_{\Lambda}$ is twisted fractionally Calabi--Yau. 
\end{lem}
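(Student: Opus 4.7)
The plan is to exploit Proposition \ref{prop: Gamma and Lambda have the same Coxeter polynomial}(d), which provides a derived equivalence $F \colon \db(\modu \Lambda) \xrightarrow{\sim} \db(\modu \Gamma_{\Lambda})$, together with the general fact that the twisted fractionally Calabi--Yau property is preserved under derived equivalence for algebras of finite global dimension. Both $\Lambda$ (by Lemma \ref{lem:n-levelled basic properties}(c), since $\Lambda$ is $2$-levelled) and $\Gamma_{\Lambda}$ (as a one-point extension of a hereditary algebra, or by the derived equivalence) have finite global dimension, so Serre functors exist on both sides.

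By the uniqueness of Serre functors, $F$ intertwines the (iterated) Nakayama functors: $F \circ \nu_{\Lambda}^{l} \simeq \nu_{\Gamma_{\Lambda}}^{l} \circ F$ for every integer $l$. Combined with the characterization from Proposition 4.3(a) of \cite{HerschendIyama10}, namely that an algebra $A$ of finite global dimension is twisted $\tfrac{m}{l}$-Calabi--Yau if and only if $\nu_{A}^{l}(A) \simeq A[m]$ in $\db(\modu A)$, the hypothesis $\nu_{\Lambda}^{l}(\Lambda) \simeq \Lambda[m]$ transports along $F$ to $\nu_{\Gamma_{\Lambda}}^{l}(T) \simeq T[m]$ for the tilting complex $T := F(\Lambda) \in \db(\modu \Gamma_{\Lambda})$. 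Applying the analogous argument to $F^{-1}$ yields the corresponding reformulation of the twisted fractionally Calabi--Yau property for $\Gamma_{\Lambda}$ in terms of the tilting complex $F^{-1}(\Gamma_{\Lambda}) \in \db(\modu \Lambda)$.

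The main obstacle will be the final step: passing from an object-level iso $\nu_{\Gamma_{\Lambda}}^{l}(T) \simeq T[m]$ on the specific tilting complex $T = F(\Lambda)$ to the analogous iso on $\Gamma_{\Lambda}$ itself (and symmetrically on the other side). To resolve this, I would use that the twisted fractionally Calabi--Yau property is equivalent to a functor-level isomorphism $\nu^{l} \simeq (-)_{\psi} \circ [m]$ for some algebra automorphism $\psi$, a property intrinsic to the triangulated category and hence independent of the chosen tilting generator. Combining these ingredients gives the biconditional asserted in the lemma, with the reverse implication following symmetrically by swapping the roles of $\Lambda$ and $\Gamma_{\Lambda}$.
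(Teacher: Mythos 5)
Your overall route --- transport the Calabi--Yau condition along the derived equivalence of Proposition \ref{prop: Gamma and Lambda have the same Coxeter polynomial}(d) --- is viable in principle, since that proposition is proved before this lemma and twisted fractional Calabi--Yau-ness is indeed a derived invariant. But the justification you give for that invariance has a real gap, and it sits exactly at the step you flag as ``the main obstacle''. The functor $(-)_{\psi}\circ[m]$ is \emph{not} intrinsic to the triangulated category: the twist $(-)_{\psi}$ is defined via an automorphism of a particular algebra, and a standard autoequivalence $G$ of $\db(\modu\Gamma_{\Lambda})$ is of the form $(-)_{\psi}$ for an automorphism $\psi$ of $\Gamma_{\Lambda}$ precisely when $G(\Gamma_{\Lambda})\simeq\Gamma_{\Lambda}$ as an object of $\db(\modu\Gamma_{\Lambda})$ (this is the Rouquier--Zimmermann/Yekutieli description of two-sided tilting complexes that are trivial on one side). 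What the commutation of Serre functors gives you is only $\nu_{\Gamma_{\Lambda}}^{l}(T)\simeq T[m]$ for $T=F(\Lambda)$, equivalently that $F\circ(-)_{\phi}\circ F^{-1}$ fixes $T$; there is no reason for this autoequivalence to fix the object $\Gamma_{\Lambda}$, since $(-)_{\phi}$ need not fix the tilting complex $F^{-1}(\Gamma_{\Lambda})$ over $\Lambda$ (the twist can genuinely move a tilting complex). So ``independent of the chosen tilting generator'' is precisely the assertion that needs proof, and your argument does not supply it.

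The statement that twisted fractional Calabi--Yau-ness is preserved under derived equivalence is a genuine theorem of \cite{CDIM20}, proved there by passing to trivial extensions, and citing it would repair your proof. At that point, however, your argument essentially collapses onto the paper's, which is shorter and bypasses the derived equivalence altogether: by \cite[Theorem 1.3]{CDIM20}, an algebra is twisted fractionally Calabi--Yau if and only if its trivial extension is twisted periodic, and Lemma \ref{lem:trivial extension without arrows is setup 1}(b) gives an algebra \emph{isomorphism} $T(\Lambda)\simeq T(\Gamma_{\Lambda})$, so the two conditions are literally the same condition on one algebra. I would recommend either adopting that argument or explicitly invoking the derived-invariance result of \cite{CDIM20} rather than attempting to prove it by the intertwining of Serre functors alone.
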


\begin{proof}
Since $\K$ is algebraically closed, we may apply \cite[Theorem 1.3 (i) and (iv)]{CDIM20} to obtain that $\Lambda$ is twisted fractionally Calabi--Yau if and only if $T(\Lambda)$ is twisted periodic. Since by Lemma \ref{lem:trivial extension without arrows is setup 1} we have that $T(\Lambda)\simeq T(\Gamma_{\Lambda})$, this is equivalent to $\Gamma_{\Lambda}$ being twisted fractionally Calabi--Yau, as claimed.
\end{proof}

To be able to utilize Lemma \ref{lem:twisted fractionally Calabi-Yau of trivial extension}, we focus on a special class of balanced $(r,s)$-star algebras.

\begin{defin}\label{def:semi-regular star algebra}
We say that an $(r,s)$-star algebra $\Lambda=\K S_{(r,s)}/\I$ is \emph{semi-regular of bidegree $(\Sigma_1,\Sigma_2)$} if the underlying bipartite graph $\graph{B_{\Lambda}}$ of the quiver $B_{\Lambda}$ is semi-regular of bidegree $(\Sigma_1,\Sigma_2)$ with respect to the coloring $\{x_1,\ldots,x_r\}\cup\{y_1,\ldots,y_s\}$.
\end{defin}

Unraveling the definition we see that an $(r,s)$-star algebra $\Lambda=\K S_{(r,s)}/\I$ is semi-regular of bidegree $(\Sigma_1,\Sigma_2)$ if and only if for each vertex $x_i$ in the star quiver $S_{(r,s)}$, there exist exactly $\Sigma_1$ arrows $b_j$ such that $a_i b_j\not\in\I$ and that for each vertex $y_j$ in the star quiver $S_{(r,s)}$, there exist exactly $\Sigma_2$ arrows $a_i$ such that $a_ib_j\not\in I$. We remark that we know of no example of a $2$-representation-finite $(r,s)$-star algebra which is not semi-regular. 

Notice that if a semi-regular $(r,s)$-star algebra $\Lambda=\K S_{(r,s)}/\I$ is balanced, then $2\leq \Sigma_1\leq s-2$ and $2\leq \Sigma_2\leq r-2$. In particular, this holds if $\Lambda$ is $2$-representation-finite and $(r,s)\neq (1,1)$.

The following is the main result of this section.

\begin{thm}\label{thrm:twisted fractionally Calabi-Yau semi-regular star algebra}
Assume that $\Lambda=\K S_{(r,s)}/\I$ is a semi-regular $(r,s)$-star algebra of bidegree $(\Sigma_1,\Sigma_2)$. Let $M$ be the unique indecomposable $B_{\Lambda}$-module with dimension vector $(1,\ldots,1)^T$. Then the following hold. 
\begin{enumerate}
    \item[(a)] The quadruple $(B_{\Lambda},M,\Sigma_1,\Sigma_2)$ satisfies the conditions of Setup \ref{setup:graph, quiver, one-point extension} with $\lvert d_x \rvert^2=r$ and $\lvert d_y \rvert^2=s$.
    \item[(b)] $r\Sigma_1 = s\Sigma_2 $.
\end{enumerate}
If moreover $\Lambda$ is balanced and twisted fractionally Calabi--Yau, then the following also hold.
\begin{enumerate}
    \item[(c)] $\Gamma_{\Lambda}$ is twisted fractionally Calabi--Yau.
    \item[(d)] $\Sigma_1\Sigma_2+r+s-r\Sigma_1 \in\{0,1,2,3,4\}$.
    \item[(e)] $\graph{B_{\Lambda}}$ is a reflexive graph. Furthermore, $\graph{B_{\Lambda}}$ is a Salem graph if and only if $\sqrt{\Sigma_1\Sigma_2}>2$.
\end{enumerate}
\end{thm}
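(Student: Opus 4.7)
For part (a), I would observe that the semi-regularity of $\graph{B_\Lambda}$ with bidegree $(\Sigma_1, \Sigma_2)$ is exactly the statement that $d_x := (1, \ldots, 1)^T \in \ZZ^r$ and $d_y := (1, \ldots, 1)^T \in \ZZ^s$ form a bi-eigenvector of $A(\graph{B_\Lambda})$ with bi-eigenvalue $(\Sigma_1, \Sigma_2)$, since $R d_y$ records the degrees of the $x_i$'s and $R^T d_x$ those of the $y_j$'s. The identities $\lvert d_x \rvert^2 = r$ and $\lvert d_y \rvert^2 = s$ are immediate, and the module $M$ has dimension vector $(d_x^T, d_y^T)^T$ by construction, so the remaining hypotheses of Setup \ref{setup:graph, quiver, one-point extension} are met. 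Part (b) is then an instance of Lemma \ref{lem:basic properties of bi-eigenvectors}(c): $r\Sigma_1 = \Sigma_1 \lvert d_x \rvert^2 = \Sigma_2 \lvert d_y \rvert^2 = s\Sigma_2$.

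For (c), I would chain Lemma \ref{lem:trivial extension without arrows is setup 1}(b), which yields $T(\Lambda) \simeq T(\Gamma_\Lambda)$, with Lemma \ref{lem:twisted fractionally Calabi-Yau of trivial extension}, which characterizes twisted fractional Calabi--Yauness through the trivial extension, so that the property transfers from $\Lambda$ to $\Gamma_\Lambda$. Combining this with (a) and (b), the quadruple $(B_\Lambda, M, \Sigma_1, \Sigma_2)$ fits Setup \ref{setup:graph, quiver, one-point extension} and $\Gamma_\Lambda = \K B_\Lambda[M]$ is twisted fractionally Calabi--Yau; parts (d) and (e) thus reduce to a spectral question about the Coxeter polynomial $\CP_{\Gamma_\Lambda}(t)$.

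By Proposition \ref{prop:fractionally Calabi--Yau implies eigenvalues of Coxeter matrix are in unit circle} together with Kronecker's theorem, $\CP_{\Gamma_\Lambda}(t)$ is a product of cyclotomic polynomials. I would use (\ref{eq:cartan matrix of one-point extension}) together with the matrix-determinant lemma to compute $\CP_{\Gamma_\Lambda}(t)$ explicitly; since $\mathbf{1}_r$ and $\mathbf{1}_s$ are simultaneous eigenvectors of the relevant blocks by the semi-regular structure, the polynomial should split off a small-degree factor whose coefficients are explicit polynomials in $r, s, \Sigma_1, \Sigma_2$. Demanding that this factor also be a product of cyclotomic polynomials---matching, for instance, its value at $t = 1$---then pins $\Sigma_1 \Sigma_2 + r + s - r\Sigma_1$ down to a finite set of integers, which I expect to be exactly $\{0, 1, 2, 3, 4\}$, giving (d). For (e), Lemma \ref{lem:bi-eigenvectors and nonnegative integer matrices}(e) identifies $\sqrt{\Sigma_1 \Sigma_2}$ as the largest eigenvalue of $A(\graph{B_\Lambda})$; the same factorization forces every other eigenvalue of $A(\graph{B_\Lambda})$ into $[-2, 2]$, which is reflexivity, and bipartite reflexivity with largest eigenvalue strictly greater than $2$ is the Salem condition. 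The main obstacle is executing the Coxeter polynomial computation in (d) cleanly enough to separate the non-cyclotomic Salem piece coming from the large eigenvalue $\sqrt{\Sigma_1\Sigma_2}$ from the genuinely cyclotomic contributions, and extracting precisely the five-element integer constraint from the integrality of the remaining coefficients.
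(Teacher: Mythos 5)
Your parts (a)--(c) coincide with the paper's argument: the all-ones vectors are a bi-eigenvector by semi-regularity, $\lvert d_x\rvert^2=r$ and $\lvert d_y\rvert^2=s$ are immediate, (b) is the edge count (equivalently Lemma \ref{lem:basic properties of bi-eigenvectors}(c)), and (c) is exactly the chain through $T(\Lambda)\simeq T(\Gamma_\Lambda)$ and Lemma \ref{lem:twisted fractionally Calabi-Yau of trivial extension}. Where you diverge is in (d) and (e): the paper does not compute anything at this point but simply invokes Proposition \ref{prop:properties when one-point extension is fractionally Calabi-Yau} and Proposition \ref{prop:if Gamma is twisted fractionally Calabi-Yau then graph is reflexive}, which are proved earlier in the one-point-extension section and already contain the Coxeter-polynomial analysis you sketch. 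Your plan --- use (\ref{eq:cartan matrix of one-point extension}) and the matrix-determinant lemma to split $\CP_{\Gamma_\Lambda}$ into a cyclotomic part and a small self-reciprocal factor controlled by $r,s,\Sigma_1,\Sigma_2$, then force that factor's roots onto the unit circle via Proposition \ref{prop:fractionally Calabi--Yau implies eigenvalues of Coxeter matrix are in unit circle} and Kronecker --- is exactly the mechanism behind those propositions (the appendix's case split $p=1,2,3,4$ corresponding to Coxeter matrices of order $6,4,3$ and the degenerate case confirms that the relevant factor is a degree-two reciprocal polynomial with $\CP(1)=p$, so $p\in\{0,\dots,4\}$ is the integrality-plus-unit-circle constraint you describe). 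So your route is a legitimate inlining of the cited results rather than a genuinely different proof.

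The one place your sketch is too thin to stand on its own is the passage from the Coxeter spectrum of $\Gamma_\Lambda$ to the adjacency spectrum of $\graph{B_{\Lambda}}$ in (e). Reflexivity is a statement about the eigenvalues of $A(\graph{B_{\Lambda}})$, and you assert that ``the same factorization forces every other eigenvalue of $A(\graph{B_\Lambda})$ into $[-2,2]$'' without establishing the dictionary between the two spectra. For the hereditary part $\K B_\Lambda$ this is the classical relation between the Coxeter polynomial of a bipartite hereditary algebra and the characteristic polynomial of the underlying graph, but $\Gamma_\Lambda=\K B_\Lambda[M]$ is a one-point extension, and one must track how adjoining the extension vertex perturbs that relation before concluding that all eigenvalues of $A(\graph{B_{\Lambda}})$ other than $\pm\sqrt{\Sigma_1\Sigma_2}$ lie in $[-2,2]$. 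That computation is precisely the content of the cited Proposition \ref{prop:if Gamma is twisted fractionally Calabi-Yau then graph is reflexive}, and it needs to be carried out (or cited) for your argument to close. The remaining ingredients of (e) --- that $\sqrt{\Sigma_1\Sigma_2}$ is the top eigenvalue via Lemma \ref{lem:bi-eigenvectors and nonnegative integer matrices}(e), and the definition of a Salem graph --- are handled correctly.
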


\begin{proof}
Part (b) follows by the semi-regularity assumption. We now show part (a). Notice that conditions (i) and (ii) of Setup \ref{setup:graph, quiver, one-point extension} are satisfied by definition and so we only need to show condition (iii). Set $d_{M}^{T}=\dimv(M)$, $d_x=(1,\ldots,1)^T\in\CC^{r}$ and $d_{y}=(1,\ldots,1)^T\in\CC^{s}$. Clearly we have that $\lvert d_x \rvert^2=r$ and $\lvert d_y \rvert^2=s$. We claim that $\begin{bsmallmatrix} d_x \\ d_y \end{bsmallmatrix}$ is a bi-eigenvector of $A(\graph{B_{\Lambda}})$ with bi-eigenvalue $(\Sigma_1,\Sigma_2)$. First notice that the matrix $A(\graph{B_{\Lambda}})$ has the form
\[
A(\graph{B_{\Lambda}}) =\begin{bmatrix}
0_r & R \\
R^T & 0_s
\end{bmatrix}
\]
where $R$ is an $r\times s$ matrix such that
\[
r_{ij} = \begin{cases}
    1, &\mbox{if $a_i b_j\not\in \I$,} \\
    0, &\mbox{if $a_i b_j\in \I$.}
\end{cases}
\]
Let $R_i$ be the $i$-th column of $R$ and $C_j$ be the $j$-th column of $R$. We count the numbers of $1$'s in $R_i$ and $C_j$ via 
\[
\abs{R_i} \coloneqq \sum_{j=1}^{s}r_{ij} \text{ and } \abs{C_j} \coloneqq \sum_{i=1}^{r} r_{ij}.
\]
Then
\[
\abs{R_i} = \abs{\{\text{$y_j\in Q_0$ such that $a_i b_j\neq 0$}\}} = \deg_{B_{\Lambda}}(x_i) = \Sigma_1,
\]
where the last equality follows since $\Lambda$ is semi-regular of bidegree $(\Sigma_1,\Sigma_2)$. Similarly we obtain $\abs{C_j}=\Sigma_2$. Then
\[
\begin{bmatrix}
0_r & R \\
R^T & 0_s
\end{bmatrix}
\begin{bmatrix}
    d_x \\ d_y 
\end{bmatrix}=
\begin{bmatrix}
0_r & R \\
R^T & 0_s
\end{bmatrix}\begin{bmatrix}
    1 \\ \vdots \\ 1 
\end{bmatrix} = \begin{bmatrix}
    \abs{R_1} \\ \vdots \\ \abs{R_r} \\ \abs{C_1} \\ \vdots \\ \abs{C_s} 
\end{bmatrix} = \begin{bmatrix}
    \Sigma_1 \\ \vdots \\ \Sigma_1 \\ \Sigma_2 \\ \vdots \\ \Sigma_2 
\end{bmatrix}  
\]
which shows our claim and proves part (a). 

Next assume that $\Lambda$ is twisted fractionally Calabi--Yau. By Lemma \ref{lem:twisted fractionally Calabi-Yau of trivial extension} we obtain that $\Gamma_{\Lambda}$ is also twisted fractionally Calabi--Yau, proving part (c). Hence the conditions of Proposition \ref{prop:properties when one-point extension is fractionally Calabi-Yau} hold for $\Gamma_{\Lambda}=\K B_{\Lambda}[M]$. In particular we obtain that 
\[
\Sigma_1\Sigma_2 + \lvert d_x \rvert^2 + \lvert d_y \rvert^2 - \lvert d_x \rvert^2 \Sigma_1\in \{0,1,2,3,4\}. 
\]
By using that $\lvert d_x \rvert^2=r$ and $\lvert d_y \rvert^2=s$, we obtain part (d).

Finally, by Proposition \ref{prop:star algebras which are 2-hereditary}(b) and since $\Lambda$ is balanced we have that $\Sigma_1,\Sigma_2\geq 2$, from which we obtain that $\sqrt{\Sigma_1\Sigma_2}\geq 2$. Part (e) follows immediately by Proposition \ref{prop:if Gamma is twisted fractionally Calabi-Yau then graph is reflexive}.
\end{proof}

In particular, we are especially interested in the case where a star algebra is $2$-representation-finite. Since being $2$-representation-finite implies being twisted fractionally Calabi--Yau, we obtain this corollary of Theorem \ref{thrm:twisted fractionally Calabi-Yau semi-regular star algebra}.

\begin{cor}\label{cor:2-RF semi-regular star algebra}
Let $\Lambda=\K S_{(r,s)}/\I$ be a $2$-representation-finite $(r,s)$-star algebra. Then the following holds.
\begin{enumerate}
    \item[(a)] $\graph{B_{\Lambda}}$ is a reflexive graph. Furthermore, $\graph{B_{\Lambda}}$ is a Salem graph if and only if $\sqrt{\Sigma_1\Sigma_2}>2$.
\end{enumerate}
Assume moreover that $\Lambda$ is balanced. Then the following holds.
\begin{enumerate}
    \item[(b)] $\Lambda$ is $\ell$-homogeneous. 
\end{enumerate}
Finally, assume moreover that $\Lambda$ is semi-regular of bidegree  $(\Sigma_1,\Sigma_2)$. Then the following also hold.
\begin{enumerate}
    \item[(c)] $r\Sigma_1 = s\Sigma_2 $.
    \item[(d)] $\Sigma_1\Sigma_2+r+s-r\Sigma_1 \in\{1,2,3,4\}$.
\end{enumerate}
\end{cor}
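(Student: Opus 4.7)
The plan is to derive the corollary as a specialization of Theorem \ref{thrm:twisted fractionally Calabi-Yau semi-regular star algebra}, after upgrading $2$-representation-finiteness to twisted fractional Calabi--Yauness via Theorem \ref{thm:twisted fractionally CY and n-RF}(a).

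First I would apply Theorem \ref{thm:twisted fractionally CY and n-RF}(a) to conclude that $\Lambda$ is twisted fractionally Calabi--Yau, after which part (a) follows directly from Theorem \ref{thrm:twisted fractionally Calabi-Yau semi-regular star algebra}(e) and part (c) from Theorem \ref{thrm:twisted fractionally Calabi-Yau semi-regular star algebra}(b). For part (b), I would first verify that balancedness forces $r, s \geq 2$: were $r = 1$, the only length-two paths through $a_1$ would be of the form $a_1 b_j$, so balancedness of each $b_j$ would require $a_1 b_j$ to simultaneously lie and not lie in $\I$, a contradiction; the case $s = 1$ is symmetric. With $r, s \geq 2$, the star quiver $S_{(r,s)}$ has a single vertex at level $1$ and at least two vertices at each of levels $0$ and $2$, so it is $1$-light, and Proposition \ref{prop:2-levelled light and 2-RF is twisted fractionally Calabi--Yau}(b) then yields $\ell$-homogeneity.

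Part (d) is the delicate point. Theorem \ref{thrm:twisted fractionally Calabi-Yau semi-regular star algebra}(d) only gives the weaker bound $\Sigma_1\Sigma_2 + r + s - r\Sigma_1 \in \{0, 1, 2, 3, 4\}$, so the remaining task is to rule out the value $0$. That weaker bound uses only that the Coxeter eigenvalues of $\Gamma_\Lambda$ lie on the unit circle (Proposition \ref{prop:fractionally Calabi--Yau implies eigenvalues of Coxeter matrix are in unit circle}); however, $2$-representation-finiteness combined with part (b) and the derived equivalence between $\Lambda$ and $\Gamma_\Lambda$ supplied by Proposition \ref{prop: Gamma and Lambda have the same Coxeter polynomial} forces the Coxeter eigenvalues of $\Gamma_\Lambda$ to be actual roots of unity rather than merely unit complex numbers. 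The main obstacle I anticipate is pinpointing which spectral configuration the value $0$ corresponds to: I expect it to be the degenerate regime where the second-largest eigenvalue of $A(\overline{B_\Lambda})$ equals $2$, producing a non-cyclotomic factor of the Coxeter polynomial, and then verifying that this configuration is incompatible with the $\ell$-homogeneous $2$-representation-finite structure obtained in part (b).
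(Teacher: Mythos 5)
Parts (a)--(c) of your proposal follow essentially the paper's route, with two small caveats. For (a), Theorem \ref{thrm:twisted fractionally Calabi-Yau semi-regular star algebra}(e) requires $\Lambda$ to be \emph{balanced} (in addition to twisted fractionally Calabi--Yau), so it does not cover the unique non-balanced $2$-representation-finite star algebra, namely $(r,s)=(1,1)$ with $\I=(a_1b_1)$ from Proposition \ref{prop:star algebras which are 2-hereditary}(b); the paper disposes of that case by hand ($\graph{B_{\Lambda}}$ is two vertices with no edge, its only eigenvalue is $0$, so it is reflexive and not Salem) and only then assumes balancedness. For (b), your elementary derivation of $r,s\geq 2$ directly from the definition of balancedness is correct and slightly more self-contained than the paper's appeal to Proposition \ref{prop:star algebras which are 2-hereditary}(b) (which gives $r,s\geq 4$); from there both arguments conclude identically via $1$-lightness and Proposition \ref{prop:2-levelled light and 2-RF is twisted fractionally Calabi--Yau}(b). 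Part (c) is the same as the paper.

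The genuine gap is in part (d). You correctly reduce the problem to excluding the value $p:=\Sigma_1\Sigma_2+r+s-r\Sigma_1=0$, but you do not exclude it: you only state an expectation that $p=0$ corresponds to a degenerate spectral configuration (second-largest adjacency eigenvalue equal to $2$) and that one ``would then verify'' its incompatibility with the $\ell$-homogeneous $2$-representation-finite structure. Neither step is carried out, and the second is precisely the content that needs proof. The paper closes this by citing Corollary \ref{cor:proposition about order of matrix holds for Gamma twisted fractionally Calabi-Yau} from Section \ref{Section:One-point extensions of bipartite hereditary algebras}, which shows that $p=0$ forces $-\Phi_{\Gamma_{\Lambda}}$ to have infinite order; since $\Gamma_{\Lambda}$ is twisted fractionally Calabi--Yau by Theorem \ref{thrm:twisted fractionally Calabi-Yau semi-regular star algebra}(c), this contradicts Proposition \ref{prop:fractionally Calabi--Yau implies eigenvalues of Coxeter matrix are in unit circle}. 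Note also that your proposed dichotomy (``unit circle'' versus ``roots of unity'') is not where the extra leverage lies: the proof of Proposition \ref{prop:fractionally Calabi--Yau implies eigenvalues of Coxeter matrix are in unit circle} already shows that $[\nu^{l}\comp[-m]]$ is a signed permutation matrix, so the Coxeter matrix of any twisted fractionally Calabi--Yau algebra has finite order without any appeal to $2$-representation-finiteness. What is missing from your argument, and what you would need to supply or cite, is the quantitative link between the value of $p$ and the order of $-\Phi_{\Gamma_{\Lambda}}$.
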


\begin{proof}
Recall that by Proposition \ref{prop:star algebras which are 2-hereditary}(b) we have that either $(r,s)=(1,1)$ and $\graph{B_{\Lambda}}$ is the graph with two vertices and no edge, or $\Lambda$ is balanced. In the first case the only eigenvalue of the adjacency matrix of $\graph{B_{\Lambda}}$ is $0$ and so $\graph{B_{\Lambda}}$ is clearly reflexive but not Salem, showing part (a) for this case. Hence for the rest of the proof we may assume that $\Lambda$ is balanced.

Since $\Lambda$ is $2$-representation-finite, we have that $\Lambda$ is twisted fractionally Calabi--Yau by Theorem \ref{thm:twisted fractionally CY and n-RF}(a). Hence part (a) is just part (e) of Theorem \ref{thrm:twisted fractionally Calabi-Yau semi-regular star algebra}. Moreover, since $\Lambda$ is balanced and $2$-representation-finite we have that $r,s\geq 4$ by Proposition \ref{prop:star algebras which are 2-hereditary}. Hence $\Lambda$ is $0$-light and so $\Lambda$ is $\ell$-homogeneous by Proposition \ref{prop:2-levelled light and 2-RF is twisted fractionally Calabi--Yau}(b), showing part (b). Assuming that $\Lambda$ is semi-regular of bidegree  $(\Sigma_1,\Sigma_2)$, we obtain part (c) by Theorem \ref{thrm:twisted fractionally Calabi-Yau semi-regular star algebra}(b) we obtain that $\Sigma_1\Sigma_2+r+s-r\Sigma_1\in\{0,1,2,3,4\}$ by by Theorem \ref{thrm:twisted fractionally Calabi-Yau semi-regular star algebra}(d). Hence it is only left to show that $\Sigma_1\Sigma_2+r+s-r\Sigma_1\neq 0$.  

Assume to a contradiction that $\Sigma_1\Sigma_2+r+s-r\Sigma_1=0$. By Theorem \ref{thrm:twisted fractionally Calabi-Yau semi-regular star algebra} we have that the quadruple $(B_{\Lambda},M,\Sigma_1,\Sigma_2)$ satisfies the conditions of Setup \ref{setup:graph, quiver, one-point extension} with $\lvert d_x \rvert^2=r$ and $\lvert d_y \rvert^2=s$ and the $\Gamma_{\Lambda}$ is twisted fractionally Calabi--Yau. By Corollary \ref{cor:proposition about order of matrix holds for Gamma twisted fractionally Calabi-Yau} we obtain that the order of the matrix $-\Phi_{\Gamma_{\Lambda}}$ is infinite. But this contradicts Proposition \ref{prop:fractionally Calabi--Yau implies eigenvalues of Coxeter matrix are in unit circle}.
\end{proof}

The following proposition shows another strong connection between the higher homological properties of a balanced star algebra $\Lambda$ and the algebra $\Gamma_{\Lambda}$.

\begin{prop}
Let $\Lambda=\K S_{(r,s)}/\I$ be a balanced $(r,s)$-star algebra. 
Then $\Lambda$ is $2$-representation-finite if and only if $\Gamma_{\Lambda}$ is $2$-representation-finite.
In particular, $\Gamma_{\Lambda}$ is $\ell$-homogeneous.
\end{prop}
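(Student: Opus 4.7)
The plan is to transfer the twisted fractional Calabi--Yau property between $\Lambda$ and $\Gamma_\Lambda$ via the isomorphism $T(\Lambda) \simeq T(\Gamma_\Lambda)$ from Lemma \ref{lem:trivial extension without arrows is setup 1}(b), and then apply Theorem \ref{thm:twisted fractionally CY and n-RF}(b), which characterises being $\ell$-homogeneous $2$-representation-finite as being twisted $\tfrac{2(\ell-1)}{\ell}$-Calabi--Yau together with $\gldim \leq 2$.

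The first step is to observe that both algebras are $2$-levelled and $i$-light for an appropriate $i$: $\Lambda$ is $2$-levelled by construction, and since $\Lambda$ is balanced Proposition \ref{prop:star algebras which are 2-hereditary}(b)(ii) gives $r, s \geq 4$ whenever $\Lambda$ is also $2$-representation-finite, so $\Lambda$ is $1$-light. By Lemma \ref{lem:trivial extension without arrows is setup 1}(a), the quiver of $\Gamma_\Lambda$ has the $x_i$'s at level $0$, the $y_j$'s at level $1$, and $z$ at level $2$, making $\Gamma_\Lambda$ $2$-levelled and $2$-light. Lemma \ref{lem:n-levelled basic properties}(c) then gives $\gldim(\Lambda), \gldim(\Gamma_\Lambda)\leq 2$, and the balancedness of $\Lambda$ (together with the non-trivial commutativity relations in $\Gamma_\Lambda$ found in the proof of Lemma \ref{lem:trivial extension without arrows is setup 1}(a)) ensures both global dimensions equal $2$.

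For the forward direction, assuming $\Lambda$ is $2$-representation-finite, Proposition \ref{prop:2-levelled light and 2-RF is twisted fractionally Calabi--Yau}(b) applied to $\Lambda$ provides a positive integer $\ell$ such that $\Lambda$ is $\ell$-homogeneous and twisted $\tfrac{2(\ell-1)}{\ell}$-Calabi--Yau. I would then invoke Lemma \ref{lem:twisted fractionally Calabi-Yau of trivial extension} together with $T(\Lambda) \simeq T(\Gamma_\Lambda)$ to transfer the twisted fractional Calabi--Yau property to $\Gamma_\Lambda$. Crucially, the correspondence from \cite{CDIM20} invoked in Lemma \ref{lem:twisted fractionally Calabi-Yau of trivial extension} relates the twisted period of $T(A)$ to the twisted Calabi--Yau dimension $\tfrac{(\ell-1)\gldim(A)}{\ell}$ of $A$, so combined with $\gldim(\Lambda) = \gldim(\Gamma_\Lambda) = 2$, we obtain that $\Gamma_\Lambda$ is also twisted $\tfrac{2(\ell-1)}{\ell}$-Calabi--Yau. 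Theorem \ref{thm:twisted fractionally CY and n-RF}(b) then yields that $\Gamma_\Lambda$ is $\ell$-homogeneous $2$-representation-finite. The backward implication is entirely symmetric, using that $\Gamma_\Lambda$ is $2$-levelled $2$-light so that Proposition \ref{prop:2-levelled light and 2-RF is twisted fractionally Calabi--Yau}(b) applies to it as well. The ``in particular'' assertion that $\Gamma_\Lambda$ is $\ell$-homogeneous is then immediate from either direction.

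The main obstacle in this approach lies in ensuring that the specific Calabi--Yau dimension $\tfrac{2(\ell-1)}{\ell}$, and not merely the existence of some twisted fractional Calabi--Yau dimension, transfers correctly through $T(\Lambda)\simeq T(\Gamma_\Lambda)$; since both $\Lambda$ and $\Gamma_\Lambda$ have global dimension $2$, the period-to-dimension formula from \cite{CDIM20} forces the same dimension on both sides. An alternative route would be to use the derived equivalence from Proposition \ref{prop: Gamma and Lambda have the same Coxeter polynomial}(d), exploiting that the twisted fractional Calabi--Yau dimension is a derived invariant for finite-dimensional algebras of finite global dimension; this avoids passing through the trivial extension but requires more care with how the Nakayama functor acts on the tilting complex $X$ transporting $\Gamma_\Lambda$ to $\Lambda^{!}$.
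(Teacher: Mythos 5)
Your reduction to the twisted fractionally Calabi--Yau property has a genuine gap at the transfer step. To conclude via Theorem \ref{thm:twisted fractionally CY and n-RF}(b) that $\Gamma_{\Lambda}$ is $\ell$-homogeneous $2$-representation-finite, you need the \emph{specific} isomorphism $\nu_{\Gamma_{\Lambda}}^{\ell}\isom(-)_{\psi}\circ[2(\ell-1)]$, i.e.\ twisted Calabi--Yau data with that exact pair of integers $(2(\ell-1),\ell)$. But Lemma \ref{lem:twisted fractionally Calabi-Yau of trivial extension}, and the result of \cite{CDIM20} behind it, only transfer the \emph{qualitative} property ``twisted fractionally Calabi--Yau'' through $T(\Lambda)\isom T(\Gamma_{\Lambda})$: the twisted period of the trivial extension controls the numerator and denominator of the Calabi--Yau dimension only jointly (essentially their sum), not separately. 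Your assertion that \cite{CDIM20} relates the period of $T(A)$ to a dimension of the form $\tfrac{(\ell-1)\gldim(A)}{\ell}$ is not what that reference provides; by Theorem \ref{thm:twisted fractionally CY and n-RF}(b), having Calabi--Yau dimension of that particular form (with $\gldim\leq n$) is \emph{equivalent} to being $\ell$-homogeneous $n$-representation-finite, so positing it for $\Gamma_{\Lambda}$ assumes the conclusion. Knowing that both algebras have global dimension $2$ does not determine how the period splits. Your alternative route via the derived equivalence of Proposition \ref{prop: Gamma and Lambda have the same Coxeter polynomial}(d) runs into the analogous problem: from $F\circ\nu_{\Lambda}\isom\nu_{\Gamma_{\Lambda}}\circ F$ one gets $\nu_{\Gamma_{\Lambda}}^{\ell}(\Gamma_{\Lambda})\isom F\bigl((F^{-1}\Gamma_{\Lambda})_{\phi}\bigr)[2(\ell-1)]$, and one must still show that the twist $(-)_{\phi}$ fixes the tilting complex $F^{-1}\Gamma_{\Lambda}$ up to isomorphism before this reads as $(\Gamma_{\Lambda})_{\psi}[2(\ell-1)]$; you flag this but do not supply the argument.

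The paper sidesteps the issue by a different mechanism. Since $\Lambda$ is Koszul and $2$-representation-finite, one shows $\Pi(\Lambda^{!})^{!}\isom T(\Lambda)$ using \cite[Theorem 5.2(c)]{GrantIyama2020}, and that $(\Gamma_{\Lambda})^{!}$ arises as a cut of the resulting selfinjective quiver with potential; cuts of selfinjective quivers with potential are $2$-representation-finite by \cite[Theorem 3.11]{HI11}, which gives the conclusion directly without ever tracking a Calabi--Yau dimension. The $\ell$-homogeneity of $\Gamma_{\Lambda}$ is then deduced afterwards from Proposition \ref{prop:2-levelled light and 2-RF is twisted fractionally Calabi--Yau}(b), exactly as in your setup (your verification that $\Gamma_{\Lambda}$ is $2$-levelled and light, using balancedness to get $r,s\geq 2$, is correct and is the part of your write-up that survives). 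To salvage your approach you would need to prove that the precise twisted Calabi--Yau data $(2(\ell-1),\ell)$ is preserved under the trivial-extension isomorphism or the derived equivalence, which is essentially as hard as the statement itself.
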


\begin{proof}
Note that the second claim follows from the first by Proposition \ref{prop:2-levelled light and 2-RF is twisted fractionally Calabi--Yau}(b).

To show the first claim, we recall that $\Lambda$ is Koszul.
Thus, if $\Lambda$ is $2$-representation-finite, then $\Pi(\Lambda^!)$ is almost Koszul by \cite{GrantIyama2020}. 
By Proposition \ref{prop: quiver and relations for trivial extension of star quiver algebra}, we know that $T(\Lambda)$ is a quadratic algebra, i.e.\ all its relations are sums of paths of length $2$. 
It is easy to check that we have $\Lambda_2 = \soc_{\Lambda^{e}} \Lambda$, where $\Lambda_2$ is the degree $2$ part of $\Lambda$ and $\Lambda^{e} = \Lambda^{\op} \otimes \Lambda$ is the enveloping algebra of $\Lambda$. 
Hence, we can apply \cite[Theorem 5.2 (c)] {GrantIyama2020} and deduce that $\Pi(\Lambda^{!})^{!} \simeq T(\Lambda)$. 

Considering an explicit description of the quadratic dual of a quadratic algebra in terms of quivers with relations as in \cite[Definition 2.8.1.]{Beilinson-Ginzburg-Soergel}, we see that $(\Gamma_{\Lambda})^{!}$ can be obtained as a cut 
of $\Pi(\Lambda^{!})^{!}$ in the sense of \cite[Definition 3.1]{HI11}, and hence we have shown one implication in the first claim. 
Indeed, every cut of a selfinjective quiver with potential is $2$-representation-finite by \cite[Theorem 3.11]{HI11}, and the $(2+1)$-preprojective algebra of a $2$-representation-finite algebra can be given by a selfinjective quiver with potential by \cite[Proposition 3.9]{HI11}.
The other implication follows by essentially the same argument.
\end{proof}

\subsection{Classification results in the regular and edge-transitive case}\label{sec:classification results}

In this section we classify quadratic monomial $2$-representation-finite algebras $\Lambda=\K S_{(r,s)}$ based on certain graph theoretic properties of the graph $\graph{B_{\Lambda}}$ under the assumption that the field $\K$ has characteristic zero. The proofs are given using classification results from the literature of graph theory and are included in Appendix \ref{Section:Classification in regular and edge-transitive case} for the interested reader. For naming conventions on graphs we use the Encyclopedia of Graphs \cite{EoG}. We start with the case where $\graph{B_{\Lambda}}$ is regular.

\begin{prop}\label{prop: classification in the regular case}
Assume that $\charac(\K)=0$. Let $\Lambda=\K S_{(r,s)}/\I$ be a $2$-representation-finite $(r,s)$-star algebra. If $B_{\Lambda}$ is regular, then the underlying graph $\graph{B_{\Lambda}}$ is one of the following:
\begin{enumerate}[(a)]
    \item the bipartite complement of the path graph $P_2$;  
    \item the cycle graph $C_8$; 
    \item the Heawood graph (G-5 in the Encyclopedia of graphs) or its bipartite complement.
\end{enumerate}
\end{prop}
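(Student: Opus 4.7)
The plan is to combine the Diophantine constraints from Corollary \ref{cor:2-RF semi-regular star algebra} with a case analysis on $(r,\Sigma)$, and then invoke classical classification results for bipartite regular graphs to pin down the finitely many survivors.

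First, a regular bipartite graph is semi-regular of common bidegree $(\Sigma,\Sigma)$. If $\Sigma=0$ then $B_{\Lambda}$ has no arrows, and Proposition \ref{prop:star algebras which are 2-hereditary}(b) forces $(r,s)=(1,1)$, giving case (a). Otherwise, Corollary \ref{cor:2-RF semi-regular star algebra}(c) yields $r=s$, Proposition \ref{prop:star algebras which are 2-hereditary}(b) forces $r\geq 4$ and $2\leq \Sigma\leq r-2$, and Corollary \ref{cor:2-RF semi-regular star algebra}(d) becomes the single equation
\[
k := \Sigma^{2}-(\Sigma-2)r \in \{1,2,3,4\}.
\]
Writing $r(\Sigma-2)=(\Sigma-2)(\Sigma+2)+(4-k)$ shows $\Sigma-2 \mid 4-k$. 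This gives the infinite family $k=4$ (namely $\Sigma=2$ with arbitrary $r\geq 4$, together with $r=\Sigma+2$ for each $\Sigma\geq 3$) and the sporadic list
\[
(\Sigma,r)\in\{(3,6),(3,7),(3,8),(4,7),(5,8)\}
\]
coming from $k\in\{1,2,3\}$.

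Next, for each such pair, I would enumerate the connected $\Sigma$-regular bipartite graphs on $2r$ vertices using \cite{EoG}: the subfamily $\Sigma=2$ produces only $C_{2r}$; the subfamily $r=\Sigma+2$ has bipartite complement which is $2$-regular bipartite, so by connectedness it is a single cycle $C_{2r}$; and the sporadic pairs give explicit finite lists. The key additional constraint is that, by Theorem \ref{thm:twisted fractionally CY and n-RF}(a), Theorem \ref{thrm:twisted fractionally Calabi-Yau semi-regular star algebra}(c), Proposition \ref{prop: Gamma and Lambda have the same Coxeter polynomial} and Proposition \ref{prop:fractionally Calabi--Yau implies eigenvalues of Coxeter matrix are in unit circle}, the Coxeter polynomial of $\Gamma_{\Lambda}=\K B_{\Lambda}[M]$ must have all roots on the unit circle. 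Using (\ref{eq:cartan matrix of one-point extension}) to compute the Cartan matrix of $\Gamma_{\Lambda}$ explicitly in each case, a direct calculation eliminates $C_{2r}$ for $r\geq 5$, eliminates all but $(3,7)$ and $(4,7)$ from the sporadic list, and in those two surviving cases a standard graph-theoretic fact (the Heawood graph is the unique connected cubic bipartite graph on $14$ vertices with girth $6$, and in this parameter range no other cubic bipartite graph has all non-trivial eigenvalues bounded by $2$ in absolute value) pins down $\graph{B_{\Lambda}}$ uniquely up to bipartite complementation.

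The main obstacle is the Coxeter polynomial computation ruling out the infinite family beyond $C_8$ and the three excluded sporadic pairs $(3,6),(3,8),(5,8)$: although each case is finite and mechanical, the verification splits into several subcases depending on the isomorphism type of the candidate graph, and this is the content that I would defer to the appendix.
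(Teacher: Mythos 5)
Your reduction to the Diophantine constraint $\Sigma^2-(\Sigma-2)r\in\{1,2,3,4\}$ and the resulting list of parameter pairs is correct and matches the paper's Appendix~\ref{Section:Solutions to the system of linear Diophantine equations}. The fatal problem is the elimination criterion you rely on afterwards. You propose to rule out candidates by checking that the Coxeter polynomial of $\Gamma_{\Lambda}$ has a root off the unit circle, but this test is provably too weak for the infinite family $\Sigma=2$. For the cyclic relation pattern giving $\graph{B_{\Lambda}}=C_{2r}$, one computes directly (the Cartan matrix is block upper triangular with a circulant block $N=I+P$) that the Coxeter matrix of $\Lambda$ decomposes into $2\times 2$ blocks with characteristic polynomial $t^2-2\cos(2\pi k/r)t+1$ for $k=1,\dots,r-1$ together with one $3\times 3$ block with characteristic polynomial $(t-1)^2(t+1)$; hence \emph{every} root of the Coxeter polynomial is a root of unity, for every $r\geq 3$. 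What distinguishes $r=4$ from $r\geq 5$ is not the spectrum but the Jordan structure: the $3\times 3$ block is diagonalizable at the eigenvalue $1$ exactly when $r=4$, so for $r\neq 4$ the Coxeter matrix has infinite order even though its characteristic polynomial is a product of cyclotomic polynomials. Since twisted fractional Calabi--Yau-ness forces $[\nu]^l$ to be (up to sign) a permutation matrix, the correct necessary condition is finiteness of the \emph{order} of the Coxeter matrix, which is strictly stronger than Proposition~\ref{prop:fractionally Calabi--Yau implies eigenvalues of Coxeter matrix are in unit circle}; this is exactly how the paper disposes of the $p=4$ family (Corollary~\ref{cor:all the possible cases}(d), via the results of Section~\ref{Section:One-point extensions of bipartite hereditary algebras}).

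Even granting the stronger finite-order test, your plan still cannot close the sporadic cases. For $(\Sigma,r)=(3,6)$ the graph $I_1$ of \cite{KS13} has a Coxeter matrix of finite order and is eliminated only because $\Ext^1_{\Lambda}(D\Lambda,\Lambda)\neq 0$; for $(3,8)$ the M\"obius--Kantor graph survives both the finite-order test and $\Ext^1_{\Lambda}(D\Lambda,\Lambda)=0$, and is killed only by showing $\Ext^1_{\Lambda}(\tau_2 D\Lambda,\Lambda)\neq 0$ (equivalently, that the $3$-preprojective algebra is infinite-dimensional); and for $(3,7)$ reflexivity does \emph{not} single out the Heawood graph, since the graphs $J_4$, $J_5$ of \cite{KS13} are also $3$-regular bipartite reflexive on $14$ vertices and must be excluded by the finite-order and $\Ext$-computations respectively. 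So the argument needs, in addition to the Diophantine and spectral reductions, genuinely representation-theoretic obstructions ($\Ext$-vanishing conditions and the order, not the spectrum, of the Coxeter matrix), which your proposal does not supply.
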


Based on this characterization we can show the following. 

\begin{cor}\label{cor:the graph B_Lambda is reflexive and almost always Salem}
Assume that $\charac(\K)=0$. Let $\Lambda=\K S_{(r,s)}/\I$ be a $2$-representation-finite $(r,s)$-star algebra. Assume that $\Lambda$ is semi-regular of bidegree $(\Sigma_1,\Sigma_2)$. Then $\graph{B_{\Lambda}}$ is a reflexive graph. Moreover, $\graph{B_{\Lambda}}$ is also a Salem graph if and only if $(r,s)\neq (1,1)$ and $(r,s)\neq (4,4)$.
\end{cor}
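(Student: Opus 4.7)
The first claim, that $\graph{B_{\Lambda}}$ is reflexive, is immediate from Corollary \ref{cor:2-RF semi-regular star algebra}(a), so no further work is needed there. For the Salem statement, the plan is to recall that, by definition, $\graph{B_{\Lambda}}$ is Salem if and only if it is bipartite, reflexive, and has largest eigenvalue strictly greater than $2$. Bipartiteness is built into the construction of $B_{\Lambda}$ and reflexivity has just been established, so the task reduces to identifying precisely when the largest eigenvalue exceeds $2$. By Theorem \ref{thrm:twisted fractionally Calabi-Yau semi-regular star algebra}(a) combined with Lemma \ref{lem:bi-eigenvectors and nonnegative integer matrices}(e), this largest eigenvalue equals $\sqrt{\Sigma_1 \Sigma_2}$, so Salem is equivalent to $\Sigma_1 \Sigma_2 > 4$.

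I will then split into cases according to $(r,s)$. First, if $(r,s) = (1,1)$, Proposition \ref{prop:star algebras which are 2-hereditary}(b)(i) forces $\I = (a_1 b_1)$, so $B_{\Lambda}$ has no arrows and $\Sigma_1 = \Sigma_2 = 0$; hence $\Sigma_1 \Sigma_2 = 0$ and $\graph{B_{\Lambda}}$ is not Salem. If instead $(r,s) \neq (1,1)$, then Proposition \ref{prop:star algebras which are 2-hereditary}(b)(ii) says that $\Lambda$ is balanced with $r,s \geq 4$ and $\Sigma_1, \Sigma_2 \geq 2$. In particular $\Sigma_1 \Sigma_2 \geq 4$, so the failure of the Salem property is equivalent to the equality $\Sigma_1 = \Sigma_2 = 2$.

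In that equality case, Corollary \ref{cor:2-RF semi-regular star algebra}(c) gives $r\Sigma_1 = s\Sigma_2$ and hence $r = s$, so $B_{\Lambda}$ is in fact regular. Proposition \ref{prop: classification in the regular case} now restricts $\graph{B_{\Lambda}}$ to the four graphs listed there, and the final ingredient is a degree check to pick out the unique $2$-regular candidate. The cycle $C_8$ is $2$-regular; the Heawood graph is $3$-regular and its bipartite complement is $4$-regular; and the bipartite complement of $P_2$ contains no edges, hence is $0$-regular. Therefore $\Sigma_1 = \Sigma_2 = 2$ forces $\graph{B_{\Lambda}} = C_8$, which corresponds to $(r,s) = (4,4)$. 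Putting the cases together, $\graph{B_{\Lambda}}$ is Salem if and only if $(r,s) \neq (1,1)$ and $(r,s) \neq (4,4)$.

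The only real obstacle is the final bookkeeping step, namely verifying the degrees of the graphs listed in Proposition \ref{prop: classification in the regular case} to conclude that $C_8$ is the unique $2$-regular representative; this is a short graph-theoretic check rather than a substantive argument, and everything else reduces to invoking the results already at hand.
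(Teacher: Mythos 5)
Your proposal is correct and follows essentially the same route as the paper: reduce via Corollary \ref{cor:2-RF semi-regular star algebra}(a) to deciding when $\sqrt{\Sigma_1\Sigma_2}>2$, use Proposition \ref{prop:star algebras which are 2-hereditary}(b) to split into the $(1,1)$ case and the balanced case with $\Sigma_1,\Sigma_2\geq 2$, and dispose of the borderline case $\Sigma_1=\Sigma_2=2$ by observing that $\graph{B_{\Lambda}}$ is then regular and invoking Proposition \ref{prop: classification in the regular case} to pin down $C_8$ and $(r,s)=(4,4)$. The only cosmetic differences are that you rederive the largest-eigenvalue formula from Lemma \ref{lem:bi-eigenvectors and nonnegative integer matrices}(e) rather than quoting the Salem criterion already packaged in Corollary \ref{cor:2-RF semi-regular star algebra}(a), and that you correctly treat the edgeless $(1,1)$ case directly.
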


\begin{proof}
By Corollary \ref{cor:2-RF semi-regular star algebra}(a) it is enough to show that $\sqrt{\Sigma_1\Sigma_2}\leq 2$ if and only if $(r,s)= (1,1)$ or $(r,s)= (4,4)$. 

Since $\Lambda$ is $2$-representation-finite, by Proposition \ref{prop:star algebras which are 2-hereditary}(b) we have that $(r,s,\Sigma_1,\Sigma_2)=(1,1,1,1)$ or $\Lambda$ is balanced and $2\leq \Sigma_1\leq s-2$ and $2\leq \Sigma_2\leq r-2$. Hence $\sqrt{\Sigma_1\Sigma_2}\leq 2$ is equivalent to $(r,s)=(1,1)$ or $\Sigma_1=\Sigma_2=2$. But if $\Sigma_1=\Sigma_2=2$, then $\graph{B_{\Lambda}}$ is regular and so by Proposition \ref{prop: classification in the regular case} we obtain that $\Sigma_1=\Sigma_2=2$ if and only if $(r,s)=(4,4)$, as required.
\end{proof}

Next, recall that a graph is called \emph{edge-transitive} if for any pair of edges $e_1$, $e_2$ in the graph there is an automorphism of the graph mapping $e_1$ to $e_2$.
All graphs appearing in Proposition \ref{prop: classification in the regular case} are edge-transitive. In fact, for all examples of quadratic monomial $2$-representation-finite algebras $\Lambda=\K S_{(r,s)}/\I$ that we know of, the graph $\graph{B_{\Lambda}}$ is edge-transitive. This motivates the classification given in the following proposition.

\begin{prop}\label{prop: classification in the edge-transitive case}
Assume that $\charac(\K)=0$. Let $\Lambda=\K S_{(r,s)}/\I$ be a $2$-representation-finite $(r,s)$-star algebra. If $B_{\Lambda}$ is edge-transitive, then the underlying graph $\graph{B_{\Lambda}}$ is one of graphs appearing in Proposition \ref{prop: classification in the regular case}(a)---(c) or: 
\begin{enumerate}
    \item[(d)] the graph G-9P730 in the Encyclopedia of Graphs or its bipartite complement;
    \item[(e)] the graph G-9P731 in the Encyclopedia of Graphs or its bipartite complement.
\end{enumerate}
\end{prop}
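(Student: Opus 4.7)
The plan is to reduce the edge-transitive case to the semi-regular case already handled in Corollary \ref{cor:2-RF semi-regular star algebra} and then enumerate the (finitely many) admissible parameters via the Diophantine system, finally matching them against known lists of edge-transitive bipartite graphs.

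First I would record the elementary graph-theoretic fact that any edge-transitive bipartite graph is semi-regular. Indeed, the automorphism group of $\graph{B_{\Lambda}}$ preserves the (unique) bipartition $\{x_1,\ldots,x_r\}\cup\{y_1,\ldots,y_s\}$ up to swapping the two parts, and since it acts transitively on edges, any two vertices in the same part have the same degree. Denoting these common degrees by $\Sigma_1$ and $\Sigma_2$, the graph $\graph{B_{\Lambda}}$ is semi-regular of bidegree $(\Sigma_1,\Sigma_2)$, so Corollary \ref{cor:2-RF semi-regular star algebra} applies. This gives the two Diophantine conditions
\[
r\Sigma_1=s\Sigma_2,\qquad \Sigma_1\Sigma_2+r+s-r\Sigma_1\in\{1,2,3,4\}.
\]
Together with Proposition \ref{prop:star algebras which are 2-hereditary}(b), which forces either $(r,s)=(1,1)$ (yielding case (a)) or $r,s\geq 4$ with $2\leq \Sigma_1\leq s-2$ and $2\leq\Sigma_2\leq r-2$, this Diophantine system has only finitely many solutions.

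Next I would invoke the classification of solutions to the above system carried out in Appendix \ref{Section:Solutions to the system of linear Diophantine equations}: this gives an explicit finite list of quadruples $(r,s,\Sigma_1,\Sigma_2)$. For each such quadruple I would then consult the literature on edge-transitive bipartite graphs (in particular the Encyclopedia of Graphs \cite{EoG}) to enumerate all edge-transitive bipartite graphs having the prescribed parts of size $r,s$ and bidegree $(\Sigma_1,\Sigma_2)$. Note that when $(\Sigma_1,\Sigma_2)=(2,2)$ the graph is a disjoint union of cycles, hence an (edge-transitive connected) $8$-cycle in the allowed range, giving case (b); the Heawood graph case (c) arises from $(r,s,\Sigma_1,\Sigma_2)=(7,7,3,3)$ where it is known to be the unique edge-transitive bipartite cubic graph of that size up to bipartite complementation; and the two extra families (d), (e) correspond to the remaining small admissible quadruples, matched against the database G-9P730, G-9P731.

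The main obstacle is the last matching step. For each quadruple in the Diophantine list one must determine which semi-regular bipartite graphs of that bidegree are edge-transitive, and then among those verify which ones actually arise as $\graph{B_{\Lambda}}$ for a $2$-representation-finite star algebra (i.e.\ which relations give a twisted fractionally Calabi--Yau algebra of global dimension $\leq 2$). Purely graph-theoretically this is a finite but delicate enumeration; representation-theoretically one needs to confirm twisted fractional Calabi--Yau property on the corresponding one-point extension $\K B_{\Lambda}[M]$ via Proposition \ref{prop:properties when one-point extension is fractionally Calabi-Yau} and the results of Section \ref{Section:One-point extensions of bipartite hereditary algebras}. I would organize this verification as a case analysis across the finite Diophantine list, which is exactly what Appendix \ref{Section:Classification in regular and edge-transitive case} is set up to do.
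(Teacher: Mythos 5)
Your proposal is correct and follows essentially the same route as the paper: reduce to the semi-regular case via the standard fact that a connected edge-transitive bipartite graph is regular or semi-regular, apply Corollary \ref{cor:2-RF semi-regular star algebra} to get the Diophantine system, enumerate its finitely many solutions (Appendix \ref{Section:Solutions to the system of linear Diophantine equations}), and then match each parameter quadruple against the Encyclopedia of Graphs and verify or exclude $2$-representation-finiteness case by case (Appendix \ref{Section:Classification in regular and edge-transitive case}). The only difference is that the paper makes the final verification concrete via explicit exclusion criteria (repeated neighbourhoods forcing $\Ext^1_{\Lambda}(D\Lambda,\Lambda)\neq 0$, infinite-order Coxeter matrices, $\Ext^1_{\Lambda}(\tau_2 D\Lambda,\Lambda)\neq 0$) and QPA checks of selfinjectivity of the preprojective algebra for the surviving graphs, which your plan defers to but does not carry out.
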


Note that, with the exception of case (a) in Proposition \ref{prop: classification in the regular case}, the graphs appearing in Proposition \ref{prop: classification in the edge-transitive case} all appear in pairs with their bipartite complements as the bipartite complement of $C_8$ is $C_8$. This fails in the case of Proposition \ref{prop: classification in the regular case}(a), essentially because in that case the algebra $\Lambda$ is not balanced.

Moreover, if $G$ is one of the graphs in Proposition \ref{prop: classification in the regular case} or Proposition \ref{prop: classification in the edge-transitive case}, then there exists a unique $2$-representation-finite $(r,s)$-star algebra $\Lambda$ with $\graph{B_{\Lambda}}=G$. The graphs and the associated $2$-representation-finite quadratic monomial algebras described in these classification are as follows.

\begin{enumerate}
    \item[(a)] If $\graph{B_{\Lambda}}$ is the bipartite complement of $P_2$, then $r=s=1$, $\Sigma_1=\Sigma_2=1$, and 
    \begin{equation}\label{eq:the A3 example}
    S_{(r,s)}=\begin{tikzpicture}[scale=0.8, transform shape, baseline={(current bounding box.center)}]
    
    \node (x1) at (-0.5,1) {$x_1$};             
    \node (z) at (2.5,1) {$z$};
    \node (y1) at (5.5, 1) {$y_1$};
    
    \node (u) at (0,0.4) { };
    \draw[->] (x1) to node[above] {$a_1$}  (z);
    \draw[->] (z) to node[above] {$b_1$}  (y1);
\end{tikzpicture}
    \quad
    B_{\Lambda}=\begin{tikzpicture}[scale=0.8, transform shape, baseline={(current bounding box.center)}]
    \node (u) at (0,-0.3) { };
    \node (y1) at (0,0) {$y_1$};
    \node (x1) at (2.5,0) {$x_1$.};
\end{tikzpicture}
    \end{equation}
    That is, in this case $\Lambda=\K S_{(1,1)}/(a_1 b_1)$.
    
    \item[(b)] If $\graph{B_{\Lambda}}=C_8$, then $r=s=4$, $\Sigma_1=\Sigma_2=2$, and
    \begin{equation}\label{eq:the C_8 example}
    S_{(r,s)} = \begin{tikzpicture}[scale=0.8, transform shape, baseline={(current bounding box.center)}]
    \node (x1) at (-0.5,4) {$x_1$};
    \node (x2) at (-0.5,2) {$x_2$};
    \node (x3) at (-0.5,0) {$x_3$};
    \node (x4) at (-0.5,-2) {$x_4$};    
    \node (y1) at (5.5,4) {$y_1$};  
    \node (y2) at (5.5,2) {$y_2$};
    \node (y3) at (5.5,0) {$y_3$};
    \node (y4) at (5.5,-2) {$y_4$};               
    \node (z) at (2.5,1) {$z$};

    \draw[->] (x1) to node[above] {$a_1$}  (z);
    \draw[->] (x2) to node[above] {$a_2$}  (z);
    \draw[->] (x3) to node[above] {$a_3$}  (z);
    \draw[->] (x4) to node[above] {$a_4$}  (z);
    \draw[->] (z) to node[above] {$b_1$}  (y1);
    \draw[->] (z) to node[above] {$b_2$}  (y2);
    \draw[->] (z) to node[above] {$b_3$}  (y3);
    \draw[->] (z) to node[above] {$b_4$}  (y4);
\end{tikzpicture}\quad
B_{\Lambda}=\begin{tikzpicture}[scale=0.8, transform shape, baseline={(current bounding box.center)}]

\def\n{8}

\def\radius{3}

    \node (x2) at ({360/\n * (1 - 1)}:\radius) {$x_2$.};
    \node (y2) at ({360/\n * (2 - 1)}:\radius) {$y_2$};
    \node (x1) at ({360/\n * (3 - 1)}:\radius) {$x_1$};
    \node (y1) at ({360/\n * (4 - 1)}:\radius) {$y_1$};
    \node (x4) at ({360/\n * (5 - 1)}:\radius) {$x_4$};
    \node (y4) at ({360/\n * (6 - 1)}:\radius) {$y_4$};
    \node (x3) at ({360/\n * (7 - 1)}:\radius) {$x_3$};
    \node (y3) at ({360/\n * (8 - 1)}:\radius) {$y_3$};

    \draw[->] (y2) to (x1);
    \draw[->] (y2) to (x2);
    \draw[->] (y3) to (x2);
    \draw[->] (y3) to (x3);
    \draw[->] (y4) to (x3);
    \draw[->] (y4) to (x4);
    \draw[->] (y1) to (x4);
    \draw[->] (y1) to (x1);
\end{tikzpicture}
\end{equation}
That is, in this case $\Lambda=\K S_{(4,4)}/(a_1 b_3, a_1 b_4, a_2 b_1, a_2 b_4, a_3 b_1, a_3 b_2, a_4 b_2, a_4 b_3)$.

\item[(c)] If $\graph{B_{\Lambda}}$ is the Heawood graph, then $r=s=7$, $\Sigma_1=\Sigma_2=3$, and
\begin{equation}\label{eq:the Heawood graph example}
    S_{(r,s)} = \begin{tikzpicture}[scale=0.8, transform shape, baseline={(current bounding box.center)}]
    \node (x1) at (-0.5,4) {$x_1$};
    \node (x2) at (-0.5,3) {$x_2$};
    \node (x3) at (-0.5,2) {$x_3$};
    \node (x4) at (-0.5,1) {$x_4$};
    \node (x5) at (-0.5,0) {$x_5$};
    \node (x6) at (-0.5,-1) {$x_6$};
    \node (x7) at (-0.5,-2) {$x_7$};    
    \node (y1) at (5.5,4) {$y_1$};  
    \node (y2) at (5.5,3) {$y_2$};
    \node (y3) at (5.5,2) {$y_3$};
    \node (y4) at (5.5,1) {$y_4$};    
    \node (y5) at (5.5,0) {$y_5$};  
    \node (y6) at (5.5,-1) {$y_6$};
    \node (y7) at (5.5,-2) {$y_7$};                   
    \node (z) at (2.5,1) {$z$};

    \draw[->] (x1) to node[near start, above] {$a_1$}  (z);
    \draw[->] (x2) to node[near start, above] {$a_2$}  (z);
    \draw[->] (x3) to node[near start, above] {$a_3$}  (z);
    \draw[->] (x4) to node[near start, above] {$a_4$}  (z);
    \draw[->] (x5) to node[near start, above] {$a_5$}  (z);
    \draw[->] (x6) to node[near start, above] {$a_6$}  (z);
    \draw[->] (x7) to node[near start, above] {$a_7$}  (z);
 
    \draw[->] (z) to node[near end, above] {$b_1$}  (y1);
    \draw[->] (z) to node[near end, above] {$b_2$}  (y2);
    \draw[->] (z) to node[near end, above] {$b_3$}  (y3);
    \draw[->] (z) to node[near end, above] {$b_4$}  (y4);    
    \draw[->] (z) to node[near end, above] {$b_5$}  (y5);
    \draw[->] (z) to node[near end, above] {$b_6$}  (y6);
    \draw[->] (z) to node[near end, above] {$b_7$}  (y7);
\end{tikzpicture}\quad
B_{\Lambda}=\begin{tikzpicture}[scale=0.8, transform shape, baseline={(current bounding box.center)}]

\def\n{14}

\def\radius{3}

    \node (x3) at ({360/\n * (1 - 1)}:\radius) {$x_3$.};
    \node (y2) at ({360/\n * (2 - 1)}:\radius) {$y_2$};
    \node (x2) at ({360/\n * (3 - 1)}:\radius) {$x_2$};
    \node (y1) at ({360/\n * (4 - 1)}:\radius) {$y_1$};
    \node (x1) at ({360/\n * (5 - 1)}:\radius) {$x_1$};
    \node (y7) at ({360/\n * (6 - 1)}:\radius) {$y_7$};
    \node (x7) at ({360/\n * (7 - 1)}:\radius) {$x_7$};
    \node (y6) at ({360/\n * (8 - 1)}:\radius) {$y_6$};
    \node (x6) at ({360/\n * (9 - 1)}:\radius) {$x_6$};
    \node (y5) at ({360/\n * (10 - 1)}:\radius) {$y_5$};
    \node (x5) at ({360/\n * (11 - 1)}:\radius) {$x_5$};
    \node (y4) at ({360/\n * (12 - 1)}:\radius) {$y_4$};
    \node (x4) at ({360/\n * (13 - 1)}:\radius) {$x_4$};
    \node (y3) at ({360/\n * (14 - 1)}:\radius) {$y_3$};

    \draw[->] (y1) to (x1);
    \draw[->] (y1) to (x2);
    \draw[->] (y1) to (x6);
    \draw[->] (y2) to (x2);
    \draw[->] (y2) to (x3);
    \draw[->] (y2) to (x7);
    \draw[->] (y3) to (x3);
    \draw[->] (y3) to (x4);
    \draw[->] (y3) to (x1);
    \draw[->] (y4) to (x4);
    \draw[->] (y4) to (x5);
    \draw[->] (y4) to (x2);
    \draw[->] (y5) to (x5);
    \draw[->] (y5) to (x6);
    \draw[->] (y5) to (x3);
    \draw[->] (y6) to (x6);
    \draw[->] (y6) to (x7);
    \draw[->] (y6) to (x4);
    \draw[->] (y7) to (x1);
    \draw[->] (y7) to (x7);
    \draw[->] (y7) to (x5);
\end{tikzpicture}
\end{equation}
That is, in this case $\Lambda=\K S_{(7,7)}/(a_1 b_3, a_1 b_4, a_1 b_5, a_1 b_7, a_2 b_1, a_2 b_4, a_2 b_5, a_2 b_6, a_3 b_2, a_3 b_5, a_3 b_6, \\ a_3 b_7, a_4 b_1, a_4 b_3, a_4 b_6, a_4 b_7, a_5 b_1, a_5 b_2, a_5 b_4, a_5 b_7, a_6 b_1, a_6 b_2, a_6 b_3, a_6 b_5, a_7 b_2, a_7 b_3, a_7 b_4, a_7 b_6)$.

\item[(c')] If $\graph{B_{\Lambda}}$ is the bipartite complement of the Heawood graph, then $r=s=7$, $\Sigma_1=\Sigma_2=4$, and
\begin{equation}\label{eq:the bipartite complement of the Heawood graph example}
    S_{(r,s)} = \begin{tikzpicture}[scale=0.8, transform shape, baseline={(current bounding box.center)}]
    \node (x1) at (-0.5,4) {$x_1$};
    \node (x2) at (-0.5,3) {$x_2$};
    \node (x3) at (-0.5,2) {$x_3$};
    \node (x4) at (-0.5,1) {$x_4$};
    \node (x5) at (-0.5,0) {$x_5$};
    \node (x6) at (-0.5,-1) {$x_6$};
    \node (x7) at (-0.5,-2) {$x_7$};
    \node (y1) at (5.5,4) {$y_1$};  
    \node (y2) at (5.5,3) {$y_2$};
    \node (y3) at (5.5,2) {$y_3$};
    \node (y4) at (5.5,1) {$y_4$};    
    \node (y5) at (5.5,0) {$y_5$};  
    \node (y6) at (5.5,-1) {$y_6$};
    \node (y7) at (5.5,-2) {$y_7$};
    \node (z) at (2.5,1) {$z$};

    \draw[->] (x1) to node[near start, above] {$a_1$}  (z);
    \draw[->] (x2) to node[near start, above] {$a_2$}  (z);
    \draw[->] (x3) to node[near start, above] {$a_3$}  (z);
    \draw[->] (x4) to node[near start, above] {$a_4$}  (z);
    \draw[->] (x5) to node[near start, above] {$a_5$}  (z);
    \draw[->] (x6) to node[near start, above] {$a_6$}  (z);
    \draw[->] (x7) to node[near start, above] {$a_7$}  (z);
    (z);    
    \draw[->] (z) to node[near end, above] {$b_1$}  (y1);
    \draw[->] (z) to node[near end, above] {$b_2$}  (y2);
    \draw[->] (z) to node[near end, above] {$b_3$}  (y3);
    \draw[->] (z) to node[near end, above] {$b_4$}  (y4);    
    \draw[->] (z) to node[near end, above] {$b_5$}  (y5);
    \draw[->] (z) to node[near end, above] {$b_6$}  (y6);
    \draw[->] (z) to node[near end, above] {$b_7$}  (y7);
\end{tikzpicture}\quad
B_{\Lambda}=\begin{tikzpicture}[scale=0.8, transform shape, baseline={(current bounding box.center)}]

\def\n{14}

\def\radius{3}

    \node (x3) at ({360/\n * (1 - 1)}:\radius) {$x_3$.};
    \node (y2) at ({360/\n * (2 - 1)}:\radius) {$y_2$};
    \node (x2) at ({360/\n * (3 - 1)}:\radius) {$x_2$};
    \node (y1) at ({360/\n * (4 - 1)}:\radius) {$y_1$};
    \node (x1) at ({360/\n * (5 - 1)}:\radius) {$x_1$};
    \node (y7) at ({360/\n * (6 - 1)}:\radius) {$y_7$};
    \node (x7) at ({360/\n * (7 - 1)}:\radius) {$x_7$};
    \node (y6) at ({360/\n * (8 - 1)}:\radius) {$y_6$};
    \node (x6) at ({360/\n * (9 - 1)}:\radius) {$x_6$};
    \node (y5) at ({360/\n * (10 - 1)}:\radius) {$y_5$};
    \node (x5) at ({360/\n * (11 - 1)}:\radius) {$x_5$};
    \node (y4) at ({360/\n * (12 - 1)}:\radius) {$y_4$};
    \node (x4) at ({360/\n * (13 - 1)}:\radius) {$x_4$};
    \node (y3) at ({360/\n * (14 - 1)}:\radius) {$y_3$};

    \draw[->] (y1) to (x3);
    \draw[->] (y1) to (x4);
    \draw[->] (y1) to (x5);
    \draw[->] (y1) to (x7);
    \draw[->] (y2) to (x1);
    \draw[->] (y2) to (x4);
    \draw[->] (y2) to (x5);
    \draw[->] (y2) to (x6);
    \draw[->] (y3) to (x2);
    \draw[->] (y3) to (x5);
    \draw[->] (y3) to (x6);
    \draw[->] (y3) to (x7);
    \draw[->] (y4) to (x1);
    \draw[->] (y4) to (x3);
    \draw[->] (y4) to (x6);
    \draw[->] (y4) to (x7);
    \draw[->] (y5) to (x1);
    \draw[->] (y5) to (x2);
    \draw[->] (y5) to (x4);
    \draw[->] (y5) to (x7);
    \draw[->] (y6) to (x1);
    \draw[->] (y6) to (x2);
    \draw[->] (y6) to (x3);
    \draw[->] (y6) to (x5);
    \draw[->] (y7) to (x2);
    \draw[->] (y7) to (x3);
    \draw[->] (y7) to (x4);
    \draw[->] (y7) to (x6);
\end{tikzpicture}
\end{equation}
That is, in this case $\Lambda=\K S_{(7,7)}/(a_1 b_1, a_1 b_2, a_1 b_6, a_2 b_2, a_2 b_3, a_2 b_7, a_3 b_1, a_3 b_3, a_3 b_4, a_4 b_2, a_4 b_4, \\ a_4 b_5, a_5 b_3, a_5 b_5, a_5 b_6, a_6 b_4, a_6 b_6, a_6 b_7, a_7 b_1, a_7 b_5, a_7 b_7)$.

\item[(d)] If $\graph{B_{\Lambda}}$ is the graph G-9P730, then $r=6$, $\Sigma_1=3$, $s=9$, $\Sigma_2=2$, and
\begin{equation}\label{eq:the graph G-9P730 example}
    S_{(r,s)} = \begin{tikzpicture}[scale=0.8, transform shape, baseline={(current bounding box.center)}]
    \node (x1) at (-0.5,4) {$x_1$};
    \node (x2) at (-0.5,2.4) {$x_2$};
    \node (x3) at (-0.5,0.8) {$x_3$};
    \node (x4) at (-0.5,-0.8) {$x_4$};
    \node (x5) at (-0.5,-2.4) {$x_5$};
    \node (x6) at (-0.5,-4) {$x_6$};    
    \node (y1) at (5.5,4) {$y_1$};  
    \node (y2) at (5.5,3) {$y_2$};
    \node (y3) at (5.5,2) {$y_3$};
    \node (y4) at (5.5,1) {$y_4$};    
    \node (y5) at (5.5,0) {$y_5$};  
    \node (y6) at (5.5,-1) {$y_6$};
    \node (y7) at (5.5,-2) {$y_7$};
    \node (y8) at (5.5,-3) {$y_8$};  
    \node (y9) at (5.5,-4) {$y_9$};                    
    \node (z) at (2.5,0) {$z$};

    \draw[->] (x1) to node[near start, above] {$a_1$}  (z);
    \draw[->] (x2) to node[near start, above] {$a_2$}  (z);
    \draw[->] (x3) to node[near start, above] {$a_3$}  (z);
    \draw[->] (x4) to node[near start, above] {$a_4$}  (z);
    \draw[->] (x5) to node[near start, above] {$a_5$}  (z);
    \draw[->] (x6) to node[near start, above] {$a_6$}  (z);
    
    \draw[->] (z) to node[near end, above] {$b_1$}  (y1);
    \draw[->] (z) to node[near end, above] {$b_2$}  (y2);
    \draw[->] (z) to node[near end, above] {$b_3$}  (y3);
    \draw[->] (z) to node[near end, above] {$b_4$}  (y4);    
    \draw[->] (z) to node[near end, above] {$b_5$}  (y5);
    \draw[->] (z) to node[near end, above] {$b_6$}  (y6);
    \draw[->] (z) to node[near end, above] {$b_7$}  (y7);
    \draw[->] (z) to node[near end, above] {$b_8$}  (y8);
    \draw[->] (z) to node[near end, above] {$b_9$}  (y9);
\end{tikzpicture}\quad
B_{\Lambda}=\begin{tikzpicture}[scale=0.8, transform shape, baseline={(current bounding box.center)}]

\def\n{12}
\def\nb{3}

\def\radius{4}
\def\radiusb{2}

    \node (y2) at ({360/\n * (1 - 1)}:\radius) {$y_2$.};
    \node (x2) at ({360/\n * (2 - 1)}:\radius) {$x_2$};
    \node (y1) at ({360/\n * (3 - 1)}:\radius) {$y_1$};
    \node (x1) at ({360/\n * (4 - 1)}:\radius) {$x_1$};
    \node (y6) at ({360/\n * (5 - 1)}:\radius) {$y_6$};
    \node (x6) at ({360/\n * (6 - 1)}:\radius) {$x_6$};
    \node (y5) at ({360/\n * (7 - 1)}:\radius) {$y_5$};
    \node (x5) at ({360/\n * (8 - 1)}:\radius) {$x_5$};
    \node (y4) at ({360/\n * (9 - 1)}:\radius) {$y_4$};
    \node (x4) at ({360/\n * (10 - 1)}:\radius) {$x_4$};
    \node (y3) at ({360/\n * (11 - 1)}:\radius) {$y_3$};
    \node (x3) at ({360/\n * (12 - 1)}:\radius) {$x_3$};
    \node (y7) at ({360/\nb * (1-1)}:\radiusb) {$y_7$};
    \node (y8) at ({360/\nb * (2-1)}:\radiusb) {$y_8$};
    \node (y9) at ({360/\nb * (3-1)}:\radiusb) {$y_9$};

    \draw[->] (y8) to (x4);
    \draw[->] (y8) to (x1);
    \draw[->] (y7) to (x3);
    \draw[->] (y7) to (x6);
    \draw[->] (y9) to (x2);
    \draw[->] (y9) to (x5);

    \draw[->] (y1) to (x1);
    \draw[->] (y1) to (x2);
    \draw[->] (y2) to (x2);
    \draw[->] (y2) to (x3);
    \draw[->] (y3) to (x3);
    \draw[->] (y3) to (x4);
    \draw[->] (y4) to (x4);
    \draw[->] (y4) to (x5);
    \draw[->] (y5) to (x5);
    \draw[->] (y5) to (x6);
    \draw[->] (y6) to (x1);
    \draw[->] (y6) to (x6);
\end{tikzpicture}
\end{equation}
That is, in this case $\Lambda=\K S_{(6,9)}/(a_1 b_2, a_1 b_3, a_1 b_4, a_1 b_5, a_1 b_7, a_1 b_9, a_2 b_3, a_2 b_4, a_2 b_5, a_2 b_6, \\ a_2 b_7, a_2 b_8, a_3 b_1, a_3 b_4, a_3 b_5, a_3 b_6, a_3 b_8, a_3 b_9, a_4 b_1, a_4 b_2, a_4 b_5, a_4 b_6, a_4 b_7, a_4 b_9, a_5 b_1, a_5 b_2, a_5 b_3, \\ a_5 b_6, a_5 b_7, a_5 b_8, a_6 b_1, a_6 b_2, a_6 b_3, a_6 b_4, a_6 b_8, a_6 b_9)$.

\item[(d')] If $\graph{B_{\Lambda}}$ is the bipartite complement of the graph G-9P730, then $r=6$, $\Sigma_1=6$, $s=9$, $\Sigma_2=4$, and
\begin{equation}\label{eq:the bipartite complement of the graph G-9P730 example}
    S_{(r,s)} = \begin{tikzpicture}[scale=0.8, transform shape, baseline={(current bounding box.center)}]
    \node (x1) at (-0.5,4) {$x_1$};
    \node (x2) at (-0.5,2.4) {$x_2$};
    \node (x3) at (-0.5,0.8) {$x_3$};
    \node (x4) at (-0.5,-0.8) {$x_4$};
    \node (x5) at (-0.5,-2.4) {$x_5$};
    \node (x6) at (-0.5,-4) {$x_6$};    
    \node (y1) at (5.5,4) {$y_1$};  
    \node (y2) at (5.5,3) {$y_2$};
    \node (y3) at (5.5,2) {$y_3$};
    \node (y4) at (5.5,1) {$y_4$};    
    \node (y5) at (5.5,0) {$y_5$};  
    \node (y6) at (5.5,-1) {$y_6$};
    \node (y7) at (5.5,-2) {$y_7$};
    \node (y8) at (5.5,-3) {$y_8$};  
    \node (y9) at (5.5,-4) {$y_9$};                    
    \node (z) at (2.5,0) {$z$};

    \draw[->] (x1) to node[near start, above] {$a_1$}  (z);
    \draw[->] (x2) to node[near start, above] {$a_2$}  (z);
    \draw[->] (x3) to node[near start, above] {$a_3$}  (z);
    \draw[->] (x4) to node[near start, above] {$a_4$}  (z);
    \draw[->] (x5) to node[near start, above] {$a_5$}  (z);
    \draw[->] (x6) to node[near start, above] {$a_6$}  (z);
    
    \draw[->] (z) to node[near end, above] {$b_1$}  (y1);
    \draw[->] (z) to node[near end, above] {$b_2$}  (y2);
    \draw[->] (z) to node[near end, above] {$b_3$}  (y3);
    \draw[->] (z) to node[near end, above] {$b_4$}  (y4);    
    \draw[->] (z) to node[near end, above] {$b_5$}  (y5);
    \draw[->] (z) to node[near end, above] {$b_6$}  (y6);
    \draw[->] (z) to node[near end, above] {$b_7$}  (y7);
    \draw[->] (z) to node[near end, above] {$b_8$}  (y8);
    \draw[->] (z) to node[near end, above] {$b_9$}  (y9);
\end{tikzpicture}\quad
B_{\Lambda}=\begin{tikzpicture}[scale=0.8, transform shape, baseline={(current bounding box.center)}]

\def\n{12}
\def\nb{3}

\def\radius{5}
\def\radiusb{3}

    \node (y2) at ({360/\n * (1 - 1)}:\radius) {$y_2$.};
    \node (x2) at ({360/\n * (2 - 1)}:\radius) {$x_2$};
    \node (y1) at ({360/\n * (3 - 1)}:\radius) {$y_1$};
    \node (x1) at ({360/\n * (4 - 1)}:\radius) {$x_1$};
    \node (y6) at ({360/\n * (5 - 1)}:\radius) {$y_6$};
    \node (x6) at ({360/\n * (6 - 1)}:\radius) {$x_6$};
    \node (y5) at ({360/\n * (7 - 1)}:\radius) {$y_5$};
    \node (x5) at ({360/\n * (8 - 1)}:\radius) {$x_5$};
    \node (y4) at ({360/\n * (9 - 1)}:\radius) {$y_4$};
    \node (x4) at ({360/\n * (10 - 1)}:\radius) {$x_4$};
    \node (y3) at ({360/\n * (11 - 1)}:\radius) {$y_3$};
    \node (x3) at ({360/\n * (12 - 1)}:\radius) {$x_3$};
    \node (y7) at ({360/\nb * (1-1)}:\radiusb) {$y_7$};
    \node (y8) at ({360/\nb * (2-1)}:\radiusb) {$y_8$};
    \node (y9) at ({360/\nb * (3-1)}:\radiusb) {$y_9$};

    \draw[->] (y8) to (x2);
    \draw[->] (y8) to (x3);
    \draw[->] (y8) to (x5);
    \draw[->] (y8) to (x6);
    \draw[->] (y7) to (x1);
    \draw[->] (y7) to (x2);
    \draw[->] (y7) to (x4);
    \draw[->] (y7) to (x5);
    \draw[->] (y9) to (x1);
    \draw[->] (y9) to (x3);
    \draw[->] (y9) to (x4);
    \draw[->] (y9) to (x6);
    \draw[->] (y1) to (x3);
    \draw[->] (y1) to (x4);
    \draw[->] (y1) to (x5);
    \draw[->] (y1) to (x6);
    \draw[->] (y2) to (x1);
    \draw[->] (y2) to (x4);
    \draw[->] (y2) to (x5);
    \draw[->] (y2) to (x6);
    \draw[->] (y3) to (x1);
    \draw[->] (y3) to (x2);
    \draw[->] (y3) to (x5);
    \draw[->] (y3) to (x6);
    \draw[->] (y4) to (x1);
    \draw[->] (y4) to (x2);
    \draw[->] (y4) to (x3);
    \draw[->] (y4) to (x6);
    \draw[->] (y5) to (x1);
    \draw[->] (y5) to (x2);
    \draw[->] (y5) to (x3);
    \draw[->] (y5) to (x4);
    \draw[->] (y6) to (x2);
    \draw[->] (y6) to (x3);
    \draw[->] (y6) to (x4);
    \draw[->] (y6) to (x5);
\end{tikzpicture}
\end{equation}
That is, in this case $\Lambda=\K S_{(6,9)}/(a_1 b_1, a_1 b_6, a_1 b_8, a_2 b_1, a_2 b_2, a_2 b_9, a_3 b_2, a_3 b_3, a_3 b_7, a_4 b_3, a_4 b_4, \\ a_4 b_8, a_5 b_4, a_5 b_5, a_5 b_9, a_6 b_5, a_6 b_6, a_6 b_7)$.

\item[(e)] If $\graph{B_{\Lambda}}$ is the graph G-9P731, then $r=5$, $\Sigma_1=4$, $s=10$, $\Sigma_2=2$, and
\begin{equation}\label{eq:the graph G-9P731 example}
    S_{(r,s)} = \begin{tikzpicture}[scale=0.8, transform shape, baseline={(current bounding box.center)}]
    \node (x1) at (-0.5,4) {$x_1$};
    \node (x2) at (-0.5,1.75) {$x_2$};
    \node (x3) at (-0.5,-0.5) {$x_3$};
    \node (x4) at (-0.5,-2.75) {$x_4$};
    \node (x5) at (-0.5,-5) {$x_5$};
    \node (y1) at (5.5,4) {$y_1$};  
    \node (y2) at (5.5,3) {$y_2$};
    \node (y3) at (5.5,2) {$y_3$};
    \node (y4) at (5.5,1) {$y_4$};    
    \node (y5) at (5.5,0) {$y_5$};  
    \node (y6) at (5.5,-1) {$y_6$};
    \node (y7) at (5.5,-2) {$y_7$};
    \node (y8) at (5.5,-3) {$y_8$};  
    \node (y9) at (5.5,-4) {$y_9$};  
    \node (y10) at (5.5,-5) {$y_{10}$};                    
    \node (z) at (2.5,-0.5) {$z$};

    \draw[->] (x1) to node[near start, above] {$a_1$}  (z);
    \draw[->] (x2) to node[near start, above] {$a_2$}  (z);
    \draw[->] (x3) to node[near start, above] {$a_3$}  (z);
    \draw[->] (x4) to node[near start, above] {$a_4$}  (z);
    \draw[->] (x5) to node[near start, above] {$a_5$}  (z);
    
    \draw[->] (z) to node[near end, above] {$b_1$}  (y1);
    \draw[->] (z) to node[near end, above] {$b_2$}  (y2);
    \draw[->] (z) to node[near end, above] {$b_3$}  (y3);
    \draw[->] (z) to node[near end, above] {$b_4$}  (y4);    
    \draw[->] (z) to node[near end, above] {$b_5$}  (y5);
    \draw[->] (z) to node[near end, above] {$b_6$}  (y6);
    \draw[->] (z) to node[near end, above] {$b_7$}  (y7);
    \draw[->] (z) to node[near end, above] {$b_8$}  (y8);
    \draw[->] (z) to node[near end, above] {$b_9$}  (y9);
    \draw[->] (z) to node[near end, above] {$b_{10}$}  (y10);
\end{tikzpicture}\quad
B_{\Lambda}=\begin{tikzpicture}[scale=0.8, transform shape, baseline={(current bounding box.center)}]

\def\n{10}
\def\nb{5}

\def\radius{4}
\def\radiusb{2}

    \node (y2) at ({360/\n * (1 - 1)}:\radius) {$y_2$.};
    \node (x2) at ({360/\n * (2 - 1)}:\radius) {$x_2$};
    \node (y1) at ({360/\n * (3 - 1)}:\radius) {$y_1$};
    \node (x1) at ({360/\n * (4 - 1)}:\radius) {$x_1$};
    \node (y5) at ({360/\n * (5 - 1)}:\radius) {$y_5$};
    \node (x5) at ({360/\n * (6 - 1)}:\radius) {$x_5$};
    \node (y4) at ({360/\n * (7 - 1)}:\radius) {$y_4$};
    \node (x4) at ({360/\n * (8 - 1)}:\radius) {$x_4$};
    \node (y3) at ({360/\n * (9 - 1)}:\radius) {$y_3$};
    \node (x3) at ({360/\n * (10 - 1)}:\radius) {$x_3$};
    
    \node (y6) at ({360/\nb * (1-1)}:\radiusb) {$y_6$};
    \node (y7) at ({360/\nb * (2-1)}:\radiusb) {$y_7$};
    \node (y8) at ({360/\nb * (3-1)}:\radiusb) {$y_8$};
    \node (y9) at ({360/\nb * (4-1)}:\radiusb) {$y_9$};
    \node (y10) at ({360/\nb * (5-1)}:\radiusb) {$y_{10}$};

    \draw[->] (y6) to (x1);
    \draw[->] (y6) to (x3);
    \draw[->] (y7) to (x2);
    \draw[->] (y7) to (x4);
    \draw[->] (y8) to (x3);
    \draw[->] (y8) to (x5);
    \draw[->] (y9) to (x4);
    \draw[->] (y9) to (x1);
    \draw[->] (y10) to (x5);
    \draw[->] (y10) to (x2);

    \draw[->] (y1) to (x1);
    \draw[->] (y1) to (x2);
    \draw[->] (y2) to (x2);
    \draw[->] (y2) to (x3);
    \draw[->] (y3) to (x3);
    \draw[->] (y3) to (x4);
    \draw[->] (y4) to (x4);
    \draw[->] (y4) to (x5);
    \draw[->] (y5) to (x5);
    \draw[->] (y5) to (x1);
\end{tikzpicture}
\end{equation}
That is, in this case $\Lambda=\K S_{(6,9)}/(a_1 b_2, a_1 b_3, a_1 b_4, a_1 b_7, a_1 b_8, a_1 b_{10}, a_2 b_3, a_2 b_4, a_2 b_5, a_2 b_6, \\ a_2 b_8, a_2 b_9, a_3 b_1, a_3 b_4, a_3 b_5, a_3 b_7, a_3 b_9, a_3 b_{10}, a_4 b_1, a_4 b_2, a_4 b_5, a_4 b_6, a_4 b_8, a_4 b_{10}, a_5 b_1, a_5 b_2,\\ a_5 b_3, a_5 b_6, a_5 b_7, a_5 b_9)$.

\item[(e')] If $\graph{B_{\Lambda}}$ is the bipartite complement of the graph G-9P731, then $r=5$, $\Sigma_1=6$, $s=10$, $\Sigma_2=3$, and
\begin{equation}\label{eq:the bipartite complement of the graph G-9P731 example}
    S_{(r,s)} = \begin{tikzpicture}[scale=0.8, transform shape, baseline={(current bounding box.center)}]
    \node (x1) at (-0.5,4) {$x_1$};
    \node (x2) at (-0.5,1.75) {$x_2$};
    \node (x3) at (-0.5,-0.5) {$x_3$};
    \node (x4) at (-0.5,-2.75) {$x_4$};
    \node (x5) at (-0.5,-5) {$x_5$};
    \node (y1) at (5.5,4) {$y_1$};  
    \node (y2) at (5.5,3) {$y_2$};
    \node (y3) at (5.5,2) {$y_3$};
    \node (y4) at (5.5,1) {$y_4$};    
    \node (y5) at (5.5,0) {$y_5$};  
    \node (y6) at (5.5,-1) {$y_6$};
    \node (y7) at (5.5,-2) {$y_7$};
    \node (y8) at (5.5,-3) {$y_8$};  
    \node (y9) at (5.5,-4) {$y_9$};  
    \node (y10) at (5.5,-5) {$y_{10}$};                    
    \node (z) at (2.5,-0.5) {$z$};

    \draw[->] (x1) to node[near start, above] {$a_1$}  (z);
    \draw[->] (x2) to node[near start, above] {$a_2$}  (z);
    \draw[->] (x3) to node[near start, above] {$a_3$}  (z);
    \draw[->] (x4) to node[near start, above] {$a_4$}  (z);
    \draw[->] (x5) to node[near start, above] {$a_5$}  (z);
    
    \draw[->] (z) to node[near end, above] {$b_1$}  (y1);
    \draw[->] (z) to node[near end, above] {$b_2$}  (y2);
    \draw[->] (z) to node[near end, above] {$b_3$}  (y3);
    \draw[->] (z) to node[near end, above] {$b_4$}  (y4);    
    \draw[->] (z) to node[near end, above] {$b_5$}  (y5);
    \draw[->] (z) to node[near end, above] {$b_6$}  (y6);
    \draw[->] (z) to node[near end, above] {$b_7$}  (y7);
    \draw[->] (z) to node[near end, above] {$b_8$}  (y8);
    \draw[->] (z) to node[near end, above] {$b_9$}  (y9);
    \draw[->] (z) to node[near end, above] {$b_{10}$}  (y10);
\end{tikzpicture}\quad
B_{\Lambda}=\begin{tikzpicture}[scale=0.8, transform shape, baseline={(current bounding box.center)}]

\def\n{10}
\def\nb{5}

\def\radius{4}
\def\radiusb{2}

    \node (y2) at ({360/\n * (1 - 1)}:\radius) {$y_2$.};
    \node (x2) at ({360/\n * (2 - 1)}:\radius) {$x_2$};
    \node (y1) at ({360/\n * (3 - 1)}:\radius) {$y_1$};
    \node (x1) at ({360/\n * (4 - 1)}:\radius) {$x_1$};
    \node (y5) at ({360/\n * (5 - 1)}:\radius) {$y_5$};
    \node (x5) at ({360/\n * (6 - 1)}:\radius) {$x_5$};
    \node (y4) at ({360/\n * (7 - 1)}:\radius) {$y_4$};
    \node (x4) at ({360/\n * (8 - 1)}:\radius) {$x_4$};
    \node (y3) at ({360/\n * (9 - 1)}:\radius) {$y_3$};
    \node (x3) at ({360/\n * (10 - 1)}:\radius) {$x_3$};
    
    \node (y6) at ({360/\nb * (1-1)}:\radiusb) {$y_6$};
    \node (y7) at ({360/\nb * (2-1)}:\radiusb) {$y_7$};
    \node (y8) at ({360/\nb * (3-1)}:\radiusb) {$y_8$};
    \node (y9) at ({360/\nb * (4-1)}:\radiusb) {$y_9$};
    \node (y10) at ({360/\nb * (5-1)}:\radiusb) {$y_{10}$};

    \draw[->] (y1) to (x3);
    \draw[->] (y1) to (x4);
    \draw[->] (y1) to (x5);
    \draw[->] (y2) to (x1);
    \draw[->] (y2) to (x4);
    \draw[->] (y2) to (x5);
    \draw[->] (y3) to (x1);
    \draw[->] (y3) to (x2);
    \draw[->] (y3) to (x5);
    \draw[->] (y4) to (x1);
    \draw[->] (y4) to (x2);
    \draw[->] (y4) to (x3);
    \draw[->] (y5) to (x2);
    \draw[->] (y5) to (x3);
    \draw[->] (y5) to (x4);
    \draw[->] (y6) to (x2);
    \draw[->] (y6) to (x4);
    \draw[->] (y6) to (x5);
    \draw[->] (y7) to (x1);
    \draw[->] (y7) to (x3);
    \draw[->] (y7) to (x5);
    \draw[->] (y8) to (x1);
    \draw[->] (y8) to (x2);
    \draw[->] (y8) to (x4);
    \draw[->] (y9) to (x2);
    \draw[->] (y9) to (x3);
    \draw[->] (y9) to (x5);
    \draw[->] (y10) to (x1);
    \draw[->] (y10) to (x3);
    \draw[->] (y10) to (x4);
\end{tikzpicture}
\end{equation}
That is, in this case $\Lambda=\K S_{(6,9)}/(a_1 b_1, a_1 b_5, a_1 b_6, a_1 b_9, a_2 b_1, a_2 b_2, a_2 b_7, a_2 b_{10}, a_3 b_2, \\ a_3 b_3, a_3 b_6, a_3 b_8, a_4 b_3, a_4 b_4, a_4 b_7, a_4 b_9, a_5 b_4, a_5 b_5, a_5 b_8, a_5 b_{10})$.
\end{enumerate}

In the following example, we indicate one way of obtaining the graph G-9P731, which has connections to incidence geometry.

\begin{example}
The graph G-9P731 corresponding to the solution $(5,4,10,2)$ can be constructed by taking its set of vertices to be $V = X \cup Y$ with $X$ the set $\{1,2,3,4,5\}$ and as $Y$ the set of two element subsets of $X$. One then lets there be an edge between $x \in X$ and $y \in Y$ if and only if $x \in y$. 
\end{example}

\appendix

\section{Solutions to the system of linear Diophantine equations}
\label{Section:Solutions to the system of linear Diophantine equations}
The aim of this appendix is to find integers $\Sigma_1$, $\Sigma_2$, $\lvert d_x\rvert^2$ and $\lvert d_y\rvert^2$ that satisfy the conditions of Proposition \ref{prop:properties when one-point extension is fractionally Calabi-Yau}. The results in this appendix are based on the answer of Max Alekseyev in a Mathoverflow post \cite{Ale2022}.

To simplify the notation, we consider the system of equations 
\begin{align}
  kl + a + b - ak &= p  \tag{9.1} \label{eq:1}\\
    ak &= bl \tag{9.2}\label{eq:2}
\end{align} 
where $ p\in\{0,1,2,3,4\}$ and $a,b,k,l\in\ZZ_{>0}$. Denote  
\[
\mathbf{S} \coloneqq \{(p,a,k,b,l)\in\ZZ^{5}_{\geq 1} \mid (p,a,k,b,l) \text{ satisfy (\ref{eq:1}) and (\ref{eq:2})}\}.
\]
Then an element $(p,a,k,b,l)\in\mathbf{S}$ gives integers satisfying Proposition \ref{prop:properties when one-point extension is fractionally Calabi-Yau} by setting $\Sigma_1\coloneqq k$, $\Sigma_2\coloneqq l$, $\lvert d_x\rvert^2\coloneqq a$, $\lvert d_y\rvert^2\coloneqq b$.

If $(p,x,y,z,w)\in\ZZ^5$, then we define $(p,x,y,z,w)^{\ast}\coloneqq (p,z,w,x,y)$. In particular, we obtain that $(p,a,k,b,l)\in\mathbf{S}$ if and only if $(p,a,k,b,l)^{\ast}\in\mathbf{S}$. We now proceed with finding all elements of $\mathbf{S}$.

First, using (\ref{eq:2}), there exist integers $u,v,w,t\in\ZZ_{>0}$ such that
\begin{equation}\tag{10} \label{eq:change of variables}
a = uv,\;\; k=wt,\;\; b=uw,\;\; l=vt.
\end{equation}
For example, we may set $u=\gcd(a,b)$, so that $a=uv$ and $b=uw$ and that $v$ divides $l$. Then (\ref{eq:1}) becomes
\begin{equation}
    vwt^2+uv+uw-uvwt=p \tag{9.3} \label{eq:3}
\end{equation}
We consider the cases $u\leq t$ and $u>t$ separately.

Case $u\leq t$. In this case (\ref{eq:3}) gives
\[
uv+uw = p+uvwt-vwt^2 \leq p+vwt^2-vwt^2=p.
\]
Hence 
\begin{equation}
    uv +vw \leq p \tag{9.4} \label{eq:4}
\end{equation}
In particular, since 
\[
2=1+1 \leq uv+vw\leq p\leq 4,
\]
we have that $1\leq u,v,w\leq 3$. Hence there are finitely many solutions of the system (\ref{eq:3}) and (\ref{eq:4}), and a check of the different possibilities gives that the only solutions in this case are:
\[
\begin{array}{llllll}
    p=2,& u=1,& v=1,& w=1,& t=1, \\
    p=3,& u=1,& v=1,& w=2,& t=1, \\
    p=3,& u=1,& v=2,& w=1,& t=1, \\
    p=4,& u=1,& v=1,& w=1,& t=2, \\
    p=4,& u=1,& v=1,& w=3,& t=1, \\
    p=4,& u=1,& v=3,& w=1,& t=1, \\
    p=4,& u=1,& v=2,& w=2,& t=1, \\
    p=4,& u=2,& v=1,& w=1,& t=2.
\end{array}
\]
Using (\ref{eq:change of variables}), we obtain the following solutions of the system (\ref{eq:1})--(\ref{eq:2}):
\[
\begin{array}{llllll}
    p=2,& a=1,& k=1,& b=1,& l=1, \\
    p=3,& a=1,& k=2,& b=2,& l=1, \\
    p=3,& a=2,& k=1,& b=1,& l=2, \\
    p=4,& a=1,& k=2,& b=1,& l=2, \\
    p=4,& a=1,& k=3,& b=3,& l=1, \\
    p=4,& a=3,& k=1,& b=1,& l=3, \\
    p=4,& a=2,& k=2,& b=2,& l=2. \\
\end{array}
\]
Hence by setting
\[
S_1 \coloneqq \{(2,1,1,1,1),(3,1,2,2,1),(4,1,2,1,2),(4,1,3,3,1),(4,2,2,2,2)\}
\]
we obtain $S_1\cup S_1^{\ast}\subseteq \mathbf{S}$.

Case $u>t$. Set $d\coloneqq u-t>0$. Replacing $u=d+t$ in (\ref{eq:3}) we obtain
\[
vwt^2 + (d+t)v + (d+t)w - (d+t)vwt = p.
\]
We then have
\begin{align*}
    vwt^2 + (d+t)v + (d+t)w - (d+t)vwt = p &\iff vwt^2 + dv + tv + dw + tw - dvwt - vwt^2 = p \\
    &\iff dv + tv + dw + tw - dvwt = p \\
    &\iff 2dvwt - 2dv -2tv -2dw -2tw + 8 = 8-2p,     
\end{align*}
from which we equivalently obtain
\begin{equation}
    (tv-2)(dw-2) + (dv-2)(tw-2) = 8-2p. \tag{9.5} \label{eq:5}
\end{equation}
In particular, since $0\leq p\leq 4$, we have that $8-2p\in\{0,2,4,6,8\}$. We consider two subcases.

Subcase $tv\geq 3$ and $dw\geq 3$ and $dv\geq 3$ and $tw\geq 3$. Then $(dv-2)(tw-2)\geq 1$ and so (\ref{eq:5}) gives
\[
(tv-2)(dw-2) \leq 8-2p-1 \leq 7.
\]
Hence $tv-2\leq 7$ and $dw-2\leq 7$, which gives $1\leq t,v,d,w\leq 9$. Hence in this case the equation (\ref{eq:5}) has finitely many solutions, which we can obtain by checking the different possibilities:
\[
\begin{array}{lllll}
    p=0,& d=1,& v=3,& w=3,& t=2, \\
    p=0,& d=1,& v=3,& w=6,& t=1, \\
    p=0,& d=1,& v=4,& w=4,& t=1, \\
    p=0,& d=1,& v=6,& w=3,& t=1, \\
    p=0,& d=2,& v=2,& w=2,& t=2, \\
    p=0,& d=2,& v=3,& w=3,& t=1, \\
    p=0,& d=3,& v=1,& w=1,& t=6, \\
    p=0,& d=3,& v=1,& w=2,& t=3, \\
    p=0,& d=3,& v=2,& w=1,& t=3, \\
    p=0,& d=4,& v=1,& w=1,& t=4, \\
    p=0,& d=6,& v=1,& w=1,& t=3, \\
    p=1,& d=1,& v=3,& w=5,& t=1, \\
    p=1,& d=1,& v=5,& w=3,& t=1, \\
    p=1,& d=3,& v=1,& w=1,& t=5, \\
    p=1,& d=5,& v=1,& w=1,& t=3, \\
    p=2,& d=1,& v=3,& w=4,& t=1, \\
    p=2,& d=1,& v=4,& w=3,& t=1, \\
    p=2,& d=3,& v=1,& w=1,& t=4, \\
    p=2,& d=4,& v=1,& w=1,& t=3, \\
    p=3,& d=1,& v=3,& w=3,& t=1, \\
    p=3,& d=3,& v=1,& w=1,& t=3.
\end{array}
\]
Using $d=u-t$ and (\ref{eq:change of variables}), we obtain the following solutions of the system (\ref{eq:1})--(\ref{eq:2}):
\[
\begin{array}{lllll}
    p=0,& a=6,& k=6,& b=12,& l=3, \\
    p=0,& a=8,& k=4,& b=8,& l=4, \\
    p=0,& a=9,& k=3,& b=9,& l=3, \\
    p=0,& a=9,& k=6,& b=9,& l=6, \\
    p=0,& a=12,& k=3,& b=6,& l=6, \\
    p=1,& a=6,& k=5,& b=10,& l=3, \\
    p=1,& a=8,& k=3,& b=8,& l=3, \\
    p=1,& a=8,& k=5,& b=8,& l=5, \\
    p=1,& a=10,& k=3,& b=6,& l=5, \\
    p=2,& a=6,& k=4,& b=8,& l=3, \\
    p=2,& a=7,& k=3,& b=7,& l=3, \\
    p=2,& a=7,& k=4,& b=7,& l=4, \\
    p=2,& a=8,& k=3,& b=6,& l=4, \\
    p=3,& a=6,& k=3,& b=6,& l=3. 
\end{array}
\]
Hence by setting
\begin{align*}
S_2 \coloneqq \{&(0,6,6,12,3),(0,8,4,8,4),(0,9,3,9,3),(0,9,6,9,6),(0,12,3,6,6),(1,6,5,10,3), \\
&(1,8,3,8,3),(1,8,5,8,5),(2,6,4,8,3),(2,7,3,7,3),(2,7,4,7,4),(3,6,3,6,3)\}
\end{align*}
we obtain $S_2\cup S_2^{\ast}\subseteq \mathbf{S}$.

Subcase $tv\leq 2$ or $dw\leq 2$ or $dv\leq 2$ or $tw\leq 2$. Let us consider the case $tv\leq 2$ first. Then either $(t,v)=(1,1)$ or $(t,v)=(1,2)$ or $(t,v)=(2,1)$. Hence (\ref{eq:5}) gives
\begin{align*}
    &(t,v)=(1,1)\;\; \text{ and } \;\;(1-2)(dw-2)+(d-2)(w-2) = 8-2p,\;\; \text{ or } \\
    &(t,v)=(1,2)\;\; \text{ and } \;\;(2d-2)(w-2) = 8-2p,\;\; \text{ or } \\
    &(t,v)=(2,1)\;\; \text{ and } \;\;(d-2)(2w-2) = 8-2p.
\end{align*}
Equivalently, we obtain
\begin{align}
    &(t,v)=(1,1)\;\; \text{ and } \;\;d+w = p-1,\;\; \text{ or } \tag{9.6}\label{eq:tv=11} \\
    &(t,v)=(1,2)\;\; \text{ and } \;\;(d-1)(w-2) = 4-p,\;\; \text{ or } \tag{9.7}\label{eq:tv=12} \\
    &(t,v)=(2,1)\;\; \text{ and } \;\;(d-2)(w-1) = 4-p. \tag{9.8}\label{eq:tv=21}
\end{align}
In (\ref{eq:tv=11}) we have $d+w=p-1\leq 3$, and so $1\leq d,w\leq 2$. Hence we obtain the solutions
\[
\begin{array}{lllll}
    p=3,& d=1,& v=1,& w=1,& t=1, \\
    p=4,& d=1,& v=1,& w=2,& t=1, \\
    p=4,& d=2,& v=1,& w=1,& t=1.
\end{array}
\]
In (\ref{eq:tv=12}), if $p<4$, then $(d-1)(w-2)\in\{1,2,3,4\}$. Hence we obtain the solutions
\[
\begin{array}{lllll}
    p=0,& d=2,& v=2,& w=6,& t=1, \\
    p=0,& d=3,& v=2,& w=4,& t=1, \\
    p=0,& d=5,& v=2,& w=3,& t=1,
    \\
    p=1,& d=2,& v=2,& w=5,& t=1, \\
    p=1,& d=4,& v=2,& w=3,& t=1, \\
    p=2,& d=2,& v=2,& w=4,& t=1, \\
    p=2,& d=3,& v=2,& w=3,& t=1, \\
    p=3,& d=2,& v=2,& w=3,& t=1.
\end{array}
\]
In (\ref{eq:tv=12}), if $p=4$, then $(d-1)(w-2)=0$. Hence we obtain the solutions
\[
\begin{array}{lllll}
    p=4,& d=1,& v=2,& w\geq 1,& t=1, \\
    p=4,& d\geq 1,& v=2,& w=2,& t=1.
\end{array}
\]
Finally, (\ref{eq:tv=21}) is symmetric to (\ref{eq:tv=12}). Hence in this case we obtain the solutions
\[
\begin{array}{llllll}
    p=0, &d=3, &v=1, &w=5, &t=2, \\
    p=0, &d=4, &v=1, &w=3, &t=2, \\
    p=0, &d=6, &v=1, &w=2, &t=2, \\
    p=1, &d=3, &v=1, &w=4, &t=2, \\
    p=1, &d=5, &v=1, &w=2, &t=2, \\
    p=2, &d=3, &v=1, &w=3, &t=2, \\
    p=2, &d=4, &v=1, &w=2, &t=2, \\
    p=3, &d=3, &v=1, &w=2, &t=2, \\
    p=4, &d=2, &v=1, &w\geq 1, &t=2, \\
    p=4, &d\geq 1, &v=1, &w=1, &t=2.
\end{array}
\]
Now using $d=u-t$ and (\ref{eq:change of variables}), we may translate all of the solutions in the case $tv\leq 2$ to the following solutions of the system (\ref{eq:1})--(\ref{eq:2}):
\[
\begin{array}{lllll}
    p=0,& a=5,& k=10,& b=25,& l=2, \\
    p=0,& a=6,& k=6,& b=18,& l=2, \\
    p=0,& a=8,& k=4,& b=16,& l=2, \\
    p=0,& a=12,& k=3,& b=18,& l=2, \\
    p=1,& a=5,& k=8,& b=20,& l=2, \\
    p=1,& a=6,& k=5,& b=15,& l=2, \\
    p=1,& a=7,& k=4,& b=14,& l=2, \\
    p=1,& a=10,& k=3,& b=15,& l=2, \\
    p=2,& a=5,& k=6,& b=15,& l=2, \\
    p=2,& a=6,& k=4,& b=12,& l=2, \\
    p=2,& a=8,& k=3,& b=12,& l=2, \\
    p=3,& a=2,& k=1,& b=2,& l=1, \\
    p=3,& a=5,& k=4,& b=10,& l=2, \\
    p=3,& a=6,& k=3,& b=9,& l=2, \\    
    p=4,& a=2,& k=2,& b=4,& l=1, \\
    p=4,& a=3,& k=1,& b=3,& l=1, \\
    p=4,& a=4,& k=w,& b=2w,& l=2, \\
    p=4,& a=d+2,& k=2,& b=d+2,& l=2.
\end{array}
\]
Hence by setting
\begin{align*}
    S_3\coloneqq \{&(0,5,10,25,2),(0,6,6,18,2),(0,8,4,16,2),(0,12,3,18,2),(1,5,8,20,2),(1,6,5,15,2),\\
    &(1,7,4,14,2),(1,10,3,15,2),(2,5,6,15,2),(2,6,4,12,2),(2,8,3,12,2),(3,2,1,2,1),\\
    &(3,5,4,10,2),(3,6,3,9,2),(4,2,2,4,1),(4,3,1,3,1),(4,4,x,2x,2)_{x\geq 1},(4,x+2,2,x+2,2)_{x\geq 1}\}
\end{align*}
we obtain $S_3\cup S_3^{\ast}\subseteq \mathbf{S}$. 

Next notice that by symmetry, the solutions in the case $tv\leq 2$ are given by the set $S_3^{\ast}$. 

Let us now consider the case $dw\leq 2$. Then either $(d,w)=(1,1)$ or $(d,w)=(1,2)$ or $(d,w)=(2,1)$.  Hence (\ref{eq:5}) gives
\begin{align*}
    &(d,w)=(1,1)\;\; \text{ and } \;\;(tv-2)(1-2)+(v-2)(t-2) = 8-2p,\;\; \text{ or } \\
    &(d,w)=(1,2)\;\; \text{ and } \;\;(v-2)(2t-2) = 8-2p,\;\; \text{ or } \\
    &(d,w)=(2,1)\;\; \text{ and } \;\;(2v-2)(t-2) = 8-2p.
\end{align*}
Equivalently, we obtain
\begin{align}
    &(d,w)=(1,1)\;\; \text{ and } \;\;v+t = p-1,\;\; \text{ or } \tag{9.9}\label{eq:dw=11} \\
    &(d,w)=(1,2)\;\; \text{ and } \;\;(v-2)(t-1) = 4-p,\;\; \text{ or } \tag{9.10}\label{eq:dw=12} \\
    &(d,w)=(2,1)\;\; \text{ and } \;\;(v-1)(t-2) = 4-p. \tag{9.11}\label{eq:dw=21}
\end{align}
In (\ref{eq:dw=11}) we have $v+t=p-1\leq 3$ and so $1\leq v,t\leq 2$. Hence we obtain the solutions
\[
\begin{array}{lllll}
    p=3,& d=1,& v=1,& w=1,& t=1, \\
    p=4,& d=1,& v=1,& w=1,& t=2, \\
    p=4,& d=1,& v=2,& w=1,& t=1.
\end{array}
\]
In (\ref{eq:dw=12}), if $p<4$, then $(v-2)(t-1)\in\{1,2,3,4\}$. Hence we obtain the solutions
\[
\begin{array}{lllll}
    p=0,& d=1,& v=3,& w=2,& t=5, \\
    p=0,& d=1,& v=4,& w=2,& t=3, \\
    p=0,& d=1,& v=6,& w=2,& t=2,
    \\
    p=1,& d=1,& v=3,& w=2,& t=4, \\
    p=1,& d=1,& v=5,& w=2,& t=2, \\
    p=2,& d=1,& v=3,& w=2,& t=3, \\
    p=2,& d=1,& v=4,& w=2,& t=2, \\
    p=3,& d=1,& v=3,& w=2,& t=2.
\end{array}
\]
In (\ref{eq:dw=12}), if $p=4$, then $(v-2)(t-1)=0$. Hence we obtain the solutions
\[
\begin{array}{lllll}
    p=4,& d=1,& v=2,& w=2,& t\geq 1, \\
    p=4,& d=1,& v\geq 1,& w=2,& t=1.
\end{array}
\]
Finally, (\ref{eq:dw=21}) is symmetric to (\ref{eq:dw=12}). Hence in this case we obtain the solutions
\[
\begin{array}{lllll}
    p=0,& d=2,& v=2,& w=1,& t=6,
    \\
    p=0,& d=2,& v=3,& w=1,& t=4, \\
    p=0,& d=2,& v=5,& w=1,& t=3, \\
    p=1,& d=2,& v=2,& w=1,& t=5, \\    
    p=1,& d=2,& v=4,& w=1,& t=3, \\
    p=2,& d=2,& v=2,& w=1,& t=4, \\    
    p=2,& d=2,& v=3,& w=1,& t=3, \\
    p=3,& d=2,& v=2,& w=1,& t=3, \\
    p=4,& d=2,& v\geq 1,& w=1,& t=2, \\
    p=4,& d=2,& v=1,& w=1,& t\geq 1.
\end{array}
\]
Now using $d=u-t$ and (\ref{eq:change of variables}), we may translate all of the solutions in the case $dw\leq 2$ to the following solutions of the system (\ref{eq:1})--(\ref{eq:2}):
\[
\begin{array}{lllll}
    p=0,& a=16,& k=6,& b=8,& l=12, \\
    p=0,& a=18,& k=4,& b=6,& l=12, \\
    p=0,& a=18,& k=10,& b=12,& l=15, \\
    p=0,& a=25,& k=3,& b=5,& l=15, \\
    p=1,& a=14,& k=5,& b=7,& l=10, \\
    p=1,& a=15,& k=4,& b=6,& l=10, \\
    p=1,& a=15,& k=8,& b=10,& l=12, \\
    p=1,& a=20,& k=3,& b=5,& l=12, \\
    p=2,& a=12,& k=4,& b=6,& l=8, \\
    p=2,& a=12,& k=6,& b=8,& l=9, \\
    p=2,& a=15,& k=3,& b=5,& l=9, \\
    p=3,& a=2,& k=1,& b=2,& l=1, \\
    p=3,& a=9,& k=4,& b=6,& l=6, \\
    p=3,& a=10,& k=3,& b=5,& l=6, \\    
    p=4,& a=3,& k=2,& b=3,& l=2, \\
    p=4,& a=4,& k=1,& b=2,& l=2, \\
    p=4,& a=t+2,& k=t,& b=t+2,& l=t, \\
    p=4,& a=2v,& k=2,& b=4,& l=v.
\end{array}
\]
Hence by setting
\begin{align*}
    S_4\coloneqq \{&(0,16,6,8,12), (0,18,4,6,12), (0,18,10,12,15), (0,25,3,5,15), (1,14,5,7,10),(1,15,4,6,10),\\
    &(1,15,8,10,12), (1,20,3,5,12), (2,12,4,6,8), (2,12,6,8,9), (2,15,3,5,9), (3,2,1,2,1),\\
    &(3,9,4,6,6), (3,10,3,5,6), (4,3,2,3,2), (4,4,1,2,2), (4,x+2,x,x+2,x)_{x\geq 1},(4,2x,2,4,x)_{x\geq 1}\}
\end{align*}
we obtain $S_4\cup S_4^{\ast}\subseteq \mathbf{S}$. 

Finally notice that the solutions in the case $dv\leq 2$ are given by the set $S_4^{\ast}$. This completes all the cases and hence we have
\[
\mathbf{S} = \left(\bigcup_{i=1}^4 S_i\right)\cup \left(\bigcup_{i=1}^4 S_i^{\ast}\right).
\]

\section{Classification in the regular and the edge-transitive case}
\label{Section:Classification in regular and edge-transitive case}
The aim of this appendix is to prove the results of Section \ref{sec:classification results}. Note that in this appendix, as suggested by our stated goal, we have as a standing assumption that the base field $k$ has characteristic zero. Some of the claims do not need this assumption as it is mainly utilised when using \cite{QPA} to check whether certain higher preprojective algebras are selfinjective or to compute certain extension groups. 

Assume that $\Lambda=\K S_{(r,s)}/\I$ is a semi-regular $(r,s)$-star algebra of bidegree $(\Sigma_1,\Sigma_2)$. Then we know that the quiver $B_{\Lambda}$ and the unique indecomposable $B_{\Lambda}$ module $M$ with dimension vector $(1,\ldots,1)^T$ satisfy the conditions of Setup \ref{setup:graph, quiver, one-point extension} by Theorem \ref{thrm:twisted fractionally Calabi-Yau semi-regular star algebra}. Assume moreover that $\Lambda$ is $2$-representation-finite and balanced; recall that the only non-balanced $2$-representation-finite $(r,s)$-star algebra appears when $(r,s)=(1,1)$ by Proposition \ref{prop:star algebras which are 2-hereditary}. Then Corollary \ref{cor:2-RF semi-regular star algebra} gives that
\begin{equation}\label{eq:the conditions on sigmas and r s}
    r\Sigma_1 = s\Sigma_2 \quad\text{ and }\quad   \Sigma_1\Sigma_2+r+s-r\Sigma_1=p\in\{1,2,3,4\}.
\end{equation}
On the other hand, we know that $\Lambda$ is twisted fractionally Calabi--Yau by Theorem \ref{thm:twisted fractionally CY and n-RF}(a). Hence the algebra $\K\Gamma_{\Lambda}[M]$ is twisted fractionally Calabi--Yau by Lemma \ref{lem:twisted fractionally Calabi-Yau of trivial extension}. Therefore, we may apply Corollary \ref{cor:proposition about order of matrix holds for Gamma twisted fractionally Calabi-Yau} to the algebra $\Gamma_{\Lambda}$ and the graph $\graph{B_{\Lambda}}$. In particular, since by (\ref{eq:the conditions on sigmas and r s}) we have that $p$ is not equal to $0$, we obtain the following.

\begin{cor}\label{cor:all the possible cases}
    \begin{enumerate}
        \item[(a)] If $p=1$, then the order of $-\Phi_{\Gamma_{\Lambda}}$ is $6$.
        \item[(b)] If $p=2$, then the order of $-\Phi_{\Gamma_{\Lambda}}$ is $4$.
        \item[(c)] If $p=3$, then the order of $-\Phi_{\Gamma_{\Lambda}}$ is $3$.
        \item[(d)] If $p=4$, then $(r,\Sigma_1,s,\Sigma_2)=(4,2,4,2)$.
    \end{enumerate}
\end{cor}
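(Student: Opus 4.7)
The plan is to deduce Corollary \ref{cor:all the possible cases} as an essentially direct application of Corollary \ref{cor:proposition about order of matrix holds for Gamma twisted fractionally Calabi-Yau} to the algebra $\Gamma_{\Lambda}$. First I verify its hypotheses. Since $\Lambda$ is $2$-representation-finite, it is twisted fractionally Calabi--Yau by Theorem \ref{thm:twisted fractionally CY and n-RF}(a), and hence so is $\Gamma_{\Lambda}$ by Lemma \ref{lem:twisted fractionally Calabi-Yau of trivial extension}. Moreover, Theorem \ref{thrm:twisted fractionally Calabi-Yau semi-regular star algebra}(a) already tells us that the quadruple $(B_{\Lambda}, M, \Sigma_1, \Sigma_2)$ satisfies Setup \ref{setup:graph, quiver, one-point extension}, with $\lvert d_x\rvert^2 = r$ and $\lvert d_y\rvert^2 = s$. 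So all the input needed to invoke the referenced corollary is in place, and in particular, by Proposition \ref{prop:fractionally Calabi--Yau implies eigenvalues of Coxeter matrix are in unit circle} the order of $-\Phi_{\Gamma_{\Lambda}}$ is finite.

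For parts (a), (b) and (c), the statement is then immediate: applying the referenced corollary in the cases $p = 1, 2, 3$ reads off the order of $-\Phi_{\Gamma_{\Lambda}}$ as $6$, $4$ and $3$ respectively.

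For part (d) the argument is where the work lies. The referenced corollary will constrain the admissible parameters when $p = 4$ (this is where $-\Phi_{\Gamma_{\Lambda}}$ can have eigenvalues $\pm 1$ with potentially higher algebraic than geometric multiplicity, so finite order forces extra conditions). The plan is to combine this with Proposition \ref{prop:star algebras which are 2-hereditary}(b), which gives $r, s \geq 4$ and $2 \leq \Sigma_1 \leq s-2$, $2 \leq \Sigma_2 \leq r-2$, together with the enumeration in Appendix \ref{Section:Solutions to the system of linear Diophantine equations} of all solutions to the Diophantine system with $p = 4$. Running through $\bigcup_{i=1}^{4}(S_i \cup S_i^{\ast})$ and keeping only quintuples of the form $(4, r, \Sigma_1, s, \Sigma_2)$ satisfying the balanced bounds produces a short list of candidate families; imposing in addition the finite-order conclusion from the referenced corollary rules out all but $(r, \Sigma_1, s, \Sigma_2) = (4, 2, 4, 2)$.

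The main obstacle is part (d): even after the balanced restrictions, the Diophantine system admits infinite families of solutions for $p = 4$ (for instance $(4, 4, x, 2x, 2)_{x \geq 2}$ and $(4, x+2, 2, x+2, 2)_{x \geq 2}$ from $S_3$, and their duals from $S_3^{\ast}$ and $S_4$, $S_4^{\ast}$), so the whole point of the proof is that the extra input of $\Gamma_{\Lambda}$ being twisted fractionally Calabi--Yau (and hence $-\Phi_{\Gamma_{\Lambda}}$ having finite order) collapses this infinite list to a single quadruple. Verifying this collapse carefully, by going down the Diophantine families and checking against the finite-order criterion supplied by the referenced corollary, is the technical core of the proof.
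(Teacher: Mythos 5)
For parts (a)--(c) your argument coincides with the paper's: the corollary is obtained by verifying the hypotheses of Corollary \ref{cor:proposition about order of matrix holds for Gamma twisted fractionally Calabi-Yau} (the Setup via Theorem \ref{thrm:twisted fractionally Calabi-Yau semi-regular star algebra}(a); twisted fractional Calabi--Yau-ness of $\Gamma_{\Lambda}$ via Theorem \ref{thm:twisted fractionally CY and n-RF}(a) and Lemma \ref{lem:twisted fractionally Calabi-Yau of trivial extension}; and $p\neq 0$ from Corollary \ref{cor:2-RF semi-regular star algebra}(d)) and then reading off the conclusion. One imprecision in your setup: Proposition \ref{prop:fractionally Calabi--Yau implies eigenvalues of Coxeter matrix are in unit circle} only asserts that the eigenvalues lie on the unit circle, which by itself does not give finite order (Jordan blocks are the obstruction). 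Finiteness of the order of $-\Phi_{\Gamma_{\Lambda}}$ comes from $\nu^{l}\isom(-)_{\phi}\comp[m]$ inducing, up to sign, a permutation matrix on the Grothendieck group, i.e.\ from the argument inside the proof of that proposition rather than from its stated conclusion.

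The genuine gap is in part (d). In the paper, the implication ``$p=4\Rightarrow(r,\Sigma_1,s,\Sigma_2)=(4,2,4,2)$'' is itself part of the conclusion of the cited result from Section \ref{Section:One-point extensions of bipartite hereditary algebras} --- this is exactly how Proposition \ref{prop: p and the order of the Coxeter matrix when bi-eigenvector} is later invoked in the regular $p=4$ case --- so nothing further is needed there. You instead propose to re-derive (d) by intersecting the $p=4$ Diophantine families of Appendix \ref{Section:Solutions to the system of linear Diophantine equations} with the balanced bounds and then ``imposing the finite-order conclusion.'' You correctly observe that the balanced bounds alone do not suffice: the families $(x+2,2,x+2,2)$, $(x+2,x,x+2,x)$, $(4,x,2x,2)$ and $(2x,2,4,x)$ for $x\geq 2$, together with their duals, all survive. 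But you never state what the finite-order criterion actually says for $p=4$, nor do you check it against any of these families; you only assert that it ``rules out all but $(4,2,4,2)$.'' Since after the Diophantine and balancedness reductions the entire content of (d) resides in that one check, asserting it is not a proof. Either cite the $p=4$ clause of Corollary \ref{cor:proposition about order of matrix holds for Gamma twisted fractionally Calabi-Yau} directly, as the paper does, or make the eigenvalue condition at $\mu=\pm 1$ explicit and eliminate the remaining infinite families one by one.
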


Based on our claims in Section \ref{sec:classification results} we are interested in two specific cases: in the case where $B_{\Lambda}$ is regular and in the case where $B_{\Lambda}$ is edge-transitive. We start with the regular case. Our strategy for the classification is to go through the cases $p=1$, $p=2$, $p=3$ and $p=4$ separately in each of the regular and edge-transitive cases. 

\subsection{Regular case}

In this section, our goal is to present a classification of quadratic monomial $2$-representation-finite algebras $\Lambda$ in characteristic zero with associated graphs $\graph{B_{\Lambda}}$ being regular. Recall that by Corollary \ref{cor:the graph B_Lambda is reflexive and almost always Salem} the graph $\graph{B_{\Lambda}}$ is reflexive. Regular bipartite graphs which are also reflexive have been classified by Koledin and Stani\' c in \cite{KS14}. 
They show that any regular bipartite graph of degree at most $2$ is reflexive, and so is its bipartite complement. 
Moreover, if the degree of the graph exceeds $3$, there are only finitely many such graphs which are reflexive: in fact only $70$ cases need be considered, namely those in the set $\mathcal{R}^*$ defined in \cite{KS14}. 
Consulting our list of solutions, we see that we need only consider regular graphs with $18$ or fewer vertices.

\subsection*{Solutions with \texorpdfstring{$p=1$}{p=1}}
In this case, the only possible regular solutions are the Koszul dual pair $(8,3,8,3)$ and $(8,5,8,5)$. Hence, if we consider the former, we need to look at Tables 2 and 3 of \cite{KS13}, of which only $I_2$, $J_6$ and $J_7$ have the right number of vertices. 
The Coxeter matrices associated to the latter two are not of finite order, and hence they cannot correspond to examples. 

For $I_2$, however, the matrix does have finite order and satisfies $\Ext^1_A(D\Lambda,\Lambda) = 0$. 
Note that $I_2$ is also known as the M\"obius-Kantor graph. Using QPA, we can check that the preprojective algebra corresponding to $I_2$ is not finite dimensional. 
Alternatively, we can check that $\Ext^1_\Lambda(\tau_2 D\Lambda,\Lambda) \neq 0$. 

\subsection*{Solutions with \texorpdfstring{$p=2$}{p=2}}
Note that here we find the solution $(1,1,1,1)$ which corresponds to $A_3$ modulo the radical square. 
This is the first of the two examples in the planar case shown in \cite{ST24}. 
In fact, it is straightforward to check that this is the only possible quadratic monomial $2$-representation-finite algebra corresponding to this solution and that $B_{\Lambda}$ is the bipartite complement of the graph $P_2$. This gives the example (\ref{eq:the A3 example}).

Now, aside from the solution $(1,1,1,1)$, we only have the Koszul dual pair $(7,3,7,3)$ and $(7,4,7,4)$. 
Since the latter does not lie in $\mathcal{R}^*$, we must instead consult Theorem 8 and Tables 2 and 3 of \cite{KS13}.
The only options are the Heawood graph and $J_4$ and $J_5$ of Table 3. 
However, for $J_5$ one can check using QPA that $\Ext^1_{\Lambda}(\tau_2(D\Lambda), \Lambda) \neq 0$, or, alternatively, that its preprojective algebra is not finite-dimensional. 
Moreover, the Coxeter matrix of the algebra defined by $J_4$ does not have finite order. 

Using QPA, we can see that the algebras defined in (\ref{eq:the Heawood graph example}) and (\ref{eq:the bipartite complement of the Heawood graph example}) are $2$-representation-finite by checking that their preprojective algebras are selfinjective.

\subsection*{Solutions with \texorpdfstring{$p=3$}{p=3}}
By \cite[Lemma 4.24]{ST24}, the solution $(2,1,2,1)$ must have $\Ext^1_{\Lambda}(D\Lambda,\Lambda) \neq 0$.
Hence, we thus only need to check the Koszul self-dual solution $(6,3,6,3)$. 
This can only correspond to the graph $I_1$ from Table 2 of \cite{KS13}. While the algebra associated to this has a Coxeter matrix of finite order, one can check that $\Ext^1_{\Lambda}(D\Lambda,\Lambda) \neq 0$.

\subsection*{Solutions with \texorpdfstring{$p=4$}{p=4}}
While $p = 4$ contains an infinite family of solutions, by Proposition \ref{prop: p and the order of the Coxeter matrix when bi-eigenvector}, only the solution $(4,2,4,2)$ can correspond to a $2$-representation-finite algebra. 
For these parameters, one can check that the only possible graph is the cycle graph $C_8$. 
This corresponds to the second of the examples in the planar case shown in \cite{ST24} and to (\ref{eq:the C_8 example}) in this article.
\newline

\noindent By collecting all of the cases above, we see that we have proved Proposition \ref{prop: classification in the regular case}.

\subsection{Edge-transitive case}

Now assume that $\graph{B_{\Lambda}}$ is edge-transitive. Since $\graph{B_{\Lambda}}$ is a bipartite graph, this implies that there are at most two orbits of vertices for such a graph $G$ under the action of its automorphism group. Hence we deduce that $\graph{B_{\Lambda}}$ must be either semi-regular or regular. 
Using known classifications of edge-transitive graphs on up to $47$ vertices, we find that up to Koszul duality, there are only two graphs yielding $2$-representation-finite algebras: 
in the list available at Encyclopedia of Graphs, these are the graphs G-9P731 and G-9P730, with parameters corresponding to the solutions $(5,4,10,2)$ and $(6,3,9,2)$. 

In the classification of edge-transitive graphs at Encyclopedia of Graphs, for each solution $(r, \Sigma_1, s, \Sigma_2)$, we search for connected bipartite graphs with $r + s$ vertices and $r\Sigma_1$ edges. 
For each solution, we indicate some condition it fails to satisfy if the associated algebra is not $2$-representation-finite, and otherwise we indicate that QPA verifies that the higher preprojective algebra is selfinjective. 

\subsection*{Solutions with \texorpdfstring{$p=1$}{p=1}}
\begin{itemize}
    \item $(5,8,20,2)$: The only graphs matching these parameters are G-9PB76 and G-9PB77, and both of these have at least two vertices that have the same neighbours. Hence, by \cite{ST24}, the associated algebras have non-zero $\Ext^1_{\Lambda}(D\Lambda, \Lambda)$.
    \item $(6,5,10,3)$: Only G-9P77R matches these parameters, and the associated algebra has a Coxeter matrix of finite order, satisfies $\Ext^1_{\Lambda}(D\Lambda, \Lambda) = 0$, but has $\Ext^1_{\Lambda}(\tau_2 D\Lambda, \Lambda) \neq 0$.
    \item $(6,5,10,3)$: Only G-9P8H9 matches these parameters, but the Coxeter matrix of its associated algebra is not of finite order.
    \item $(7,4,14,2)$: Only G-9P8H6 matches these parameters, but it has at least two vertices that have the same neighbours. Hence, by \cite{ST24}, the associated algebra satisfies $\Ext^1_{\Lambda}(D\Lambda, \Lambda)\neq 0$. 
    \item $(10,3,15,2)$: Only G-9PB72 matches these parameters, but the Coxeter matrix of its associated algebra is not of finite order.
\end{itemize}

\subsection*{Solutions with \texorpdfstring{$p=2$}{p=2}}
\begin{itemize}
    \item $(5,6,15,2)$: The graphs G-9 and G-9P85G are the only ones that match these parameters, but the former is regular and hence we know it cannot yield an example by our classification in the regular case. Considering the latter, it is easy to see that at least two vertices have the same neighbours, and as before, we deduce that the associated algebra cannot be $2$-representation-finite. 
    \item $(6,4,8,3)$: Only the graphs G-9P6Z6, G-9P6Z7 and G-2HJMC match the parameters, but of these, all but G-9P6Z6 have at least two vertices that have the same neighbours. G-9P6Z6 has associated algebra satisfying $\Ext^1_{\Lambda}(D\Lambda, \Lambda) = 0$, but its Coxeter matrix is not of finite order. 
    \item $(6,4,12,2)$: Only G-9P7N0 and G-9P7N1 match the parameters here. The latter can be excluded as it has at least two vertices that have the same neighbours. The former has associated algebra satisfying $\Ext^1_{\Lambda}(D\Lambda, \Lambda) = 0$, but its Coxeter matrix is not of finite order.
    \item $(8,3,12,2)$: The only graph to consider in this case is G-9P85F, and for the associated algebra, we have both that $\Ext^1_{\Lambda}(D\Lambda, \Lambda) \neq 0$ and that its Coxeter matrix is not of finite order. 
\end{itemize}

\subsection*{Solutions with \texorpdfstring{$p=3$}{p=3}}
\begin{itemize}
    \item $(5,4,10,2)$: Here, only G-9P731 satisfies the parameters and QPA verifies that its higher preprojective algebra is selfinjective. This corresponds to Example \ref{eq:the graph G-9P731 example} and the bipartite complement corresponds to Example \ref{eq:the bipartite complement of the graph G-9P731 example} via Koszul duality.
    \item $(6,3,9,2)$: In this case, only G-9P730 satisfies the parameters and QPA verifies that its higher preprojective algebra is selfinjective. This corresponds to Example \ref{eq:the graph G-9P730 example} and the bipartite complement corresponds to Example \ref{eq:the bipartite complement of the graph G-9P730 example} via Koszul duality. 
\end{itemize}

\noindent By collecting all of the cases above, we see that we have proved Proposition \ref{prop: classification in the edge-transitive case}.

\bibliography{bib.bib}
\bibliographystyle{alpha} 

\end{document}